\DeclareMathOperator{\tr}{tr}
\DeclareMathOperator{\Adj}{Adj}
\theoremstyle{thmstyleone}
\newtheorem{theorem}{Theorem}
\newtheorem{proposition}{Proposition}
\theoremstyle{thmstyletwo}
\newtheorem{remark}{Remark}
\newtheorem{lemma}{Lemma}
\newtheorem{claim}{Claim}
\theoremstyle{thmstylethree}
\newtheorem{definition}{Definition}
\DeclarePairedDelimiter\abs{\lvert}{\rvert}
\begin{document}

\title[Article Title]{On twisted Koecher-Maass series and its integral Kernel}

\author[1,2]{\fnm{Fernando} \sur{Herrera}}
\affil[1]{\orgdiv{Facultad de Ciencias}, \orgname{Universidad Arturo Prat}, \orgaddress{\street{Av. Arturo Prat 2120}, \city{Iquique}, \postcode{1100000}, \country{Chile}}}
\affil[2]{\orgdiv{Instituto de Ciencias Exactas y Naturales}, \orgname{Universidad Arturo Prat}, \orgaddress{\street{Playa Brava 3256}, \city{Iquique}, \postcode{1111346}, \country{Chile}}}

\abstract{We establish the integral kernel associated with the Koecher-Maass series of degree three twisted by an Eisenstein series. We prove that such a kernel admits an analytic continuation and determine its functional equations. We find a second representation of this kernel using Poincaré series. As an application, we give another proof of the analytic properties for the twisted Koecher-Maass series. Furthermore, we generalize a result due to Siegel related to the Lipschitz summation formula on the Siegel space of degree three.}

\keywords{Siegel cusp forms, Twisted Koecher–Maass series, Multiple Dirichlet series, Lipschitz summation formula}

\pacs[2020 Mathematics Subject Classification]{11F68 $\cdot$ 11F46 $\cdot$ 11M32}

\maketitle
\vspace{-1cm}
{\small\faEnvelopeO}~ferhercon@gmail.com
\vspace{0.4cm}
\section{Introduction}

Let $f$ be a Siegel cusp form of even integral weight $k$ over the Siegel modular group $Sp_3(\mathbb{Z})$ and
\vspace{0.1cm}
\begin{equation}\label{fourier}
f(Z)=\sum_{T \in \mathcal{J}} A_T\,e^{2\pi i \tr(TZ)}
\vspace{0.15cm}
\end{equation}
its Fourier series expansion, where $\mathcal{J}:=\mathcal{J}_3$ is the set of half-integral, positive-definite, 3 by 3 matrices, $\{A_T\}_{T \in \mathcal{J}}\subset \mathbb{C}$ the set of its Fourier coefficients and $\tr$ denotes the trace of a square matrix. Let
\vspace{-0.15cm}
\begin{equation}\label{KSdirichlet}
D_f(s)=\sum_{T \in \mathcal{J}/SL_3(\mathbb{Z})} \frac{1}{\varepsilon_T}\frac{A_T}{(\det T)^{s}}
\vspace{-0.1cm}
\end{equation}
be the Koecher-Maass series of degree three associated with $f$, where $s \in \mathbb{C}$, $\Re(s)>2+k/2$, the action of $SL_3(\mathbb{Z})$ over $\mathcal{J}$ is $T[U]:={}^t\hspace{-0.02cm}UTU$ with ${}^t\hspace{-0.02cm}U$ the transpose of $U$ and $\varepsilon_T:=\#\{g \in SL_3(\mathbb{Z})/ T[g]=T\}$. This series is a generalization of the L-function associated with the elliptic cusp forms (i.e. cusp forms over $SL_2({\mathbb{Z}})=Sp_1(\mathbb{Z})$). The series
\vspace{0.2cm}
\begin{equation*}
    D^*_f(s):=(2\pi)^{-3s}\Gamma(s)\Gamma(s-1/2)\Gamma(s-1)D_f(s)
\vspace{0.25cm}
\end{equation*}
has holomorphic continuation to $\mathbb{C}$ and satisfies $D^*_f(k-s)=(-1)^{k/2}D^*_f(s)$, these properties were proven via a generalized Mellin transform; the same analytic properties for the twisted case by Hecke characters, also known as Grossencharacters, were established through invariant differential operators (see \cite{Maa}).

The finite-dimensional complex vector space of weight $k$ Siegel cusp forms over $Sp_3(\mathbb{Z})$, denoted by $\mathfrak{S}_{3,k}$, is equipped with the Petersson inner product. For a fixed complex $s$ with $\Re(s)>2+k/2$, the map $f \mapsto D_f(s)$ is a linear functional of $\mathfrak{S}_{3,k}$; then, there exists $\Omega_{k,s} \in \mathfrak{S}_{3,k}$ such that $D_f(s)=\langle f,\Omega_{k,s} \rangle$ for all $f \in \mathfrak{S}_{3,k}$. The cusp form $\Omega_{k,s}$ is called the integral kernel associated with (\ref{KSdirichlet}).

In \cite{Coh}, Cohen investigates a generating set for elliptic cusp forms, which is composed of the kernels of the periods. Explicit kernels assume a significant role in multiple works, such as \cite{Gol, Gro, Shi, Stu}. The study by Diamantis and O'Sullivan \cite{Dia} focuses on the Cohen kernel for complex values of $s$. In \cite{Koh}, Kohnen and Sengupta study the Koecher Maass series over $Sp_n(\mathbb{Z})$ for any positive integer $n$; the analytic continuation and functional equation of such a series were proven using the related kernel.

In \cite{Mar2}, Martin considers the Koecher-Maass series of Siegel cusp forms of degree two twisted by Epstein zeta function. 
This series can be written in terms of certain Selberg's Eisenstein series $E(Y|s,w)$ in two complex variables $s,w$, namely
\vspace{-0.1cm}
\begin{equation}\label{KMMartin}
D_f(s,w)=\sum_{T \in \mathcal{J}_2/SL_2(\mathbb{Z})}\frac{A_T}{\varepsilon_T} E(T|s,w).
\vspace{-0.1cm}
\end{equation}
In such article Martin studies the integral kernel $\Omega_{k,s,w} \in \mathfrak{S}_{2,k}$, which satisfies (up to an explicit constant $*$) the identity $D_f(s,w)=* \langle f, \Omega_{k,s,w} \rangle$ for all $f \in \mathfrak{S}_{2,k}.$

Our main purpose in this paper is to extend the results in \cite{Mar2} to $\mathfrak{S}_{3,k}$. More precisely, if $\{A_T\}_{T \in \mathcal{J}}$ is the Fourier coefficients of an arbitrary Siegel cusp form $f \in \mathfrak{S}_{3,k}$ as in (\ref{fourier}), its Koecher-Maass series twisted by Selberg's Eisenstein series in three complex variables $E(Y|s,w,u)$ is the following Dirichlet series
\vspace{-0.1cm}
\begin{equation}\label{KMFernando}
D_f(s,w,u)=\sum_{T \in \mathcal{J}/SL_3(\mathbb{Z})}\frac{A_T}{\varepsilon_T} E(T|s,w,u),
\vspace{-0.1cm}
\end{equation}
and for fixed complex numbers $s,w,u$ the map $f \mapsto D_f(s,w,u)$ is a linear functional of $\mathfrak{S}_{3,k}$. In this work we introduce and study a Siegel cusp form ${\Omega}_{k,s,w,u}$, of weight $k$ and over $Sp_3(\mathbb{Z})$, which satisfies (up to an explicit constant $*$) the identity $D_f(s,w,u) =* \langle f, {\Omega}_{k,s,w,u} \rangle$ for all $f \in \mathfrak{S}_{3,k}$.

\subsection{Statement of Results}\label{statements}

Let $\mathcal{P}_n$ be the cone of symmetric, positive-definite matrices in $\mathbb{R}^{n,n}$ and $\mathcal{H}_n$ be the Siegel space of degree $n$, that is $\mathcal{H}_n=\{Z \in \mathbb{C}^{n,n}/\, {}^{t}Z=Z, \Im(Z) \in \mathcal{P}_n\}$. This is a generalization of the complex upper half-plane $\mathcal{H}=\mathcal{H}_1=\{\tau \in \mathbb{C} / \Im(\tau)>0 \}$.

Although some objects are defined for all positive integer $n$, we only consider the cases $n=2,3$ in this work. An important object in this article is the power function
\vspace{-0.05cm}
\begin{equation}\label{potencia}
     \begin{array}{cccl}
 p:&\mathbb{C}^3 \times \mathcal{H}_3 & \longrightarrow & \mathbb{C} \\
 &(s,w,u,Z) & \longmapsto & p_{s,w,u}(Z):=e^{s\,h_1(Z_1)} \,e^{w\,h_2(Z_2)}\,e^{u\,h_3(Z)},
 \end{array}
 \vspace{-0.05cm}
 \end{equation}
where $Z_j \in \mathcal{H}_j$ is the $j\times j$ upper left-hand corner in $Z$ and $h_j$ is certain function which satisfies $e^{h_j(Z_j)}=\det Z_j$ for any $j=1,2,3$ (see \cite[p.~150]{Koh}). The set $\mathcal{S}=\{X \in \mathbb{R}^{3,3}/\, {}^{t}X=X \}$ is a real vector space with inner product $\tr(X\, {}^{t}X)$;  the subsets
\vspace{-0.05cm}
\begin{equation*}
\mathscr{L}=\left\{\left(\begin{smallmatrix}
     a&b/2&c/2\\
     b/2&d&e/2\\
     c/2&e/2&f
     \end{smallmatrix}\right)/ a,b,c,d,e,f \in \mathbb{Z} \right\} \text{ and } \mathscr{L^*}=\left\{ \left(\begin{smallmatrix}
     a&b&c\\
     b&d&e\\
     c&e&f
     \end{smallmatrix}\right) / a,b,c,d,e,f \in \mathbb{Z} \right\}
\vspace{-0.05cm}
\end{equation*}
are lattices in $\mathcal{S}$, dual to each other. The classical Lipschitz summation formula is
\begin{equation}\label{LSForiginal}
    \sum_{n\in \mathbb{Z}} (\tau+n)^{-s}=\frac{(-2\pi i)^s}{\Gamma(s)}\sum_{n\in \mathbb{N}} n^{s-1}e^{2\pi i n\tau },
\vspace{-0.05cm}
\end{equation}
where $\tau \in \mathcal{H}$ and $\Re(s)>1$. Our first proposition generalizes (\ref{LSForiginal}) and is a partial generalization of Siegel’s statement Hilfssatz~38 in \cite[p.~586]{Sie}. Let $W=\left(\begin{smallmatrix}
     0&0&1\\
     0&1&0\\
     1&0&0\\
 \end{smallmatrix}\right).$

\vspace{0.2cm}
\begin{proposition}\label{lipschitz}
Let $(s,w,u,Z) \in \mathbb{C}^3 \times \mathcal{H}_3$ with $\Re(s)>1$, $\Re(w)>3$, $\Re(u)>4.$ Then
\vspace{-0.05cm}
\begin{equation*}
\begin{aligned}
\sum_{B \in \mathscr{L}^*} p_{-s,-w,-u}(Z+B)=-\frac{1}{\pi^{3/2}}&\frac{(-2\pi i)^{s+2w+3u}e^{-(s+2w+3u)\pi i/2}}{\Gamma(s+w+u-1)\Gamma(w+u-1/2)\Gamma(u)}\\[-0.2em] 
&\times \sum_{T\in \mathscr{L}\cap \mathcal{P}_3}p_{-w,-s,s+w+u-2}(iWTW) \, e^{2\pi i \tr(TZ)}.
\end{aligned}
\vspace{-0.05cm}
\end{equation*}
\end{proposition}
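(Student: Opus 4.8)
The plan is to read the identity as the Fourier expansion of the left‑hand side in the real part of $Z$, so that its content is a matrix Poisson summation together with the explicit evaluation of one Siegel‑type integral. Write $Z=X+iY$ with $X\in\mathcal S$ and $Y=\Im Z\in\mathcal P_3$ held fixed. For $\Re(s)>1$, $\Re(w)>3$, $\Re(u)>4$ the series $\sum_{B\in\mathscr L^*}p_{-s,-w,-u}(Z+B)$ converges absolutely and is invariant under $X\mapsto X+\mathscr L^*$; since $\tr(TB)\in\mathbb Z$ for all $T\in\mathscr L$ and $B\in\mathscr L^*$ (a direct entrywise check, which is also why $\mathscr L$ and $\mathscr L^*$ are dual for the form $\tr(XY)$), it admits a Fourier expansion indexed by $T\in\mathscr L$. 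Because $\mathscr L^*$ is the standard integer lattice $\mathbb Z^6$ in the coordinates $(x_{11},x_{12},x_{13},x_{22},x_{23},x_{33})$, its covolume for $dX=\prod_{i\le j}dx_{ij}$ is $1$, and unfolding the integral over a fundamental domain against the sum over $B$ collapses it to an integral over all of $\mathcal S$. Hence the $T$‑th Fourier coefficient is
\[
c_T(Y)=\int_{\mathcal S}p_{-s,-w,-u}(X+iY)\,e^{-2\pi i\tr(TX)}\,dX,
\]
and, since $e^{2\pi i\tr(TZ)}=e^{2\pi i\tr(TX)}e^{-2\pi\tr(TY)}$, the proposition is equivalent to showing $c_T(Y)$ equals the stated constant times $p_{-w,-s,s+w+u-2}(iWTW)\,e^{-2\pi\tr(TY)}$ for $T\in\mathcal P_3$ and vanishes otherwise.

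To evaluate $c_T(Y)$ I would peel off the last row and column. The Schur identity $\det Z_3=\det Z_2\cdot\zeta_3$, with $\zeta_3=z_{33}-{}^{t}z\,Z_2^{-1}z$ and $z=(z_{13},z_{23})^{t}$, gives $p_{-s,-w,-u}(Z)=(\det Z_1)^{-s}(\det Z_2)^{-w-u}\zeta_3^{-u}$. Integrating first over the diagonal variable $x_{33}$, after the change of variable to $\zeta_3$ this is exactly the one‑dimensional Fourier transform underlying (\ref{LSForiginal}): it produces the factor $(-2\pi i)^{u}/\Gamma(u)$ times a power of the effective frequency and an exponential, and is nonzero only when that frequency is positive. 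Integrating next over the off‑diagonal variables $x_{13},x_{23}$ against the weight thrown off by the previous step gives a convergent Gaussian integral---convergent because $\Im(Z_2^{-1})$ is negative definite, so the relevant quadratic form in $(x_{13},x_{23})$ is positive definite---whose value is a determinantal power. This removes one level and leaves a degree‑two integral of the same shape (with shifted exponents), to which the identical reduction applies; this intermediate case matches Martin's degree‑two computation, and the base case is (\ref{LSForiginal}) itself. The successive positivity constraints coming from the three one‑dimensional factors are collectively equivalent to $T\in\mathcal P_3$; conceptually this is the Paley--Wiener support condition for the tube $\mathcal H_3$ over the self‑dual cone $\mathcal P_3$.

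Iterating three times multiplies the one‑dimensional factors attached to $\alpha_1=s+w+u$, $\alpha_2=w+u$, $\alpha_3=u$, accounting for the total power $(-2\pi i)^{s+2w+3u}$ and, after collecting the branch‑correcting arguments, for the phase $e^{-(s+2w+3u)\pi i/2}$ and the overall sign. The three Gaussian determinants together with a $\pi^{1/2}$ from each Gaussian integral assemble into the factor $\pi^{-3/2}$ and the shifted Gamma product $\Gamma(s+w+u-1)\Gamma(w+u-1/2)\Gamma(u)$, which is precisely Gindikin's Gamma function $\Gamma_{\mathcal P_3}$ of the cone (the shifts $1,\tfrac12,0$ being the values $(3-j)/2$). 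The structural point is that Fourier inversion reverses the nesting order of the principal minors---the $j$‑th corner of the transform is controlled by the $(3-j)$‑th minor of $T$---which is realized analytically by conjugating $T$ with the anti‑diagonal involution $W$; tracking the exponents through this reversal and the shift dictated by the invariant measure on the cone yields exactly the dual power function $p_{-w,-s,s+w+u-2}(iWTW)$.

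I expect the main obstacle to be the bookkeeping rather than any single hard estimate. Concretely: justifying the unfolding and the interchange of the three iterated integrals (which is exactly what $\Re(s)>1$, $\Re(w)>3$, $\Re(u)>4$ secure); checking at each level that the Schur complement stays in the upper half‑plane and that the quadratic form is positive definite, so every Gaussian integral converges to the claimed determinantal power; and---most delicately---keeping the branches of the complex powers $h_j$ consistent across the three reductions, so that the argument factors $e^{-(\cdots)\pi i/2}$ and the overall sign combine into precisely the constant displayed in the statement.
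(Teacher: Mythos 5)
Your proposal is correct in strategy, but it runs the Poisson-summation identity in the opposite direction from the paper, so the two proofs are genuinely different computations. The paper (following Martin's degree-two argument) applies Poisson summation to the \emph{cone-supported} function $\phi(X)=p_{-w,-s,s+w+u-2}(iX)\,e(XZ)$, $\phi\equiv 0$ off $\mathcal{P}_3$, whose Fourier transform is assembled from two prepared results: Lemma~\ref{int}, the Laplace-type integral of the power function over the cone (evaluated via Cholesky decomposition into the three one-dimensional pieces $L_1,L_2,L_3$), and Claim~\ref{claim1}, the identity $p_{s_1,s_2,s_3}(-Z^{-1})=e^{(s_1+2s_2+3s_3)\pi i}\,p_{s_2,s_1,-s_1-s_2-s_3}(Z[W])$, which concentrates all branch bookkeeping and produces the $W$-reversal algebraically. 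You instead expand the periodized left-hand side in a Fourier series and evaluate its coefficients directly, i.e.\ you compute the Fourier transform of $p_{-s,-w,-u}(Z+\cdot)$ over all of $\mathcal{S}$ by Schur-complement peeling, each layer being a one-sided one-dimensional Lipschitz transform plus a Gaussian integral. By Fourier inversion the two computations are equivalent, but what is input and what is output differ: in your route the support condition $T\in\mathcal{P}_3$ is \emph{derived} (the three one-sided supports are Sylvester's criterion for the reversed nested minors, which is also exactly where $iWTW$ comes from), and the argument is close in spirit to Siegel's original Hilfssatz~38, which the paper cites only as the statement being generalized; in the paper's route the cone restriction is imposed by the definition of $\phi$ and transferred by Poisson summation. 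What the paper's modularization buys is that the phase/branch consistency of $h_1,h_2,h_3$ --- which you rightly flag as the main obstacle and defer --- is confined to Claim~\ref{claim1} and the identities inside Lemma~\ref{int}; your route must instead thread those branches through three successive reductions, and it also needs the (routine) smoothness/decay argument guaranteeing that the Fourier expansion converges pointwise to the periodized sum. I see no gap of principle in your sketch; the deferred bookkeeping is comparable in weight to what the paper itself omits by citing Martin.
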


The integral kernel associated with the Dirichlet series (\ref{KMMartin}) obtained by Martin is 
\begin{equation*}
\Omega_{k,s,w}(Z)=\sum_{M \in G_{0,2} \backslash Sp_2(\mathbb{Z})} q_{-s,-w}(Z)|_k[M],
\vspace{-0.05cm}
\end{equation*}
where $G_{0,2}$ is certain subgroup of $Sp_2(\mathbb{Z})$, $|_k[\;\;]$ the standard action of $Sp_2(\mathbb{Z})$ in the theory of Siegel modular forms, and
\vspace{-0.1cm}
\begin{equation*}
q_{s,w}(Z)=e^{s\log(\tau_1)}e^{w\log(-\det Z)}
\vspace{-0.1cm}
\end{equation*}
with $Z=\left(\begin{smallmatrix}
     \tau_1&z\\
     z&\tau_2\\
 \end{smallmatrix}\right) \in \mathcal{H}_2$ and $\log$ the principal branch of the logarithm. In this work we establish that the desired generalization, that is the integral kernel associated with the three variables Dirichlet series (\ref{KMFernando}), is
 \vspace{-0.1cm}
\begin{equation}\label{defikernel}
{\Omega}_{k,s,w,u}(Z)=\sum_{M \in G \backslash Sp_3(\mathbb{Z})}p_{-s,-w,-u}(Z)|_k[M]
\vspace{-0.05cm}
\end{equation}
where $G$ is a subgroup of $Sp_3(\mathbb{Z})$ described prior to Definition~\ref{defiintegral}. In Lemma~\ref{AH3} we prove that (\ref{defikernel}) defines a holomorphic function on $A \times \mathcal{H}_3$, where $A=\{(s,w,u) \in \mathbb{C}^3 / \Re(s)>1, \Re(w)>1, \Re(u)>4, \Re{(2s + 4w + u)}<k-4\}$. From now on until the end of this article, the function ${\Omega}_{k,s,w,u}(Z)$ will be called the kernel.

An important family in $\mathfrak{S}_{3,k}$ is the set of Poincaré series $\{P_{k,T} / \ T \in \mathcal{J}\}$ defined in Subsection \ref{cusp}. Our second proposition establishes that the kernel can be written in terms of Poincare series on $B \times \mathcal{H}_3 \subseteq A \times \mathcal{H}_3$, $B=\{(s,w,u) \in \mathbb{C}^3 / \Re(s)>1, \Re(w)>3, \Re(u)>4, \Re{(2s+4w+u)}<k-4\}$, providing us another representation thereof.

\vspace{0.2cm}
\begin{proposition}\label{newrepresentation}
Let $k>22$ and $(s,w,u,Z)\in B \times \mathcal{H}_3$. Then
\vspace{0.03cm}
\begin{equation*}
\begin{aligned}
{\Omega}_{k,s,w,u}(Z)=\frac{2}{\pi^{3/2}}&\frac{(-2\pi i)^{s+2w+3u}}{\Gamma(s+w+u-1)\Gamma(w+u-1/2) \Gamma(u)}\\
& \;\;\;\;\;\;\;\;\;\;\;\;\;\;\;\; \times \sum_{T\in \mathcal{J}/GL_3(\mathbb{Z})} \frac{1}{\varepsilon_T}E(T|w,s,-s-w-u+2)\,P_{k,T}(Z),
\end{aligned}
\vspace{-0.1cm}
\end{equation*}
where $E(Y|s,w,u)$ is Selberg's Eisenstein series defined in Subsection \ref{defSelberg}.
\end{proposition}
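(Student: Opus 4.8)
I plan to start from the definition (\ref{defikernel}), strip the translation subgroup out of the coset sum so that the inner sum becomes exactly the left-hand side of Proposition~\ref{lipschitz}, apply that proposition to pass to the Fourier side, and finally reassemble the resulting series into the Poincaré/Eisenstein form. A first useful observation is that the hypothesis region $B$ is precisely the domain on which Proposition~\ref{lipschitz} is valid: the strengthening $\Re(w)>3$ (relative to the region $A$ of Lemma~\ref{AH3}) is exactly what Proposition~\ref{lipschitz} requires, which is why Proposition~\ref{newrepresentation} is stated over $B$. Since $p_{-s,-w,-u}$ is built from the leading principal minors $\det Z_j$, it is fixed by the upper-triangular Levi part of $Sp_3(\mathbb{Z})$ (the diagonal entries of an upper-triangular element of $SL_3(\mathbb{Z})$ are $\pm1$, so each minor changes only by a square) but not by the translation subgroup $\Gamma_0=\{\left(\begin{smallmatrix}I&B\\0&I\end{smallmatrix}\right):B\in\mathscr{L}^*\}$, because $Z\mapsto Z+B$ alters the minors; hence $\Gamma_0\cap G=\{1\}$ and representatives for $G\backslash Sp_3(\mathbb{Z})$ can be chosen compatibly with $\Gamma_0$. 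Factoring the coset sum through the translations (each of which carries unit automorphy factor and acts by $Z\mapsto Z+B$) yields
\[ {\Omega}_{k,s,w,u}(Z)=\sum_{\eta}\Bigl(\sum_{B\in\mathscr{L}^*}p_{-s,-w,-u}(Z+B)\Bigr)\big|_k[\eta], \]
where $\eta$ runs over the residual quotient, and the bracketed inner sum is exactly the left-hand side of Proposition~\ref{lipschitz}.

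Next I would apply Proposition~\ref{lipschitz} to the inner sum, pulling the $Z$-independent constant (the three Gamma factors together with the phase $e^{-(s+2w+3u)\pi i/2}$ and the leading sign) out in front, and interchange the order of summation. This gives
\[ {\Omega}_{k,s,w,u}(Z)=C\sum_{T\in\mathscr{L}\cap\mathcal{P}_3}p_{-w,-s,s+w+u-2}(iWTW)\,\sum_{\eta}e^{2\pi i\,\tr(TZ)}\big|_k[\eta], \]
with $C$ the extracted constant and $\mathscr{L}\cap\mathcal{P}_3=\mathcal{J}$. The interchange is justified by absolute convergence on $B$, using the estimates underlying Lemma~\ref{AH3} together with $\Re(w)>3$ and $k>22$.

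The heart of the argument is then the reassembly of the right-hand side. I would regroup the sum over $T\in\mathcal{J}$ into $GL_3(\mathbb{Z})$-orbits, writing $T=T_0[U]$ with $T_0$ running over $\mathcal{J}/GL_3(\mathbb{Z})$ and weighting each orbit by $1/\varepsilon_{T_0}$ to account for stabilizers. Two facts drive the collapse. First, the Poincaré series is $GL_3(\mathbb{Z})$-orbit invariant: for $U\in GL_3(\mathbb{Z})$ the Levi element $\mathrm{diag}(U,{}^tU^{-1})\in Sp_3(\mathbb{Z})$ normalizes $\Gamma_0$, carries $e^{2\pi i\tr(T_0Z)}$ to $e^{2\pi i\tr(T_0[U]Z)}$ with automorphy factor $(\det U)^{-k}=1$ (here $k$ is even), and therefore $P_{k,T_0[U]}=P_{k,T_0}$; thus the residual coset sum $\sum_{\eta}e^{2\pi i\tr(T_0Z)}|_k[\eta]$, completed by the Levi cosets, rebuilds the full Poincaré series $P_{k,T_0}(Z)=\sum_{\gamma\in\Gamma_0\backslash Sp_3(\mathbb{Z})}e^{2\pi i\tr(T_0Z)}|_k[\gamma]$. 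Second, summing the coefficients $p_{-w,-s,s+w+u-2}(iW\,T_0[U]\,W)$ over the Borel cosets of $GL_3(\mathbb{Z})$ is, by the definition of Selberg's Eisenstein series in Subsection~\ref{defSelberg}, precisely $E(T_0\mid w,s,-s-w-u+2)$; the transposition $s\leftrightarrow w$ and the shift in the third parameter are produced by the conjugation $WTW$, since $W$ reverses the flag of leading minors, and the identity $W\,T[U]\,W=(WTW)[WUW]$ turns the upper-triangular inducing data into the form matching $E$.

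Finally I would match the constant. The passage from the $SL_3(\mathbb{Z})$-structure implicit in $Sp_3(\mathbb{Z})$ to $GL_3(\mathbb{Z})$-orbits contributes the index $[GL_3(\mathbb{Z}):SL_3(\mathbb{Z})]=2$, producing the factor $2$ in the statement, while the phase $e^{-(s+2w+3u)\pi i/2}$ and the leading sign of Proposition~\ref{lipschitz} must cancel against the phase generated when rewriting $p_{-w,-s,s+w+u-2}(iWTW)$ in terms of $E(T\mid w,s,-s-w-u+2)$; verifying this cancellation leaves exactly $\tfrac{2}{\pi^{3/2}}(-2\pi i)^{s+2w+3u}/(\Gamma(s+w+u-1)\Gamma(w+u-1/2)\Gamma(u))$. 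I expect the main obstacle to be the coset/orbit bookkeeping of the reassembly step: one must check, compatibly, that the three nested quotients — the translations $\mathscr{L}^*$ feeding the Fourier side of Proposition~\ref{lipschitz}, the Borel cosets of $GL_3(\mathbb{Z})$ feeding the Selberg Eisenstein series, and the full Siegel cosets feeding the Poincaré series — fit together with no over- or under-counting, and that the stabilizer orders collapse exactly to $1/\varepsilon_{T_0}$. The convergence justifying the interchange of summation and the precise phase matching in the constant are the two secondary technical points.
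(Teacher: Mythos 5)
Your overall route is the same as the paper's: decompose $G\backslash Sp_3(\mathbb{Z})$ so as to expose the translation subsum, apply Proposition~\ref{lipschitz}, and then reassemble into Selberg--Eisenstein coefficients times Poincar\'e series. Your first two steps agree with the paper's intermediate result (Lemma~\ref{first half}). The gap is in the reassembly. In your displayed identity
\begin{equation*}
{\Omega}_{k,s,w,u}(Z)=C\sum_{T\in\mathcal{J}}p_{-w,-s,s+w+u-2}(iWTW)\,\sum_{\eta}e^{2\pi i \tr(TZ)}\big|_k[\eta],
\end{equation*}
each $T$ carries a \emph{single} coefficient $p_{-w,-s,s+w+u-2}(iWTW)$, and the residual set of $\eta$'s contains the Borel--Levi cosets $U\in {}^t\hspace{-0.02cm}P\backslash GL_3(\mathbb{Z})$ exactly once. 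Your two ``facts'' then use these same Levi cosets twice: once (``summing the coefficients over the Borel cosets'') to manufacture $E(T_0|w,s,-s-w-u+2)$, and once (``completed by the Levi cosets'') to promote the residual sum to $\mathcal{N}\backslash Sp_3(\mathbb{Z})$ and hence to $P_{k,T_0}$. They cannot do both. The missing mechanism --- and the pivot of the paper's argument --- is the change of variables $T\mapsto T[U^{-1}]$ performed inside each slash term: since $e(TZ)|_k[\mathrm{diag}(U,{}^t\hspace{-0.02cm}U^{-1})M_0]=e(T[U]Z)|_k[M_0]$, replacing $T$ by $T[U^{-1}]$ detaches the exponential from $U$, so that the Borel-coset sum acts \emph{only} on the coefficients and produces $E$ (via ${}^t\hspace{-0.02cm}PW=WP$ and the $GL_3(\mathbb{Z})$-invariance of $E$); only afterwards does the orbit decomposition $T=T_0[V]$ supply a fresh sum over the \emph{full} group $V\in GL_3(\mathbb{Z})$, and it is this full-group sum, together with the coprime pairs $\{C,D\}$, that parametrizes $\mathcal{N}\backslash Sp_3(\mathbb{Z})$ and rebuilds the Poincar\'e series. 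Without this redistribution your middle display visibly contains neither series, and any attempt to read both out of the single Levi sum over- or under-counts.

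Second, your constant accounting does not match the paper's definitions. In the paper the factor $2$ arises from the normalization $P_{k,T}=2^{-1}\sum_{M\in\mathcal{N}\backslash Sp_3(\mathbb{Z})}e(TZ)|_k[M]$: the rebuilt inner sum over $\mathcal{N}\backslash Sp_3(\mathbb{Z})$ is therefore $2P_{k,T_0}$, and that is where the $2$ in the statement comes from. You instead write $P_{k,T_0}(Z)=\sum_{\gamma\in\Gamma_0\backslash Sp_3(\mathbb{Z})}e(T_0Z)|_k[\gamma]$ (your $\Gamma_0$ is the paper's $\mathcal{N}$, but the $2^{-1}$ is omitted) and attribute the $2$ to the index $[GL_3(\mathbb{Z}):SL_3(\mathbb{Z})]$; with the paper's definition of $P_{k,T}$ these two choices do not reproduce the stated constant. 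Relatedly, the stabilizer bookkeeping needs to be made explicit rather than asserted: $\varepsilon_T$ counts $SL_3(\mathbb{Z})$-stabilizers, while your orbit sum runs over $GL_3(\mathbb{Z})$, whose stabilizers always contain $-I_3$, so the weight attached to a full $GL_3(\mathbb{Z})$-sum is not automatically $1/\varepsilon_{T_0}$. These normalization issues are exactly of the size (factors of $2$) that the final constant is sensitive to.
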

\vspace{0.2cm}
Our first theorem lists the analytic properties of the kernel.

\vspace{0.2cm}
\begin{theorem}\label{theoremanalyticcontinuation}
Let $k>22$, $\xi(2s)=\pi^{-s}\Gamma(s)\zeta(2s)$ and $\phi(s)=s(1-s)(s-\frac{1}{2})(\frac{1}{2}-s)$. The series
\vspace{-0.15cm}
\begin{equation}\label{finalfunction}
\phi(s)\xi(2s)\phi(w)\xi(2w)\phi(s+w-1/2)\xi(2s+2w-1){\Omega}_{k,s,w,u}(Z)
\vspace{-0.03cm}
\end{equation}
initially defined on $A \times \mathcal{H}_3$ admits a holomorphic continuation to $\mathbb{C}^3 \times \mathcal{H}_3.$

Such a continuation is a weight $k$ Siegel cusp form over $Sp_3(\mathbb{Z})$ at every triple $(s,w,u) \in \mathbb{C}^3$ and it satisfies the functional equations
\begin{eqnarray*}
e^{(s+2w+3u)\pi i}{\Omega}_{k,s,w,u}(Z) &=& {\Omega}_{k,w,s,-s-w-u+k}(Z) \\
\xi(2w){\Omega}_{k,s,w,u}(Z) &=& \xi(2-2w) {\Omega}_{k,s+w-1/2,1-w,w+u-1/2}(Z) \\
\xi(2s)\xi(2w)\xi(2s+2w-1){\Omega}_{k,s,w,u}(Z) &=& \xi(2-2s)\xi(2-2w)\xi(-2s-2w+3) \\
&&\times\; {\Omega}_{k,1-w,1-s,s+w+u-1}(Z).
\end{eqnarray*}
\end{theorem}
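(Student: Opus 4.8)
The plan is to deduce everything from the Poincaré-series representation of Proposition~\ref{newrepresentation} together with the analytic properties of Selberg's Eisenstein series established in Subsection~\ref{defSelberg}. The crucial observation is that the product of normalizing factors multiplying $\Omega_{k,s,w,u}$ in (\ref{finalfunction}) is precisely the completion of the Selberg Eisenstein series in the arguments appearing in Proposition~\ref{newrepresentation}: writing the Eisenstein argument there as $(a,b,c)=(w,s,-s-w-u+2)$, one has $\xi(2s)=\xi(2b)$, $\xi(2w)=\xi(2a)$, $\xi(2s+2w-1)=\xi(2a+2b-1)$ and likewise $\phi(s)=\phi(b)$, $\phi(w)=\phi(a)$, $\phi(s+w-1/2)=\phi(a+b-1/2)$, so that these six factors are exactly the products over the three positive roots that render the completed series $E^{*}(Y\,|\,a,b,c)$ entire in the parameters. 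First I would record this completion and the resulting entire continuation of $E^{*}$.

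For the analytic continuation itself I would use that $\mathfrak{S}_{3,k}$ is finite-dimensional: fixing $T_{1},\dots,T_{d}\in\mathcal{J}$ whose Poincaré series form a basis, each $P_{k,T}$ is a fixed linear combination of the $P_{k,T_{i}}$, and Proposition~\ref{newrepresentation} rewrites $\Omega_{k,s,w,u}$ as a \emph{finite} sum $\sum_{i} g_{i}(s,w,u)\,P_{k,T_{i}}$ with explicit scalar coefficients $g_{i}$. Multiplying by the completion factor turns each $g_{i}$ into a series in the entire objects $E^{*}(T\,|\,a,b,c)$ times the $1/\Gamma$ and exponential prefactors, which are themselves entire; the function (\ref{finalfunction}) then continues holomorphically to all of $\mathbb{C}^{3}\times\mathcal{H}_{3}$. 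Since at every $(s,w,u)$ it is a finite combination of the fixed cusp forms $P_{k,T_{i}}\in\mathfrak{S}_{3,k}$, and $\mathfrak{S}_{3,k}$ is a closed (finite-dimensional) subspace, the continuation is a weight $k$ Siegel cusp form for $Sp_{3}(\mathbb{Z})$ at each parameter triple.

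The two functional equations carrying $\xi$-factors but no $k$ would come directly from the Weyl-group functional equations of $E^{*}$ transported through Proposition~\ref{newrepresentation}. A direct substitution shows that the parameter changes in the second and third equations act on the Eisenstein argument $(w,s,-s-w-u+2)$ exactly as a simple reflection and as the long Weyl element of $S_{3}$, respectively: for instance in the second equation $(s,w,u)\mapsto(s+w-\tfrac12,\,1-w,\,w+u-\tfrac12)$ sends the argument to $(1-w,\,s+w-\tfrac12,\,-s-w-u+2)$, reflecting the first Selberg parameter while fixing the third. I would then check the bookkeeping that makes these reduce to the bare Eisenstein identities: the exponential prefactor $(-2\pi i)^{s+2w+3u}$ and the factor $\Gamma(s+w+u-1)$ are invariant under these substitutions, while the pair $\Gamma(w+u-1/2),\Gamma(u)$ merely swaps, so the only surviving discrepancy is the ratio $\xi(2w)/\xi(2-2w)$ (resp.\ the product of three such ratios), which is precisely the content of the Eisenstein functional equation.

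Finally, the first functional equation, which involves the weight $k$, does not come from the Eisenstein symmetry (the exponent $s+2w+3u$ is not the one the prefactor produces under $(s,w,u)\mapsto(w,s,-s-w-u+k)$); instead I would read it off the defining coset sum (\ref{defikernel}). The goal is to exhibit an element $\gamma\in Sp_{3}(\mathbb{Z})$ compatible with the coset structure $G\backslash Sp_{3}(\mathbb{Z})$ whose weight-$k$ slash action sends $p_{-s,-w,-u}$ to $e^{(s+2w+3u)\pi i}\,p_{-w,-s,-(-s-w-u+k)}$, with the shift $u\mapsto-s-w-u+k$ and the phase arising from the automorphy factor $\det(CZ+D)^{-k}$ and the branch conventions in the $h_{j}$; re-indexing the sum then yields the identity on $A\times\mathcal{H}_{3}$, and it propagates to $\mathbb{C}^{3}$ by the continuation already obtained. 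The main obstacle I anticipate is twofold: first, justifying the interchange of the infinite $T$-sum with the analytic continuation, equivalently the locally uniform convergence on $\mathbb{C}^{3}\times\mathcal{H}_{3}$ of $\sum_{T}\varepsilon_{T}^{-1}E^{*}(T\,|\,\cdots)\,P_{k,T}$, where the rapid decay of the Poincaré series in $T$ must be shown to dominate the polynomial growth of the completed Eisenstein series; and second, pinning down the correct $\gamma$ and tracking the branch choices so that the phase in the first functional equation comes out exactly as $e^{(s+2w+3u)\pi i}$.
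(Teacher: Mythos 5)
Your plan for the analytic continuation has a fatal gap. You propose to continue $(\ref{finalfunction})$ to all of $\mathbb{C}^3$ by arguing that the $\phi\cdot\xi$ factors complete each term $E(T\,|\,w,s,-s-w-u+2)$ to an entire function of the parameters, and that the sum over $T$ in Proposition~\ref{newrepresentation} (or its rearrangement into a finite basis of Poincar\'e series) is then entire. But entirety of the individual terms does not give convergence of the series: the obstruction to convergence in $T$ is the \emph{power of} $\det T$, not the poles of each term, and no parameter-dependent entire prefactor changes that. Concretely, by (\ref{detE}) the Eisenstein factor scales like $(\det T)^{\Re(s+w+u)-2}$ (up to the remaining parameters), while $P_{k,T}$ decays only polynomially, like $(\det T)^{2-k/2}$ on compacta (Lemma~\ref{poincarebounded}); so the $T$-series converges only in regions of the shape $\Re(s+w+u)<k/2-1$ (the set $C$), and diverges once $\Re(s+w+u)$ exceeds this, for any fixed $k$. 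Rewriting $\Omega_{k,s,w,u}=\sum_i g_i(s,w,u)P_{k,T_i}$ over a finite basis does not help, because the scalar coefficient series defining the $g_i$ diverge in exactly the same region. You flag this convergence question as a technical "obstacle" at the end, but it is not a technicality to be checked --- the locally uniform convergence on $\mathbb{C}^3\times\mathcal{H}_3$ you would need is simply false, so your mechanism cannot produce the continuation.

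What the paper does instead, and what is missing from your proposal, is a patching-plus-tube argument: the kernel is given convergent representations only on restricted parameter regions ($A$, $C$, $WC$, and the set $D$ with negative $\Re(w)$, the latter requiring the decomposition of the Selberg series through $\zeta^*_{\mathbb{Z}^2}$ and Katsurada's estimate (\ref{zeta*}), i.e.\ Lemma~\ref{negativerealpart}); the functional equations are then used as \emph{definitions} to transport the function to the images of these regions (Lemmas~\ref{asets}, \ref{Wsets}, \ref{aWD} and the alpha sets); and finally Bochner's tube theorem (a holomorphic function on a connected tube extends to its convex hull, \cite[p.~41]{Hor}) fills in all of $\mathbb{C}^3$. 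This last step is the essential idea, with no counterpart in your write-up, and without it the union of regions reachable by convergent series and functional equations falls well short of $\mathbb{C}^3$. The parts of your proposal that do align with the paper are the first functional equation (your matrix $\gamma$ is the paper's $\widetilde{W}=\bigl(\begin{smallmatrix}0&-W\\ W&0\end{smallmatrix}\bigr)$ in Lemma~\ref{wequation}) and the parameter bookkeeping for the second equation (the invariance of $s+2w+3u$ and $s+w+u-1$ and the swap of $w+u-1/2$ with $u$, as in Lemma~\ref{Secondfunctionalequationa}); also, once a continuation of the scalar coordinates were actually in hand, your finite-dimensionality argument for the cusp-form property would be an acceptable alternative to the paper's identity-theorem argument.
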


In \cite{Mar2} Martin obtains two functional equations for his integral kernel. Our third proposition shows the structure of the kernel's functional equations.

\vspace{0.2cm}
\begin{proposition}\label{group}
The group generated by the functional equations in Theorem~\ref{theoremanalyticcontinuation} is isomorphic to $D_{12}$, the dihedral group of order twelve.
\end{proposition}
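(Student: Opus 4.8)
The plan is to read each functional equation in Theorem~\ref{theoremanalyticcontinuation}, after discarding the scalar prefactors (the powers of $\pi i$ and the $\xi$-quotients, which are cocycles and play no role in the underlying group), as an affine substitution of the parameter triple $(s,w,u)\in\mathbb{C}^3$. From the right-hand sides one extracts $\sigma_1\colon(s,w,u)\mapsto(w,s,-s-w-u+k)$, $\sigma_2\colon(s,w,u)\mapsto(s+w-\tfrac12,\,1-w,\,w+u-\tfrac12)$, and $\sigma_3\colon(s,w,u)\mapsto(1-w,\,1-s,\,s+w+u-1)$. A direct substitution shows each $\sigma_i$ is an involution, which merely reflects that each functional equation is symmetric in its two sides. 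The proposition then amounts to identifying the group $G=\langle\sigma_1,\sigma_2,\sigma_3\rangle$ inside the affine group of $\mathbb{C}^3$.

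The first step is to linearize. I would verify that the three maps share the common fixed point $p_0=(\tfrac12,\tfrac12,\tfrac{k-1}{2})$; since any composition of maps fixing $p_0$ again fixes $p_0$, the whole group fixes $p_0$, and conjugation by the translation sending $p_0$ to the origin carries every element faithfully onto its linear part. In the centered coordinates $(x,y,z)=(s-\tfrac12,\,w-\tfrac12,\,u-\tfrac{k-1}{2})$ the generators become the linear maps $A_1\colon(x,y,z)\mapsto(y,x,-x-y-z)$, $A_2\colon(x,y,z)\mapsto(x+y,-y,y+z)$, and $A_3\colon(x,y,z)\mapsto(-y,-x,x+y+z)$, so that $G\cong\langle A_1,A_2,A_3\rangle\subset GL_3(\mathbb{C})$ and the question becomes purely linear-algebraic.

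Next I would establish the decisive relations. First, $A_3=-A_1$, whence $-I=A_3A_1\in G$ and $A_3$ is redundant once $A_1,A_2$ are present. Second, setting $P=A_1A_2$ one computes $P^3=-I$; since $P^2\neq I$ and $-I\neq I$, the element $P$ has order exactly $6$. Finally $A_1PA_1=A_2A_1=P^{-1}$, the defining dihedral relation. Hence $\langle A_1,A_2\rangle=\langle P,A_1\mid P^6=A_1^2=1,\ A_1PA_1=P^{-1}\rangle$ is dihedral of order $12$, the reflection $A_1$ being no power of $P$ because the only involution in $\langle P\rangle$ is $P^3=-I\neq A_1$. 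Since $A_3=-A_1=P^3A_1$ already lies in $\langle A_1,A_2\rangle$, the third generator contributes nothing and $G=\langle A_1,A_2\rangle\cong D_{12}$.

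The main obstacle is the bookkeeping in the two linear identities $A_3=-A_1$ and $P^3=-I$: these are exactly the relations that force the cyclic part to have order $6$ rather than $2$ or $3$, and that collapse three involutions to a two-generator dihedral presentation. Once the common fixed point is located and the centered matrices are written down, the remainder reduces to multiplying a few $3\times3$ matrices and recognizing the dihedral presentation; the only delicate point is confirming that $P$ has order exactly $6$ and that $A_1\notin\langle P\rangle$, which pins the order at $12$ and rules out a proper quotient of the abstract dihedral group.
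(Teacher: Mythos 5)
Your proof is correct, and it takes a route that differs in an interesting way from the paper's. The paper never linearizes: it works directly with the affine substitutions as words, naming them $w\colon(s,w,u)\mapsto(w,s,-s-w-u+k)$, $a\colon(s,w,u)\mapsto(s+w-1/2,1-w,w+u-1/2)$ and $aba\colon(s,w,u)\mapsto(1-w,1-s,s+w+u-1)$, introduces the auxiliary involution $b=a\,aba\,a\colon(s,w,u)\mapsto(1-s,s+w-1/2,u)$ (which is precisely the fourth functional equation of Lemma~\ref{fourtheq}), checks that $aw$ has order six and that $b\,(aw)\,b^{-1}=(aw)^{-1}$, and then recovers the original generators from the dihedral pair via $a=(aw)^2b$ and $w=a\,aw$, so that $G=\langle aw,b\rangle\cong D_{12}$. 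You instead pass to the common fixed point $(\tfrac12,\tfrac12,\tfrac{k-1}{2})$ and reduce everything to $3\times3$ matrices, where the structure becomes very transparent: the identity $A_3=-A_1$ exhibits $-I$ as a central element and shows the third equation is redundant, and the rotation is detected by $P^3=-I$. (Your $P=A_1A_2$ is, up to inversion, the linear part of the paper's $aw$, and your reflection is $A_1$ itself rather than the paper's $b$; these are inessential choices.) Each approach has its advantages. The paper's word manipulation stays coordinate-free and sets up the subsequent Remark, which reuses $b$ to give alternative presentations of $G$ and the corresponding alternative systems of functional equations. Your matrix computation is more mechanical to verify, and it is actually slightly more complete on one point: to conclude that the presentation $\langle aw,b \mid (aw)^6, b^2, b\,aw\,b^{-1}aw\rangle$ gives $D_{12}$ rather than a proper quotient, one must know the group has order exactly twelve, i.e.\ that the reflection is not a power of the rotation; the paper asserts the isomorphism without addressing this, whereas you settle it cleanly by noting that the only involution in $\langle P\rangle$ is $P^3=-I\neq A_1$.
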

\vspace{0.2cm}

Every $f$ in $\mathfrak{S}_{3,k}$ has associated another $f^* \in \mathfrak{S}_{3,k}$; namely, if (\ref{fourier}) is the Fourier series representation of $f$, then $f^*(Z):=\sum_{T \in \mathcal{J}} \overline{A_T}\;e^{2\pi i \tr(TZ)}.$ If $\langle \;,\; \rangle$ denotes the Petersson inner product (see Subsection \ref{cusp}), our second theorem shows that the considered series in (\ref{defikernel}) is the kernel associated with the Dirichlet series (\ref{KMFernando}).

\vspace{0.2cm}
\begin{theorem}\label{teorema2}
Let $k>22$ and $(s,w,u) \in \mathbb{C}^3$ such that $\Re(s)>1, \ \Re(w)>1$ and $\Re(u)>k+1$. Then
\vspace{-0.25cm}
\begin{equation*}
\langle {\Omega}_{k,s,w,u}, f^* \rangle=\frac{C_k\;(2\pi i)^{-(s+2w+3u)}}{\Gamma(-s-w-u+k)\Gamma(-w-u+k-1/2) \Gamma(-u+k-1)}D_f(s,w,u)
\end{equation*}
for all $f \in \mathfrak{S}_{3,k}$, where $C_k=-2^{12-3k}\pi^6 \, \Gamma(k-2) \, \Gamma(k-5/2) \, \Gamma(k-3)$.
\end{theorem}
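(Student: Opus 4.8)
The plan is to combine the Poincaré-series representation of Proposition~\ref{newrepresentation} with the reproducing property of the Poincaré series $P_{k,T}$, and then to reach the stated region by analytic continuation. Since the Petersson product is linear in its first slot, Proposition~\ref{newrepresentation} lets me write, on $B\times\mathcal{H}_3$, the quantity $\langle\Omega_{k,s,w,u},f^*\rangle$ as the constant $\frac{2}{\pi^{3/2}}\frac{(-2\pi i)^{s+2w+3u}}{\Gamma(s+w+u-1)\Gamma(w+u-1/2)\Gamma(u)}$ times $\sum_{T\in\mathcal{J}/GL_3(\mathbb{Z})}\frac{1}{\varepsilon_T}E(T|w,s,-s-w-u+2)\,\langle P_{k,T},f^*\rangle$. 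First I would justify this term-by-term evaluation, which is legitimate on $B$ because there the Poincaré expansion converges absolutely in the Petersson norm.

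Next I would invoke the reproducing property: for every $g=\sum_S a_g(S)e^{2\pi i\tr(SZ)}$ in $\mathfrak{S}_{3,k}$ one has $\langle g,P_{k,T}\rangle=\gamma_{3,k}(\det T)^{-(k-2)}a_g(T)$, where $\gamma_{3,k}$ is real and equal to the degree-$3$ Gindikin gamma factor, proportional to $\Gamma(k-2)\Gamma(k-5/2)\Gamma(k-3)$ --- precisely the triple of gamma factors appearing in $C_k$. Taking conjugates and using $a_{f^*}(T)=\overline{A_T}$ gives $\langle P_{k,T},f^*\rangle=\gamma_{3,k}(\det T)^{2-k}A_T$, so $\langle\Omega_{k,s,w,u},f^*\rangle$ becomes a constant times $\sum_{T}\frac{A_T}{\varepsilon_T}(\det T)^{2-k}E(T|w,s,-s-w-u+2)$. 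Here I note that, since $-I\in GL_3(\mathbb{Z})$ fixes every $T$ and $T[-U]=T[U]$ with $\det(-U)=-\det U$, the orbit spaces $\mathcal{J}/GL_3(\mathbb{Z})$ and $\mathcal{J}/SL_3(\mathbb{Z})$ coincide, so no extra orbit factor intervenes. The crucial step is then to apply the functional equation of Selberg's Eisenstein series to rewrite $(\det T)^{2-k}E(T|w,s,-s-w-u+2)$ in terms of $E(T|s,w,u)$; the three gamma arguments are paired by the reflection $x\mapsto k-1-x$, since $(s+w+u-1)+(-s-w-u+k)=k-1$, $(w+u-1/2)+(-w-u+k-1/2)=k-1$, and $u+(-u+k-1)=k-1$, which is exactly the shape produced by that functional equation. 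Carrying this out converts the denominator $\Gamma(s+w+u-1)\Gamma(w+u-1/2)\Gamma(u)$ into $\Gamma(-s-w-u+k)\Gamma(-w-u+k-1/2)\Gamma(-u+k-1)$ and, after collecting $\gamma_{3,k}$, the factor $\tfrac{2}{\pi^{3/2}}$, and the phase turning $(-2\pi i)^{s+2w+3u}$ into $(2\pi i)^{-(s+2w+3u)}$, assembles into the constant $C_k$ and the series $D_f(s,w,u)=\sum_{T\in\mathcal{J}/SL_3(\mathbb{Z})}\frac{A_T}{\varepsilon_T}E(T|s,w,u)$.

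Finally, the identity is established so far only on $B\times\mathcal{H}_3$, which is disjoint from the target region $\Re(s)>1,\Re(w)>1,\Re(u)>k+1$, so I would close the argument by analytic continuation: both sides are meromorphic in $(s,w,u)$ --- the left by Theorem~\ref{theoremanalyticcontinuation} (the product of $\Omega_{k,s,w,u}$ by the $\phi\,\xi$ factors being entire, $\Omega$ itself is meromorphic), the right by the known continuation of $D_f(s,w,u)$ together with the gamma factors --- and they agree on the open set $B$, hence wherever both are holomorphic, in particular on the stated region. I expect the middle step to be the main obstacle: verifying the exact functional equation of the Selberg Eisenstein series that interchanges the argument triples and absorbs the $(\det T)^{2-k}$ twist, and then tracking the phase $e^{\pm i\pi(s+2w+3u)}$ and the numerical factors so that they collapse precisely to $C_k=-2^{12-3k}\pi^6\,\Gamma(k-2)\,\Gamma(k-5/2)\,\Gamma(k-3)$ and to $(2\pi i)^{-(s+2w+3u)}$ with the correct sign. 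A self-contained alternative, bypassing Proposition~\ref{newrepresentation}, is to unfold $\langle\Omega_{k,s,w,u},f^*\rangle$ directly against the coset sum defining $\Omega$ over $G\backslash\mathcal{H}_3$, insert the Fourier expansion $\overline{f^*(Z)}=\sum_{T\in\mathcal{J}}A_T\,e^{-2\pi i\tr(T\overline{Z})}$, and evaluate the resulting cone integral $\int_{\mathcal{P}_3}p_{-s,-w,-u}(iY)\,e^{-2\pi\tr(TY)}(\det Y)^{k-4}\,dY$ by the Gram--Schmidt factorization of the cone, where the three gamma factors and $E(T|s,w,u)$ emerge directly at the cost of the same continuation.
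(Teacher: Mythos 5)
Your first half does match the paper's strategy: Proposition~\ref{newrepresentation}, the Petersson formula $\langle P_{k,T},f^*\rangle=\pi^{3/2}(4\pi)^{6-3k}\Gamma(k-2)\Gamma(k-5/2)\Gamma(k-3)(\det T)^{2-k}A_T$, and the identity (\ref{detE}) reduce $\langle\Omega_{k,s,w,u},f^*\rangle$ to an explicit constant times $(-2\pi i)^{s+2w+3u}\,\Gamma(s+w+u-1)^{-1}\Gamma(w+u-1/2)^{-1}\Gamma(u)^{-1}\,D_f(w,s,-s-w-u+k)$ --- note: $D_f$ at the \emph{permuted} triple, not at $(s,w,u)$. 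Even here there is a caveat: you assert the term-by-term evaluation is ``legitimate on $B$'', but with the only bounds available (Hecke's estimate $A_T=\mathcal{O}((\det T)^{k/2})$ and $|P_{k,T}(Z)|\le C_k(\det T)^2$ on $\mathcal{F}$) the required absolute convergence is guaranteed only where roughly $\Re(s+w+u)<k/2-1$, which does not cover all of $B$ once $k\ge 28$; the paper instead works on $C_*=\{\Re(s)>1,\Re(w)>1,\Re(s+w+u)<-1\}$, where it proves the convergence of the relevant double sum/integral before applying dominated convergence. This is fixable by shrinking the region, so it is not the main problem.

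The genuine gap is the step you yourself flag as crucial. There is no termwise functional equation of Selberg's Eisenstein series converting $E(T|w,s,-s-w-u+k)$ into $E(T|s,w,u)$ with a pure ratio of gamma factors. The functional equations of $E$ are generated by $(s,w,u)\mapsto(s+w-1/2,1-w,w+u-1/2)$ and $(s,w,u)\mapsto(1-s,s+w-1/2,u)$, each carrying completed-zeta factors $\xi(\cdot)/\xi(\cdot)$ that would survive into your final formula (no $\xi$'s appear in the theorem); and the transposition $(s,w,u)\mapsto(w,s,-s-w-u+k)$ simply does not lie in the group they generate --- this is exactly why adjoining it in Proposition~\ref{group} enlarges the symmetry group from order $6$ to $D_{12}$. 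The $x\mapsto k-1-x$ pairing of gamma arguments you observe is real, but it is produced by a \emph{change of variables}, not by an Eisenstein functional equation. If what you actually had in mind is the relation $\Lambda_f(s,w,u)=(-1)^{k/2}\Lambda_f(w,s,-s-w-u+k)$ for the Dirichlet series itself, that is Proposition~\ref{application}, which the paper deduces \emph{from} this theorem, so invoking it here is circular; the same objection hits your closing continuation step, which appeals to ``the known continuation of $D_f(s,w,u)$'' --- also only established afterwards, from Theorems~\ref{theoremanalyticcontinuation} and~\ref{teorema2}. The paper needs none of this: since its identity holds on $C_*$, and the involution $(s,w,u)\mapsto(w,s,-s-w-u+k)$ interchanges $C_*$ with the target region $\{\Re(s)>1,\Re(w)>1,\Re(u)>k+1\}$, one substitutes variables and then uses the kernel's \emph{own} first functional equation $e^{(s+2w+3u)\pi i}\Omega_{k,s,w,u}=\Omega_{k,w,s,-s-w-u+k}$ (Lemma~\ref{wequation}, a consequence of modular invariance under $\widetilde{W}$, not of Eisenstein theory) to turn $\langle\Omega_{k,w,s,-s-w-u+k},f^*\rangle$ back into $\langle\Omega_{k,s,w,u},f^*\rangle$; on the target region $D_f(s,w,u)$ converges outright by Lemma~\ref{Dirichletconverges}, so no continuation of $D_f$ is ever required. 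Your sketched alternative (direct unfolding of the coset sum against $f^*$ and evaluation of the cone integral) is a genuinely different and plausible route, but it is not carried out, and as written your proof stands or falls with the nonexistent Eisenstein functional equation.
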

\vspace{0.2cm}

As an application of Theorem~\ref{theoremanalyticcontinuation} and Theorem~\ref{teorema2}, we obtain a second proof of the analytic properties of the series (\ref{KMFernando}). The first proof is due to Maass, who used the Hecke character $E(T|s,w,-(s+2w)/3)$, via a generalized Mellin transform and invariant differential operators (see \cite[p.~216]{Maa}).
\vspace{0.2cm}

\begin{proposition} \label{application}
    Let $k>22$ and $f \in \mathfrak{S}_{3,k}$. The multivariable Dirichlet series
\vspace{-0.05cm}
\begin{equation*}
\frac{\phi(s)\xi(2s)\phi(w)\xi(2w)\phi(s+w-1/2)\xi(2s+2w-1)}{\Gamma(-s-w-u+k)\,\Gamma(-w-u+k-1/2)\, \Gamma(-u+k-1)}\,D_f(s,w,u)
\vspace{-0.05cm}
\end{equation*}
initially defined for $\Re(s)>1, \Re(w)>1, \Re(u)>k/2+1$ admits a holomorphic continuation to $\mathbb{C}^3$. Moreover, if we define
\vspace{-0.1cm}
\begin{equation*}
\begin{aligned}
\hspace{0.2cm}    \Lambda_f(s,w,u)=&\,(2\pi)^{-(s+2w+3u)}\Gamma(s+w+u-1)\Gamma(w+u-1/2)\Gamma(u)\\
          &\;\;\;\;\;\;\;\;\;\;\;\;\;\;\;\;\;\;\;\;\;\;\;\;\;\;\;\;\;\;\;\;\;\;\;\;\;\;\;\;\;\;\; \times \xi(2s) \xi(2w)\xi(2s+2w-1)D_f(s,w,u),&
\end{aligned}
\vspace{-0.3cm}
\end{equation*}
then
\vspace{-0.2cm}
\begin{eqnarray*}
\Lambda_f(s,w,u)&=&(-1)^{k/2}\Lambda_f(w,s,-s-w-u+k)\\
&=&\Lambda_f(s+w-1/2,1-w,w+u-1/2)=\Lambda_f(1-w,1-s,s+w+u-1).
\end{eqnarray*}
\end{proposition}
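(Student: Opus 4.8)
The plan is to derive both assertions directly from the two main theorems, using no new analytic input. The key observation is that the prefactors displayed in the statement are precisely those that either cancel against, or are left invariant by, the explicit Gamma, $\xi$, and exponential factors already present in Theorem~\ref{teorema2} and Theorem~\ref{theoremanalyticcontinuation}. Thus the proof reduces to bookkeeping of these factors under the three affine substitutions of $(s,w,u)$. I would first solve the identity of Theorem~\ref{teorema2}, valid for $\Re(s)>1,\ \Re(w)>1,\ \Re(u)>k+1$, for the Dirichlet series, obtaining
\[
D_f(s,w,u)=\frac{\Gamma(-s-w-u+k)\Gamma(-w-u+k-1/2)\Gamma(-u+k-1)}{C_k}\,(2\pi i)^{s+2w+3u}\,\langle \Omega_{k,s,w,u}, f^*\rangle .
\]

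For the holomorphic continuation, I would multiply this by $\phi(s)\xi(2s)\phi(w)\xi(2w)\phi(s+w-1/2)\xi(2s+2w-1)$ and divide by $\Gamma(-s-w-u+k)\Gamma(-w-u+k-1/2)\Gamma(-u+k-1)$; the three Gamma factors cancel, leaving
\[
\frac{(2\pi i)^{s+2w+3u}}{C_k}\,\big\langle\, \phi(s)\xi(2s)\phi(w)\xi(2w)\phi(s+w-1/2)\xi(2s+2w-1)\,\Omega_{k,s,w,u},\, f^*\big\rangle ,
\]
where the scalar factors have been pulled into the first, linear slot of the Petersson pairing. By Theorem~\ref{theoremanalyticcontinuation} the bracketed cusp form continues holomorphically to $\mathbb{C}^3\times\mathcal{H}_3$; since $f^*$ is fixed and $\langle\cdot,f^*\rangle$ is a continuous linear functional on the finite-dimensional space $\mathfrak{S}_{3,k}$, the pairing is holomorphic on $\mathbb{C}^3$, and multiplication by the entire, nowhere-vanishing factor $(2\pi i)^{s+2w+3u}/C_k$ preserves this. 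The target series is holomorphic on the connected tube $\Re(u)>k/2+1$ and agrees with this expression on the open subset $\Re(u)>k+1$, so by the identity theorem for several complex variables the right-hand side furnishes its continuation to all of $\mathbb{C}^3$.

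For the functional equations, I would substitute the same formula for $D_f$ into the definition of $\Lambda_f$. Writing $\Gamma_+(s,w,u)=\Gamma(s+w+u-1)\Gamma(w+u-1/2)\Gamma(u)$ for the Gamma product in $\Lambda_f$ and $\Gamma_-(s,w,u)$ for the one coming from Theorem~\ref{teorema2}, and using $(2\pi)^{-(s+2w+3u)}(2\pi i)^{s+2w+3u}=i^{\,s+2w+3u}$, this gives
\[
\Lambda_f(s,w,u)=\frac{i^{\,s+2w+3u}}{C_k}\,\Gamma_+(s,w,u)\,\Gamma_-(s,w,u)\,\xi(2s)\xi(2w)\xi(2s+2w-1)\,\langle\Omega_{k,s,w,u},f^*\rangle .
\]
I would then check, under each of the three substitutions in the statement, that: (i) the product $\Gamma_+\Gamma_-$ is invariant—under $(s,w,u)\mapsto(w,s,-s-w-u+k)$ one finds $\Gamma_+\leftrightarrow\Gamma_-$, while under the other two the arguments of $\Gamma_+$ and of $\Gamma_-$ are merely permuted; (ii) the factor $\xi(2s)\xi(2w)\xi(2s+2w-1)$ changes by exactly the reciprocal of the $\xi$-ratio produced when the corresponding functional equation of Theorem~\ref{theoremanalyticcontinuation} is used to rewrite $\langle\Omega_{k,s',w',u'},f^*\rangle$ in terms of $\langle\Omega_{k,s,w,u},f^*\rangle$, so the two cancel; and (iii) the exponent $s+2w+3u$ is invariant under the second and third substitutions but becomes $3k-(s+2w+3u)$ under the first. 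Feeding the kernel's functional equation into the pairing, the second and third cases yield $\Lambda_f$ invariant, and in the first case the surviving power of $i$ is $i^{3k}$; since $k$ is even this equals $(-1)^{k/2}$, producing the claimed sign (here one uses $e^{(s+2w+3u)\pi i}=i^{\,2(s+2w+3u)}$ to absorb the shift in the exponent).

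The main obstacle is purely computational: confirming the invariance of $\Gamma_+\Gamma_-$ and the exact cancellation of the $\xi$-factors against the kernel's functional equations requires carefully tracking each argument through the affine maps and checking that the powers of $i$ and of $2\pi$ recombine correctly. A secondary point, handled by the identity theorem on the overlap, is reconciling the domain $\Re(u)>k+1$ on which Theorem~\ref{teorema2} holds with the stated initial domain $\Re(u)>k/2+1$.
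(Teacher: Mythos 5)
Your proposal is correct in substance and follows the paper's overall strategy: solve Theorem~\ref{teorema2} for $D_f$, realize the completed series as $(2\pi i)^{s+2w+3u}C_k^{-1}\langle \phi(s)\xi(2s)\phi(w)\xi(2w)\phi(s+w-1/2)\xi(2s+2w-1)\,\Omega_{k,s,w,u},f^*\rangle$, and read off the functional equations by feeding the kernel's functional equations from Theorem~\ref{theoremanalyticcontinuation} into the linear slot of the Petersson pairing. Your bookkeeping in steps (i)--(iii) is accurate: $\Gamma_+$ and $\Gamma_-$ swap under $(s,w,u)\mapsto(w,s,-s-w-u+k)$ and are internally permuted under the other two maps, the exponent $s+2w+3u$ goes to $3k-(s+2w+3u)$ in the first case and is fixed in the others, and $i^{3k}=(-1)^{k/2}$ for even $k$ gives the sign; the $\xi$-ratios cancel exactly as you claim.

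Where you genuinely diverge from the paper is the proof that the pairing is holomorphic in $(s,w,u)$. The paper works with the integral itself: it introduces Remark~\ref{remark5.1} (for $f\in\mathfrak{S}_{3,k}$ and $\varepsilon>0$, $f(Z)(\det Y)^{\varepsilon}$ is bounded on $\mathcal{F}$), checks the integrand $\psi(s,Z)$ is bounded on $B(s_0,\varepsilon)\times\mathcal{F}$, and invokes a corollary of Cauchy's integral formula and dominated convergence (Rudin) to differentiate under the integral over the finite-volume domain $\mathcal{F}$. You instead invoke finite-dimensionality of $\mathfrak{S}_{3,k}$, which is cleaner and avoids Remark~\ref{remark5.1} and the measure-theoretic apparatus entirely. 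This works, but as written it has a small gap: continuity of the functional $\langle\cdot,f^*\rangle$ is not the issue (every linear functional on a finite-dimensional space is continuous); what you need is that $(s,w,u)\mapsto\Omega^{\mathrm{cont}}_{k,s,w,u}$ is holomorphic \emph{as a map into} $\mathfrak{S}_{3,k}$, knowing only that each evaluation $(s,w,u)\mapsto\Omega^{\mathrm{cont}}_{k,s,w,u}(Z)$ is holomorphic. This is standard: choose a basis $g_1,\dots,g_d$ of $\mathfrak{S}_{3,k}$ and points $Z_1,\dots,Z_d$ with $\det(g_j(Z_i))\neq 0$ (possible since point evaluations separate cusp forms, hence span the dual); then the coordinates of $\Omega^{\mathrm{cont}}_{k,s,w,u}$ are fixed linear combinations of the holomorphic functions $\Omega^{\mathrm{cont}}_{k,s,w,u}(Z_i)$, and the pairing is a finite sum $\sum_j c_j(s,w,u)\langle g_j,f^*\rangle$. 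With that lemma inserted, your route is complete, and it is arguably more elementary than the paper's; what the paper's integral argument buys is independence of finite-dimensionality (it would survive in settings where the target space of forms is not known to be finite-dimensional). Your explicit identity-theorem reconciliation of the domain $\Re(u)>k+1$ of Theorem~\ref{teorema2} with the stated initial domain $\Re(u)>k/2+1$ is a point the paper passes over silently, and is worth keeping.
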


\subsection{Outline of the proofs}

Besides the introduction, the work has four sections. In Section \ref{preliminaries} we recall some basic concepts and prove Proposition~\ref{lipschitz}. In Section \ref{sectionintegralkernel} we study the kernel of the title, and prove Proposition~\ref{newrepresentation} and Theorem~\ref{theoremanalyticcontinuation}; we end the section with Proposition~\ref{group}. In Section \ref{SectionKM} we introduce the twisted Koecher-Maass series and prove Theorem~\ref{teorema2}. In Section \ref{Application} we show an application stated in Proposition~\ref{application}. Now we describe these sections in more detail.

Section $2$: The main result is Proposition~\ref{lipschitz}. Then, we introduce 
a three variables Eisenstein series for $GL_3(\mathbb{Z})$ and recall some particular functions for later use.

Section $3$: We introduce our main object of study, the kernel (\ref{defikernel}). In the beginning of this section we show its first functional equation. Using Proposition~\ref{lipschitz} we prove Proposition~\ref{newrepresentation} in order to get a representation of the kernel as an infinite sum involving Selberg's Eisenstein and Poincaré series.
This new representation, Lemma~\ref{sum} and the first functional equation allow us to establish Proposition~\ref{cuspACWC}, where we obtain that the kernel is a Siegel cusp form for any $(s,w,u)$ on $A\cup C \cup WC \subset \mathbb{C}^3$.
Lemma~\ref{repreonD} is the analytic continuation of $(s,w,u) \mapsto {\Omega}_{k,s,w,u}(Z)$ to $D \subset \mathbb{C}^3$, this is a crucial result which extends the definition of our kernel to certain subsets of $\mathbb{C}^3$ with negative real parts, without loosing the connectivity with the rest of the domain. In Lemma~\ref{Secondfunctionalequationa} we prove a second functional equation for the kernel. In the chain of Lemmas \ref{asets}, \ref{Wsets} and \ref{aWD} we get the analytic continuation of the kernel to some specific regions of $\mathbb{C}^3$, using the two functional equations; the proofs of those lemmas are similar. In Lemma~\ref{thirdeq} we obtain a third functional equation for the kernel. After all this preliminary work, we combine these results and present the proof of Theorem~\ref{theoremanalyticcontinuation}. We end this section showing Proposition~\ref{group}.

Section \ref{SectionKM}: We study the twisted Koecher-Maass series of several variables associated with an arbitrary Siegel cusp form and finish the section with the proof of Theorem~\ref{teorema2}.

Section $5$: We combine Theorem~\ref{theoremanalyticcontinuation} and Theorem~\ref{teorema2} to obtain a new proof of the analytic properties of the twisted Koecher-Maass series stated in Proposition~\ref{application}.

\subsection{Notation}

If $R$ is a commutative ring with identity, we let $R^{n,n}$ be the ring of $n \times n$ matrices with coefficients in $R$. For any $Z \in \mathbb{C}^{n,n}$ we denote by $\tr(Z), \det(Z)$ and ${}^t\hspace{-0.02cm}{Z}$ the trace, determinant and transpose of $Z$ respectively, and define $e(Z)=e^{2\pi i \tr(Z)}$. We write $\Re(Z),\Im(Z) \in \mathbb{R}^{n,n}$ for the real and imaginary part of $Z$. We denote by $I_n$ the identity matrix of size $n$. If $A,B$ are some matrices of appropriate sizes, we put $A[B]={}^t\hspace{-0.02cm}BAB$. Everywhere in this work we let $k$ be an even integer and use the principal branch of the logarithm, i.e. $z^w = e^{w \log (z)}$ for all $w \in \mathbb{C}, z \in \mathbb{C} \setminus  (-\infty,0]$. We assume that the entries of a matrix $T \in \mathcal{J}$, $X \in \mathcal{S},$ $Y \in \mathcal{P}_3$ and $Z \in \mathcal{H}_3$ are
\vspace{-0.1cm}
 \begin{equation}\label{notation}
T=\left(\begin{smallmatrix}
     t_1&t_{12}&t_{13}\\
     t_{12}&t_2&t_{23}\\
     t_{13}&t_{23}&t_3\\
  \end{smallmatrix}\right)\hspace{-0.1cm}, \;
 X=\left(\begin{smallmatrix}
x_1&x_4&x_5\\ 
 x_4&x_2&x_6 \\ 
 x_5&x_6&x_3
\end{smallmatrix}\right)\hspace{-0.1cm}, \;
 Y=\left(\begin{smallmatrix}
y_1&y_4&y_5\\ 
 y_4&y_2&y_6 \\ 
 y_5&y_6&y_3
\end{smallmatrix}\right)\hspace{-0.1cm}, \;
 Z=\left(\begin{smallmatrix}
\tau_1&z_1&z_2\\ 
 z_1&\tau_2&z_3 \\ 
 z_2&z_3&\tau_3
\end{smallmatrix}\right)
\vspace{-0.1cm}
\end{equation}
with $t_1, t_2, t_3, 2t_{12}, 2t_{13}, 2t_{23} \in \mathbb{Z}$, $x_1,...,x_6 \in \mathbb{R}$, $y_1,...,y_6 \in \mathbb{R}$, $\tau_1, \tau_2, \tau_3 \in \mathcal{H}$ and $z_1,z_2,z_3 \in \mathbb{C}$.

\section{Preliminaries}\label{preliminaries}

\subsection{The power and gamma functions of three variables}

In \cite{Koh}, Kohnen and Sengupta prove the existence of a holomorphic function $h_n:\mathcal{H}_n \to \mathbb{C}$, for any positive integer $n$, which satisfies $e^{h_n(Z)}=\det Z$ for all $Z \in \mathcal{H}_n$. These functions are utilized to define our power function $p$ in (\ref{potencia}), which is related to the map studied by Terras in \cite{Ter2}.

The cone $\mathcal{P}_n$ is a $GL_n(\mathbb{R})$-set with the action $Y \mapsto Y[g]={}^t\hspace{-0.02cm}gYg$ for all $Y \in \mathcal{P}_n$, $g \in GL_n(\mathbb{R})$. For the entries $y_j$ of $Y \in \mathcal{P}_3$ as in (\ref{notation}), let $dy_j$ be the Lebesgue measure on $\mathbb{R}$. Then 
\begin{equation*}
d\mu(Y)=(\det Y)^{-2}dy_1 \cdot ... \cdot dy_6
\vspace{0.2cm}
\end{equation*}
is a $GL_3(\mathbb{R})$-invariant volume element on $\mathcal{P}_3$. We define the gamma function for $\mathcal{P}_3$
\vspace{-0.1cm}
\begin{equation*}
\Gamma_3(s,w,u)=\int_{Y \in \mathcal{P}_3}p_{s,w,u}(iY) \, e^{-\tr(Y)} \, d\mu(Y)
\vspace{-0.1cm}
\end{equation*}
where $s,w,u \in \mathbb{C}$. This integral converges if $\Re(s+w+u)>0,$ $\Re(w+u)>1/2$ and $\Re(u)>1$, and satisfies
\begin{equation}\label{Gamma3property}
\Gamma_3(s,w,u)=\pi^{3/2}\,e^{(s+2w+3u)\pi i/2}\,\Gamma(s+w+u)\,\Gamma(w+u-1/2)\,\Gamma(u-1)
\end{equation}
where $\Gamma$ is Euler's gamma function; in particular, $\Gamma_3$ has a meromorphic continuation to $\mathbb{C}^3$. For more details see \cite[p.~20,44,47]{Ter2}.

\vspace{0.2cm}
\begin{lemma}\label{int}
Let $(s,w,u,Z) \in \mathbb{C}^3 \times \mathcal{H}_3$ with $\Re(s+w+u)>0, \Re(w+u)>1/2$ and $\Re(u)>1$. Then
\vspace{-0.15cm}
\begin{equation*}
\int_{Y\in \mathcal{P}_3} p_{s,w,u}(iY)\, e(YZ)\,d\mu(Y)=(2\pi i)^{-s-2w-3u}\; \Gamma_3(s,w,u)\; p_{s,w,u}(-Z^{-1}).
\vspace{-0.05cm}
\end{equation*}
 \end{lemma}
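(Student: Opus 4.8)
The plan is to reduce the claim to the purely imaginary locus $Z=iV$, $V\in\mathcal P_3$, and then invoke analytic continuation in $Z$. Under the stated hypotheses both sides are holomorphic in $Z\in\mathcal H_3$: the integral on the left converges absolutely and locally uniformly because $\lvert e(YZ)\rvert=e^{-2\pi\tr(Y\Im Z)}$ with $\Im Z\in\mathcal P_3$, which matches the decay in the definition of $\Gamma_3$, so differentiation under the integral sign gives holomorphy; the right side is holomorphic since $Z\mapsto -Z^{-1}$ maps $\mathcal H_3$ into itself and $p_{s,w,u}$ is holomorphic. The set $\{iV:V\in\mathcal P_3\}=\{Z\in\mathcal H_3:\Re Z=0\}$ is a totally real submanifold of $\mathcal H_3\subset\mathbb C^6$ of maximal real dimension $6$; parametrizing a neighborhood of a base point $iV_0$ by $Z=i(V_0+\eta)$ with $\eta$ symmetric, one sees that a holomorphic function vanishing on the imaginary locus has vanishing Taylor expansion in $\eta$, hence vanishes near $iV_0$ and therefore on all of the connected set $\mathcal H_3$. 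Thus it suffices to prove the identity for $Z=iV$.

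It is convenient first to strip the branch factor from $p$. With the normalization of the $h_j$ on the imaginary axis that produces the factor $e^{(s+2w+3u)\pi i/2}$ in (\ref{Gamma3property}), one has for every $Y\in\mathcal P_3$ the relation $p_{s,w,u}(iY)=e^{(s+2w+3u)\pi i/2}\,\tilde p_{s,w,u}(Y)$, where $\tilde p_{s,w,u}(Y)=(\det Y_1)^{s}(\det Y_2)^{w}(\det Y_3)^{u}$ is the real power function built from principal real logarithms of the positive leading minors. This follows by continuing $h_j$ along $\theta\mapsto e^{i\theta}Y_j\in\mathcal H_j$ from $\theta=0^{+}$ to $\theta=\pi/2$, where $\log\det(e^{i\theta}Y_j)=ij\theta+\log\det Y_j$. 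Feeding this into the definition of $\Gamma_3$ gives $\Gamma_3(s,w,u)=e^{(s+2w+3u)\pi i/2}\,\tilde\Gamma_3(s,w,u)$ with $\tilde\Gamma_3=\int_{\mathcal P_3}\tilde p_{s,w,u}(Y)\,e^{-\tr Y}\,d\mu(Y)$.

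The main step is a change of variables on the cone. For $Z=iV$ we have $e(YZ)=e^{-2\pi\tr(YV)}$, so the left side equals $e^{(s+2w+3u)\pi i/2}\int_{\mathcal P_3}\tilde p_{s,w,u}(Y)\,e^{-2\pi\tr(YV)}\,d\mu(Y)$. I would choose, by Cholesky factorization, a real upper triangular $g$ with ${}^{t}g\,g=\tfrac1{2\pi}V^{-1}$, and substitute $Y=Y'[g]$. Because $g$ is upper triangular its corners satisfy $(Y'[g])_j=Y'_j[g_j]$, so $\det(Y'[g])_j=(\det g_j)^{2}\det Y'_j$ and hence $\tilde p_{s,w,u}(Y'[g])=\tilde p_{s,w,u}(Y')\prod_{j=1}^{3}(\det g_j)^{2s_j}$ with $(s_1,s_2,s_3)=(s,w,u)$; meanwhile $d\mu$ is $GL_3(\mathbb R)$-invariant and $2\pi\tr(YV)=\tr Y'$. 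The integral then collapses to $\tilde\Gamma_3(s,w,u)$ times $\prod_j(\det g_j)^{2s_j}$. Applying the corner identity once more to ${}^{t}g\,g$ gives $(\det g_j)^{2}=\det(({}^{t}g\,g)_j)=(2\pi)^{-j}\det((V^{-1})_j)$, and since $\sum_j j\,s_j=s+2w+3u$ this yields $\prod_j(\det g_j)^{2s_j}=(2\pi)^{-(s+2w+3u)}\tilde p_{s,w,u}(V^{-1})$.

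Collecting terms, the left side at $Z=iV$ becomes $(2\pi)^{-(s+2w+3u)}\tilde p_{s,w,u}(V^{-1})\,\Gamma_3(s,w,u)$, the factors $e^{\pm(s+2w+3u)\pi i/2}$ cancelling. To match the stated right side I would use $-Z^{-1}=iV^{-1}$, so that $p_{s,w,u}(-Z^{-1})=e^{(s+2w+3u)\pi i/2}\tilde p_{s,w,u}(V^{-1})$, together with $(2\pi i)^{-(s+2w+3u)}=(2\pi)^{-(s+2w+3u)}e^{-(s+2w+3u)\pi i/2}$ in the principal branch; their product reproduces exactly $(2\pi)^{-(s+2w+3u)}\tilde p_{s,w,u}(V^{-1})$, and the $\Gamma_3$ factors agree. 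This settles the identity on the imaginary locus, and the analytic continuation of the first paragraph completes the proof. I expect the delicate point throughout to be the consistent bookkeeping of the branches of the $h_j$ (equivalently the uniform factor $e^{(s+2w+3u)\pi i/2}$), and the verification that the leading corners of ${}^{t}g\,g$ produce precisely $\det((V^{-1})_j)$ rather than complementary minors of $V$.
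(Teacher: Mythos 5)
Your proposal is correct, but it follows a genuinely different route from the paper. The paper attacks the integral directly for an arbitrary $Z\in\mathcal{H}_3$: it writes $Y$ in Cholesky coordinates, splits the integral into three iterated one-dimensional Gaussian-type integrals $L_1,L_2,L_3$ with complex parameters, and evaluates each via $\int_0^\infty t^s e^{2\pi i\tau t^2}dt$ and $\int_{-\infty}^{\infty}e^{-\beta t^2+\gamma t}dt$, which forces a delicate verification that quantities like $\tau_2-z_3^2/\tau_3$ and $\det Z/(\tau_2\tau_3-z_3^2)$ lie in $\mathcal{H}$ and a careful tracking of logarithm branches; it then separately computes $h_j(-Z^{-1})$ and $p_{s,w,u}(-Z^{-1})$ explicitly and matches the two sides. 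You instead prove the identity only on the totally real locus $Z=iV$, where a single $GL_3(\mathbb{R})$ change of variables $Y\mapsto Y[g]$ (using the invariance of $d\mu$, the corner-compatibility $({}^t g Y g)_j={}^t g_j Y_j g_j$ of upper triangular $g$, and ${}^t g g=\tfrac{1}{2\pi}V^{-1}$) collapses the integral to the definition of $\Gamma_3$, and then you extend to all of $\mathcal{H}_3$ by holomorphy of both sides and the identity theorem for a maximal totally real submanifold. Your key branch input, $p_{s,w,u}(iY)=e^{(s+2w+3u)\pi i/2}(\det Y_1)^s(\det Y_2)^w(\det Y)^u$, is the same consequence of Lemma~1 of \cite{Koh} the paper uses, and all positivity makes the remaining bookkeeping trivial; so your argument trades the paper's complex Gaussian computations for a soft analytic-continuation step, which is cleaner and less error-prone. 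What the paper's heavier computation buys, however, is a set of explicit by-products --- the formulas (\ref{hs}), (\ref{pinverse}) and (\ref{ps2}) for $h_j(Z)$, $h_j(-Z^{-1})$ and $p_{s,w,u}(-Z^{-1})$ --- which are reused later (in Claim~\ref{claim1} and in the convergence estimates of Lemma~\ref{convergenceonA}), so under your approach those identities would still have to be established separately.
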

 
 \begin{proof} [Proof]
The proof has two parts: firstly, we split the integral in three pieces and compute them; secondly, we work out the right-hand side of the claimed equality.

For $Z=X+iY$ as in (\ref{notation}), the Cholesky descomposition of $Y$ is 
\vspace{-0.05cm}
\begin{equation}\label{factored}
   Y=\left(\begin{smallmatrix}
     t_1&0&0\\
     t_4&t_2&0\\
     t_5&t_6&t_3
   \end{smallmatrix}\right)
   \left(\begin{smallmatrix}
     t_1&t_4&t_5\\
     0&t_2&t_6\\
     0&0&t_3
   \end{smallmatrix}\right)
   =\left(\begin{smallmatrix}
     t_1^2&t_1t_4&t_1t_5\\
     t_1t_4&t_2^2+t_4^2&t_4t_5+t_2t_6\\
     t_1t_5&t_4t_5+t_2t_6&t_3^2+t_5^2+t_6^2
   \end{smallmatrix}\right).\vspace{-0.1cm}
   \end{equation}
By Lemma~1 of \cite{Koh}, the function $h_n$ satisfies $h_n(iY)=\frac{\pi in}{2}+$ $\log \det Y$ for all $Y \in \mathcal{P}_n$. Using this fact and (\ref{factored}) in (\ref{potencia}) one has 
\vspace{-0.05cm}
\begin{equation*}
 p_{s,w,u}(iY)=e^{(s+2w+3u)\pi i/2} \, t_1^{2(s+w+u)} \, t_2^{2(w+u)} \, t_3^{2u}.
\vspace{-0.1cm}
\end{equation*}
Using the last identity, the expression of $\tr(YZ)$ in terms of $t_j$ from (\ref{factored}) and $d\mu(Y)=8t_1^{-1}t_2^{-2}t_3^{-3}dt_1 \cdot ... \cdot dt_6$ one obtains
\vspace{-0.1cm}
\begin{equation}\label{II}
\begin{aligned}
\frac{e^{-(s+2w+3u)\pi i/2}}{8}&\int_{Y\in \mathcal{P}_3} p_{s,w,u}(iY)e(YZ)d\mu(Y)\\ 
&\hspace{-1cm}=\int_{\substack{t_1,t_2,t_3>0\\t_4,t_5,t_6 \in \mathbb{R}}}t_1^{2(s+w+u)-1}t_2^{2(w+u)-2}t_3^{2u-3}e(YZ)dt_1\cdot ... \cdot dt_6=L_1L_2L_3
\end{aligned}
\vspace{-0.1cm}
\end{equation}
where
\vspace{-0.1cm}
\begin{equation*}
\begin{aligned}
L_1&=\int_{0}^\infty t_3^{2u-3}e^{2\pi i\tau_3t_3^2}dt_3,\\
L_2&=\int_{t_2>0}t_2^{2(w+u)-2}\, e^{2\pi i \tau_2t_2^2}\int_{t_6 \in \mathbb{R}} e^{2\pi i(\tau_3t_6^2+2z_3t_2t_6)}dt_6dt_2, \text{ and }\\
L_3&=\int_{t_1>0}\hspace{-0.29cm}t_1^{2(s+w+u)-1}e^{2\pi i \tau_1t_1^2}\hspace{-0.11cm}\int_{t_4 \in \mathbb{R}}\hspace{-0.29cm}e^{2\pi i(\tau_2t_4^2+2z_1t_1t_4)}\hspace{-0.11cm}\int_{t_5 \in \mathbb{R}}\hspace{-0.29cm}e^{2\pi i (\tau_3t_5^2+2(z_2t_1+z_3t_4)t_5)}dt_5dt_4dt_1.
 \end{aligned}
\end{equation*}
 
Next we work out these integrals separately. Using
\vspace{-0.1cm}
\begin{equation}\label{cor1}
\int_0^\infty t^s e^{2\pi i \tau t^2}dt=\frac{1}{2}\left(-2 \pi i \tau\right)^{-\frac{s+1}{2}}\,\Gamma\left(\frac{s+1}{2}\right)
\end{equation}
for all $\tau \in \mathcal{H}, \Re(s)>-1$ and the identities
\vspace{-0.1cm}
 \begin{equation}\label{identity255}
 (\tau_1 \tau_2^{-1})^s=\tau_1^s \tau_2^{-s}, (-\tau_1)^s=e^{-s\pi i}\tau_1^s, z^s=(z^{-1})^{-s}
\vspace{-0.1cm}
\end{equation}
for all $\tau_1, \tau_2 \in \mathcal{H}, s\in \mathbb{C}$ and $z \in \mathbb{C}-]-\infty,0]$, we can deduce
\vspace{-0.1cm}
\begin{equation}\label{I_1}
 L_1=-\pi i e^{u\pi i} (2\pi i )^{-u} \tau_3^{1-u} \Gamma(u-1).
\vspace{-0.1cm}
\end{equation}
For the inner integral of $L_2$ we use
\vspace{-0.1cm}
\begin{equation}\label{identity4}
\int_{-\infty}^{\infty}e^{-\beta t^2+\gamma t}dt=\sqrt{\frac{\pi}{\beta}}\,e^{\gamma^2/4\beta} \quad \text{ for } \Re(\beta)>0
\vspace{-0.15cm}
\end{equation}
and (\ref{identity255}). Then
\vspace{-0.25cm}
\begin{equation*}
L_2=\frac{\sqrt{\pi} i}{\sqrt{2\pi i}\sqrt{\tau_3}} \int_{t_2>0} t_2^{2(w+u)-2}\,e^{2 \pi i(\tau_2-z_3^2/\tau_3)t_2^2}\,dt_2.
\vspace{-0.1cm}
\end{equation*}
For the computation of this expression we observe that
\vspace{-0.1cm}
\begin{equation*}
\abs{\tau_3}^2\;\;\Im\Bigl(\tau_2-\frac{z_3^2}{\tau_3}\Bigr)=(0,x_3,-x_6)\;Y \; {}^t\hspace{-0.02cm}(0,x_3,-x_6)+y_3 \det \bigl(\Im \left(\begin{smallmatrix}
     \tau_2&z_3\\
     z_3&\tau_3
   \end{smallmatrix}\right)\bigr)>0,
\vspace{-0.1cm}
\end{equation*}   
hence $\tau_2-\frac{z_3^2}{\tau_3} \in \mathcal{H}.$ Furthermore $\frac{\tau_3}{z_3^2-\tau_2\tau_3}=-\bigl(\tau_2-\frac{z_3^2}{\tau_3}\bigr)^{-1}\in \mathcal{H}$ and (\ref{identity255}) imply
\vspace{-0.1cm}
\begin{equation}\label{otheridentity}
\left(\frac{z_3^2-\tau_2\tau_3}{\tau_3}\right)^s=\frac{(z_3^2-\tau_2\tau_3)^s}{\tau_3^s} \ \text{ for all } s\in\mathbb{C}.
\vspace{-0.1cm}
\end{equation}   
Finally $\tau_2-\frac{z_3^2}{\tau_3} \in \mathcal{H}$, (\ref{cor1}) and (\ref{otheridentity}) yield
\vspace{-0.1cm}
\begin{equation}\label{I_2}
L_2=\frac{\sqrt{\pi} i}{2}\; (2\pi i)^{-w-u}\; \frac{(z_3^2-\tau_2\tau_3)^{1/2-w-u}}{\tau_3^{1-w-u}}\; \Gamma(w+u-1/2).
\vspace{-0.1cm}
\end{equation}
For $L_3$, we use (\ref{identity4}) twice in its inner integrals and (\ref{otheridentity}) with $s=1/2$ obtaining
\vspace{-0.1cm}
\begin{equation*}
L_3=\frac{1}{2\sqrt{z_3^2-\tau_2\tau_3}} \int_{t_1>0} t_1^{2(s+w+u)-1}\, e^{2\pi i\bigl(\tau_1-\frac{z_2^2}{\tau_3}-\frac{(\tau_3z_1-z_2z_3)^2}{\tau_3(\tau_2\tau_3-z_3^2)}\bigr)t_1^2}dt_1.
\vspace{-0.1cm}
\end{equation*}
From the inverse of $Z$
\vspace{-0.2cm}
\begin{equation}\label{-z-1}
-Z^{-1}=\frac{1}{\det Z}
\left(\begin{smallmatrix}
     z_3^2-\tau_2\tau_3&z_1\tau_3-z_2z_3&z_2\tau_2-z_1z_3\\
     z_1\tau_3-z_2z_3&z_2^2-\tau_1\tau_3&z_3\tau_1-z_1z_2\\
     z_2\tau_2-z_1z_3&z_3\tau_1-z_1z_2&z_1^2-\tau_1\tau_2
\end{smallmatrix}\right)
\vspace{-0.2cm}
\end{equation}
one sees
\vspace{-0.15cm}
\begin{equation*}
\tau_1-\frac{z_2^2}{\tau_3}-\frac{(\tau_3z_1-z_2z_3)^2}{\tau_3(\tau_2\tau_3-z_3^2)}=\frac{\det Z}{\tau_2\tau_3-z_3^2} ={-((-Z^{-1})_1)^{-1}} \in \mathcal{H}.
\end{equation*}
This last fact allow us to use (\ref{cor1}), which together with (\ref{identity255}) yield
\vspace{-0.1cm}
\begin{equation}\label{I_3}
L_3=\frac{1}{4} \frac{(2\pi i)^{-s-w-u}}{\sqrt{z_3^2-\tau_2\tau_3}} \;  \left(\frac{\det Z}{z_3^2-\tau_2\tau_3}\right)^{-s-w-u} \Gamma(s+w+u).
\vspace{-0.1cm}
\end{equation}

Next, we focus on the right-hand side of the equality of the Lemma. Since $h_n$ is the unique holomorphic function on $\mathcal{H}_n$ satisfying the properties in Lemma~1 of \cite{Koh}, we get\newpage
\vspace{-0.2cm}
\begin{equation}\label{hs}
 \begin{aligned}
 h_1(Z_1)&=\, \log \tau_1,\\
 h_2(Z_2)&=\, \log(-\det Z_2)+\pi i, \text{ and }\\
 h_3(Z)&=\, \log \frac{\det Z}{z_3^2-\tau_2\tau_3}+\log (z_3^2-\tau_2\tau_3)+2\pi i=3\pi i-h_3(-Z^{-1}).
 \end{aligned}
\vspace{-0.1cm}
 \end{equation}
\vspace{-0.1cm}
By (\ref{-z-1}) and $\det ({-Z^{-1}}_2)=\dfrac{\tau_3}{\det Z}$, the expressions in (\ref{hs}) for $-Z^{-1}$ are
\vspace{-0.1cm}
\begin{equation}\label{pinverse}
 \begin{aligned}
 h_1({-Z^{-1}}_1)&=\log \dfrac{z_3^2-\tau_2\tau_3}{\det Z}, \\
 h_2({-Z^{-1}}_2)&=\log \frac{-\tau_3}{\det Z}+\pi i, \text{ and }\\
 h_3(-Z^{-1})&= -\log \frac{\det Z}{z_3^2-\tau_2\tau_3}-\log (z_3^2-\tau_2\tau_3)+\pi i.
\end{aligned}
\vspace{-0.2cm}
 \end{equation}
Using these expressions in (\ref{potencia}) and the identity $e^{w\pi i} \left(\frac{-\tau_3}{\det Z}\right)^{w}\hspace{-0.05cm}=\hspace{-0.05cm}\frac{\tau_3^{w}}{(z_3^2-\tau_2\tau_3)^{w}} \left(\frac{z_3^2-\tau_2\tau_3}{\det Z}\right)^{w}$ (recall $\frac{\det Z}{\tau_2\tau_3-z_3^2}, \frac{\tau_3}{z_3^2-\tau_2\tau_3} \in \mathcal{H}$, (\ref{identity255}),(\ref{otheridentity})) one obtains
\vspace{-0.25cm}
\begin{equation}\label{ps2}
p_{s,w,u}(-Z^{-1})=\frac{e^{u \pi i}\tau_3^{w}}{(z_3^2-\tau_2\tau_3)^{w+u}}\left(\frac{\det Z}{z_3^2-\tau_2\tau_3}\right)^{-s-w-u}.
\vspace{-0.05cm}
\end{equation}
Considering (\ref{Gamma3property}), (\ref{II}), (\ref{I_1}), (\ref{I_2}), (\ref{I_3}) and (\ref{ps2}) we obtain the lemma.
\end{proof}

Consider the vector space $\mathcal{S}$ and the lattices $\mathscr{L},\mathscr{L^*}$, dual to each other, defined in subsection \ref{statements}. For any $Z \in \mathcal{H}_3$, the series $\sum_{B\in \mathscr{L^*}}p_{-s,-w,-u}(Z+B)$ 
is absolutely convergent in the region given by $\Re(s)>1, \, \Re(w)>3$ and $\Re(u)>4$. In order to see this, we use (\ref{hs}) and $Z$ as in (\ref{notation}) obtaining
\vspace{-0.1cm}
\begin{equation*}
\begin{aligned}
 \displaystyle\sum_{B\in \mathscr{L^*}} \abs{p_{-s,-w,-u}&(Z+B)} \\[-0.6em]
 \leq & \abs{e^{-\pi i w}} \displaystyle\sum_{C \in \mathscr{L}^*_2} \abs{ (\tau_1+C_{11})^{-s} } \abs{ (-\det (Z_2+C))^{-w}} \displaystyle\sum_{D \in \mathscr{L}^*}\abs{\det(Z+D)^{-u}}
\end{aligned}
\vspace{-0.15cm}
\end{equation*}
where $\mathscr{L}^*_2=\left\{\left(\begin{smallmatrix}
     a&b\\
     b&c
     \end{smallmatrix}\right)/ \ a,b,c \in \mathbb{Z} \right\}$. The first series on the right-hand side converges with $\Re(s)>1, \Re(w)>3$ (see \cite[p.~9]{Mar2}) and the second one converges for $\Re(u)>4$ (see the series $F_k(Z,u)$ in \cite[p.~152]{Koh}). 
     
Next, we fix the notation
\vspace{0.15cm}
\begin{equation}\label{W}
W=\left(\begin{smallmatrix}
     0&0&1\\
     0&1&0\\
     1&0&0\\
 \end{smallmatrix}\right)
\vspace{0.2cm} \end{equation}
and prove a generalization of a property shown by Terras in \cite[p.~45]{Ter2}.

\vspace{0.2cm}
\begin{claim}\label{claim1}For any $(s_1,s_2,s_3,Z) \in \mathbb{C}^3 \times \mathcal{H}_3$, the power function satisfies
\vspace{-0.1cm}
\begin{equation*}
p_{s_1,s_2,s_3}(-Z^{-1})=e^{(s_1+2s_2+3s_3)\pi i} \; p_{s_2,s_1,-s_1-s_2-s_3}(Z[W]).
\vspace{-0.1cm}
\end{equation*}
\end{claim}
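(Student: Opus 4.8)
The plan is to reduce the identity to a comparison of exponents. Recall $W$ from \eqref{W}: it is symmetric with $W^2=I_3$ and $\det W=-1$, so $Z[W]={}^tWZW=WZW$ is obtained from $Z$ by reversing the order of the indices, i.e.\ $(Z[W])_{ij}=Z_{4-i,4-j}$. Hence the leading principal minors of $Z[W]$ are the determinants of the trailing corners of $Z$, namely $\det (Z[W])_1=\tau_3$, $\det (Z[W])_2=\tau_2\tau_3-z_3^2$ and $\det Z[W]=\det Z$. Both sides of the asserted equality are exponentials of expressions that are homogeneous and linear in $(s_1,s_2,s_3)$; it therefore suffices to show that the two exponents coincide, and since a homogeneous linear form is determined by its coefficients this is equivalent to the three scalar identities
\begin{align*}
h_1((-Z^{-1})_1)&=h_2((Z[W])_2)-h_3(Z[W])+\pi i,\\
h_2((-Z^{-1})_2)&=h_1((Z[W])_1)-h_3(Z[W])+2\pi i,\\
h_3(-Z^{-1})&=3\pi i-h_3(Z[W]),
\end{align*}
obtained by matching the coefficients of $s_1$, $s_2$ and $s_3$ respectively.

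The first step I would carry out is to prove the auxiliary invariance $h_3(Z[W])=h_3(Z)$ on all of $\mathcal{H}_3$. Both $Z\mapsto h_3(Z[W])$ and $Z\mapsto h_3(Z)$ are holomorphic on the connected domain $\mathcal{H}_3$, and since $\det Z[W]=\det Z$ each exponentiates to $\det Z$; thus their difference is a constant in $2\pi i\,\mathbb{Z}$. Evaluating at the fixed point $Z=iI_3$, for which $(iI_3)[W]=iW^2=iI_3$, forces this constant to vanish. Combined with the relation $h_3(-Z^{-1})=3\pi i-h_3(Z)$ recorded in \eqref{hs}, the invariance gives the third identity at once.

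For the remaining two identities I would substitute the explicit formulas already available in the excerpt: \eqref{pinverse} for the left-hand sides $h_1((-Z^{-1})_1)$ and $h_2((-Z^{-1})_2)$, and the general expressions \eqref{hs} for $h_1$ and $h_2$ evaluated at the corners of $Z[W]$ (whose determinants were listed above) for the right-hand sides, rewriting $h_3(Z[W])=h_3(Z)$ through \eqref{hs}. After cancellation the first identity collapses to $\log\tfrac{z_3^2-\tau_2\tau_3}{\det Z}=-\log\tfrac{\det Z}{z_3^2-\tau_2\tau_3}$, while the second reduces to an identity among $\log\tau_3$, $\log\tfrac{-\tau_3}{\det Z}$ and the split logarithm of $\det Z$.

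The crux, and the only real obstacle, is the consistent bookkeeping of branches of the principal logarithm: each of the reductions above is valid only on the correct branch. To secure them I would reuse the half-plane membership facts established in the proof of Lemma~\ref{int}—in particular that $\tfrac{\det Z}{\tau_2\tau_3-z_3^2}$ and $\tau_2-\tfrac{z_3^2}{\tau_3}$ lie in $\mathcal{H}$—together with the elementary branch rules \eqref{identity255} and \eqref{otheridentity}, which justify $\log(1/w)=-\log w$ and the splitting of a product under the logarithm in precisely the configurations that occur here. With these in hand the three scalar identities follow by a direct computation, and reassembling the coefficients of $s_1,s_2,s_3$ completes the proof.
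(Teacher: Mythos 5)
Your proposal is correct and follows essentially the same route as the paper's proof: both sides are expanded through the explicit formulas (\ref{hs}) and (\ref{pinverse}), the invariance $h_3(Z[W])=h_3(Z)$ is invoked, and the branch bookkeeping is settled by (\ref{identity255}), (\ref{otheridentity}) and the half-plane memberships from the proof of Lemma~\ref{int}; your matching of the coefficients of $s_1,s_2,s_3$ is just the logarithmic form of the paper's direct manipulation of the exponentials. The only substantive addition is that you prove $h_3(Z[W])=h_3(Z)$ yourself (holomorphy on the connected domain $\mathcal{H}_3$, common exponential $\det Z$, evaluation at the $W$-fixed point $iI_3$), whereas the paper simply cites Kohnen--Sengupta for this fact.
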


\begin{proof} [Proof] 
By (\ref{hs}) and (\ref{pinverse}) one initially has
\vspace{-0.25cm}
\begin{equation*}
p_{s_1,s_2,s_3}(-Z^{-1})=e^{(s_2+3s_3)\pi i} \, e^{s_1\log(\frac{z_3^2-\tau_2\tau_3}{\det Z})} \, e^{s_2 \log(\frac{-\tau_3}{\det Z})} \, e^{-s_3h_3(Z)};
\vspace{-0.1cm}
\end{equation*}
and from (\ref{hs}) and $h_3(Z)=\log(-\frac{\det Z}{\tau_3})+\log \tau_3+\pi i$ (which is obtained in the same way as the identities in (\ref{hs})) one concludes
\vspace{-0.1cm}
\begin{equation*}\label{p1}
p_{s_1,s_2,s_3}(-Z^{-1})=e^{(2s_1+2s_2+3s_3)\pi i} \, e^{s_1\log(z_3^2-\tau_2\tau_3)} \, e^{s_2\log \tau_3} \, e^{-(s_1+s_2+s_3)h_3(Z)}.
\vspace{-0.1cm}
\end{equation*}
On the other hand, the right-hand side of Claim~\ref{claim1} is computed using (\ref{potencia}) for $Z[W]$, (\ref{hs}) and $h_3(Z[W])=h_3(Z)$ (see \cite[p.~151]{Koh}). Hence, the claim is proven.
\end{proof}

\begin{proof} [Proof\nopunct] {\it of Proposition~\ref{lipschitz}.}
Since this proof is similar to the one of Lemma~2 in \cite[p.~9]{Mar2}, except Claim~\ref{claim1}, we omit the details. By Lemma~\ref{int} and Claim~\ref{claim1}, the Fourier transform of $\phi:\mathcal{S} \to \mathbb{C}$ defined as $\phi(X)=p_{-w,-s,s+w+u-2}(iX)e(XZ)$ if $X \in \mathcal{P}_3$ and $\phi(X)=0$ if $X \in \mathcal{S}\setminus \mathcal{P}_3$, is the map
\vspace{-0.1cm}
\begin{equation*}
\widehat{\phi}(Y)=-\frac{\Gamma_3(-w,-s,s+w+u)}{(-2\pi i)^{s+2w+3u}}\;p_{-s,-w,-u}((Z+Y)[W]).
\vspace{-0.1cm}
\end{equation*}Poisson's summation formula applied to $\phi$ and lattice $\mathscr{L}$ yields Proposition~\ref{lipschitz}.
\end{proof}

\subsection{Siegel cusp forms of degree three\nopunct}\label{cusp}

Every matrix $M=\left(\begin{smallmatrix}
     A&B\\
     C&D
    \end{smallmatrix}\right)$
in $Sp_3(\mathbb{Z})$ acts on $\mathcal{H}_3$ via $Z \mapsto MZ=(AZ+B)(CZ+D)^{-1}$. This left action induces the right $Sp_3(\mathbb{Z})$-action
\vspace{-0.15cm}
\begin{equation}\label{j}
f|_k\,[M](Z)=j(M,Z)^{-k}\, f(MZ)
\vspace{-0.1cm}
\end{equation}
on the set of functions $f:$ $\mathcal{H}_3$ $\to \mathbb{C}$, where $j(M,Z)=\det(CZ+D)$.

\vspace{0.2cm}
\begin{definition}\label{defcusp}
A Siegel modular form of degree $3$ and weight $k$ is a holomorphic function $f:\mathcal{H}_3$ $\to \mathbb{C}$ such that $f|_k\,[M]=f$ for all $M \in Sp_3(\mathbb{Z})$. If $f$ has a Fourier series expansion as (\ref{fourier}), we call it a Siegel cusp form.
\end{definition}

The Petersson inner product on the space $\mathfrak{S}_{3,k}$ is
\vspace{-0.1cm}
\begin{equation*}
\langle f, g \rangle=\int_{Sp_3(\mathbb{Z}) \backslash \mathcal{H}_3} f(Z) \overline{g(Z)} (\det Y)^k dV(Z),
\vspace{-0.3cm}
\end{equation*}
where
\vspace{-0.05cm}
\begin{equation*}
dV(Z)=\det(Y)^{-4}dXdY \text{ and } dXdY=\prod\limits_{j=1}^3 d\sigma_jdt_jdx_jdy_j
\vspace{-0.05cm}
\end{equation*}
for $Z=X+iY$ as in (\ref{notation}) with $\tau_j=\sigma_j+it_j, \, z_j=x_j+iy_j$.

Let $\{A_T\}_{T \in \mathcal{J}}$ be the set of Fourier coefficients of any $f \in \mathfrak{S}_{3,k}$ as in (\ref{fourier}). Since $k$ is fixed, the Fourier coefficients satisfy $A_T = A_{T[U]}$ for all $T \in \mathcal{J}, \, U \in GL_3(\mathbb{Z})$. In $\mathfrak{S}_{3,k}$, an example is the Poincaré series $P_{k,T}$ indexed by $T \in \mathcal{J},$ where $k>6$. Namely,
\vspace{-0.08cm}
\begin{equation*}
P_{k,T}(Z)=2^{-1}\sum_{M \in \mathcal{N} \backslash Sp_3(\mathbb{Z})} e(TZ)|_k[M]
\vspace{-0.12cm}
\end{equation*}
where $\mathcal{N}=\big\{ \bigl(\begin{smallmatrix}
  I_3& B \\ 
 0& I_3 
\end{smallmatrix}\bigr) / \; {}^t\hspace{-0.02cm}B=B \in \mathbb{Z}^{3,3} \big\} \subset Sp_3(\mathbb{Z}).$ It satisfies the identity
\vspace{-0.15cm}
\begin{equation}\label{p,f*}
\langle f, P_{k,T} \rangle=\pi^{3/2} \, (4\pi)^{6-3k} \, \Gamma(k-2) \, \Gamma(k-5/2) \, \Gamma(k-3) \, (\det T)^{2-k} \, A_T
\vspace{-0.1cm}
\end{equation}
(\cite[p.~90]{Kli}). Let $\mathcal{F}$ be the standard Siegel $Sp_3(\mathbb{Z})$-fundamental domain in $\mathcal{H}_3$ (\cite[p.~29]{Kli}).

\vspace{0.2cm}
\begin{lemma}\label{poincarebounded}
Let $k>6$ and $K$ a compact subset of $\mathcal{F}$. Then, there exists a positive constant $C_{K,k}$ such that
\vspace{-0.05cm}
\begin{equation*}
\abs[\big]{P_{k,T}(Z)} \leq C_{K,k} \, (\det 2T)^{2-k/2} \text{ for all }T \in \mathcal{J}, Z \in K. 
\vspace{-0.05cm}
\end{equation*}
\end{lemma}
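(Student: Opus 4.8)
The plan is to dominate the defining series term-by-term, rewrite each term through the invariant quantity $\Im(MZ)$, and then compare the resulting coset sum with an invariant integral that evaluates to the claimed power of $\det T$ in closed form. By the triangle inequality applied to the definition of $P_{k,T}$,
\[
\abs[\big]{P_{k,T}(Z)}\le\tfrac12\sum_{M\in\mathcal{N}\backslash Sp_3(\mathbb{Z})}\abs{j(M,Z)}^{-k}\,e^{-2\pi\tr(T\,\Im(MZ))},
\]
where I use $\abs{e(T\,MZ)}=e^{-2\pi\tr(T\,\Im(MZ))}$ and the fact that $\Im(MZ)$, hence each summand, depends only on the coset $\mathcal{N}M$. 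Invoking the standard identity $\det\Im(MZ)=\det Y\,\abs{j(M,Z)}^{-2}$ I rewrite $\abs{j(M,Z)}^{-k}=(\det Y)^{-k/2}(\det\Im(MZ))^{k/2}$, so that each term equals $(\det Y)^{-k/2}\Phi(\Im(MZ))$ with $\Phi(V):=(\det V)^{k/2}e^{-2\pi\tr(TV)}$ for $V\in\mathcal{P}_3$. On the compact set $K\subset\mathcal{F}$ the factor $(\det Y)^{-k/2}$ is bounded by a constant depending only on $K,k$, so it suffices to bound $\Sigma(T,Z):=\sum_{M\in\mathcal{N}\backslash Sp_3(\mathbb{Z})}\Phi(\Im(MZ))$.

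Since $\Phi\circ\Im$ is invariant under $Z'\mapsto Z'+B$ with $B$ integral symmetric, $\Sigma(T,Z)$ is a sum of an $\mathcal{N}(\mathbb{Z})$-periodic function over the orbit of $Z$ in $\mathcal{N}(\mathbb{Z})\backslash\mathcal{H}_3$, and I would compare it with
\[
J:=\int_{\mathcal{N}(\mathbb{Z})\backslash\mathcal{H}_3}\Phi(\Im(Z'))\,dV(Z')=\int_{\mathcal{P}_3}(\det Y)^{k/2-4}\,e^{-2\pi\tr(TY)}\,dY,
\]
the last equality because $\Phi$ is independent of $\Re(Z')$ and $dV(Z')=(\det Y)^{-4}dX\,dY$ with $X$ ranging over a unit cube. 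This is a matrix-variate gamma integral of exactly the type producing $\Gamma_3$ in \eqref{Gamma3property}: it converges precisely when $k/2-2>1$, that is for $k>6$, and equals $c(k)\,(\det 2\pi T)^{-(k/2-2)}=c'(k)\,(\det T)^{2-k/2}$. As $\det 2T=8\det T$, this is the target order of magnitude, and the convergence condition $k>6$ matches the hypothesis of the lemma.

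To pass from the heuristic $\Sigma\approx J$ to the inequality $\Sigma(T,Z)\le C_{K,k}(\det 2T)^{2-k/2}$ I would use a geometric majorization. Fix a small invariant neighborhood $\mathcal{U}$ of $K$; because $Sp_3(\mathbb{Z})$ acts properly discontinuously, the orbit of each $Z\in K$ has injectivity radius bounded below uniformly on the compact set $K$ (reduction theory), so the translates of $\mathcal{U}$ along the orbit have uniformly bounded overlap in $\mathcal{N}(\mathbb{Z})\backslash\mathcal{H}_3$. Hence each $\Phi(\Im(MZ))$ is controlled by $\operatorname{vol}(\mathcal{U})^{-1}$ times the integral of $\Phi\circ\Im$ over the corresponding translate, and summing yields $\Sigma(T,Z)\le C_K\,\widetilde J$, where $\widetilde J$ is the analogue of $J$ with $\Phi$ replaced by its supremum over $\mathcal{U}$-neighborhoods. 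Multiplying by the bounded factor $(\det Y)^{-k/2}$ then gives the stated bound.

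The main obstacle will be this last majorization, and specifically its uniformity in $T$. Enlarging $V=\Im(Z')$ over $\mathcal{U}$ multiplies $\tr(TV)$ by a factor $1+O(\delta)$, so $\widetilde J$ behaves like $J$ with $T$ rescaled to $(1-O(\delta))T$; one must check that this only replaces $(\det T)^{2-k/2}$ by $(1-O(\delta))^{3(2-k/2)}(\det T)^{2-k/2}$, i.e.\ alters the constant but not the exponent, and that the overlap bound is uniform over all $T\in\mathcal{J}$ and $Z\in K$. Once this is verified, collecting $(\det Y)^{-k/2}$, the constant $c'(k)$, and the comparison constant produces $\abs[\big]{P_{k,T}(Z)}\le C_{K,k}(\det 2T)^{2-k/2}$ for all $T\in\mathcal{J}$ and $Z\in K$, as claimed.
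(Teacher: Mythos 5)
Your proposal is sound, and it is a genuinely different argument from the one the paper relies on. The paper does not actually write out a proof: it defers to Martin's degree-two lemma in \cite{Mar2}, whose argument is algebraic rather than geometric. There one expands $P_{k,T}$ in an orthonormal basis $f_1,\dots,f_d$ of the finite-dimensional space of cusp forms, computes $\langle P_{k,T},f_j\rangle$ from the Petersson formula (the degree-three version is (\ref{p,f*}), giving $c_k(\det T)^{2-k}\overline{A_{f_j}(T)}$), and then applies Hecke's estimate $A_{f_j}(T)=\mathcal{O}((\det T)^{k/2})$ together with the boundedness of each $f_j$ on $K$; the exponent arises as $(2-k)+k/2=2-k/2$, and this is why the extension to arbitrary degree is immediate. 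Your route --- termwise domination plus comparison with the unfolded gamma integral $\int_{\mathcal{P}_3}(\det Y)^{k/2-4}e^{-2\pi\tr(TY)}\,dY=c_k'\,(\det T)^{2-k/2}$ --- trades that machinery for a packing argument; what it buys is self-containedness (no finite-dimensionality, no Petersson formula, no Hecke bound) and a structural explanation of both the exponent and the hypothesis $k>6$, which is exactly the convergence condition of that integral. Note also that your detour through an integral is genuinely necessary: any termwise majorant built only from $\det\Im(MZ)$ and $\det T$ is constant on the infinite $G_{0,3}$-orbits of cosets in $\mathcal{N}\backslash Sp_3(\mathbb{Z})$, so it cannot converge; retaining the full trace decay $\tr(T\,\Im(MZ))$, as your comparison does, is what saves the argument.

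The step you flag as the main obstacle does hold, uniformly in $T$, so the proof closes. Take for $\mathcal{U}$ the ball of radius $\delta$ in the $Sp_3(\mathbb{R})$-invariant metric of $\mathcal{H}_3$. That metric dominates the $GL_3(\mathbb{R})$-invariant metric of $\mathcal{P}_3$ under $Z'\mapsto\Im(Z')$, so any $W$ at invariant distance at most $\delta$ from $MZ$ satisfies $e^{-\delta}\,\Im(MZ)\le\Im(W)\le e^{\delta}\,\Im(MZ)$ in the Loewner order; hence $\tr(T\,\Im W)\ge e^{-\delta}\tr(T\,\Im(MZ))$ for every $T\in\mathcal{J}$ simultaneously, and $\det\Im W$ is comparable to $\det\Im(MZ)$ with constants depending only on $\delta$ and $k$. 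Averaging over $M\mathcal{U}$ (all these balls have the same invariant volume) and using bounded overlap --- which follows from proper discontinuity, is uniform on $K$ and independent of $T$, and survives at elliptic fixed points as bounded multiplicity even though the injectivity radius degenerates there --- yields $\Sigma(T,Z)\le C_{K,k,\delta}\int_{\mathcal{P}_3}(\det Y)^{k/2-4}e^{-2\pi e^{-\delta}\tr(TY)}\,dY$. Rescaling $Y$ shows that replacing $T$ by $e^{-\delta}T$ multiplies the value by $e^{3\delta(k/2-2)}$ and never touches the exponent $2-k/2$. With these two points made precise, your argument is a complete and correct proof.
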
The proof of this result is a simple generalization of the analogous statement for Poincaré series of degree two in \cite[p.~11]{Mar2}. In fact, such a proof can be extended to Poincaré series of degree $n$, for any $n$.

\subsection{Selberg’s Eisenstein series\nopunct}\label{defSelberg}

We denote by $P$ to the minimal parabolic subgroup of $GL_3(\mathbb{Z})$ i.e., $P$ is the set of all upper triangular matrices in $GL_3(\mathbb{Z})$. For $Y \in \mathcal{P}_3\text{ and }(s,w,u) \in \mathbb{C}^3$ we define Selberg’s Eisenstein series associated with $P$
\vspace{-0.02cm}
\begin{equation}\label{def eis}
E(Y| s,w,u)=e^{(s+2w+3u)\pi i/2}\sum_{\gamma \in GL_3(\mathbb{Z})/P} p_{-s,-w,-u}(iY[\gamma]).
\vspace{-0.05cm}
\end{equation}
Let $Y$ be an arbitrary element in $\mathcal{P}_3$. For $(s,w,u)\in \mathbb{C}^3$ with $\Re(s)>1$ and $\Re(w)>1$, the series (\ref{def eis}) is absolutely convergent and defines a holomorphic function in such a region. Furthermore, Selberg’s Eisenstein series has a meromorphic continuation to $\mathbb{C}^3$ and satisfies some functional equations; one of them is
\vspace{-0.02cm}
\begin{equation*}
\xi(2w)E(Y| w,s,-s-w-u+2) = \xi(2-2w)E(Y| 1-w,s+w-1/2,-s-w-u+2),
\vspace{-0.02cm}
\end{equation*}
where $\xi(2s)=\pi^{-s}\Gamma(s)\zeta(2s).$
These facts can be seen in \cite{Ter2}, Section $1.5.1$. Clearly $E(Y[U]|s,w,u)=E(Y|s,w,u)$ for all $U \in GL_3(\mathbb{Z})$ and
\vspace{-0.02cm}
\begin{equation}\label{detE}
E(Y| s,w,u)(\det Y)^q=e^{q \pi i/2} E(Y| s,w,u-q) \text{ for any }q \in \mathbb{C}. 
\end{equation}

\subsection{The Eisenstein series, the Epstein zeta function and the real analytic Eisenstein series\nopunct}\label{eisenstein}

For later purposes we recall here some classical functions.

The space $\mathcal{SP}_2$ is the subset of determinant $1$ matrices in $\mathcal{P}_2$, it is a right $SL_2(\mathbb{Z})$-set via $Y \mapsto Y[\gamma]={}^t\hspace{-0.02cm}\gamma Y \gamma$. The set $\mathcal{H}=\mathcal{H}_1$ is a $SL_2(\mathbb{Z})$-space with the standard modular action. If $\tau \in \mathcal{H}$, we write in this subsection $\tau=\sigma+ti$. These $SL_2(\mathbb{Z})$-spaces $\mathcal{H}$ and $\mathcal{SP}_2$ can be identified using the bijection
\vspace{-0.05cm}
\begin{equation}\label{identification}
\tau \mapsto W_{\tau}=\left(\begin{smallmatrix}
     1/t&-\sigma/t\\
     -\sigma/t& (\sigma^2+t^2)/t\\
 \end{smallmatrix}\right), \text{ or its inverse map }Y=\left(\begin{smallmatrix}
     y_1&y\\
     y& y_2\\
 \end{smallmatrix}\right) \mapsto -\frac{y}{y_1}+\frac{1}{y_1}i.
\vspace{-0.05cm}
\end{equation}
Let $\Gamma_{\infty}=\{\bigl(\begin{smallmatrix}
     \pm 1&m\\
     0&\pm 1\\
 \end{smallmatrix}\bigr) / m\in \mathbb{Z}\} \subset SL_2(\mathbb{Z})$. The Eisenstein series $E_s(\tau)$ and the Epstein zeta function $\mathcal{Z}(Y,s)$ are complex-valued maps on $\mathcal{H}$ and $\mathcal{P}_2$ respectively, given by
\vspace{-0.05cm}
\begin{equation*}
E_s(\tau)=\sum_{\gamma \in \Gamma_{\infty} \backslash SL_2(\mathbb{Z})}\Im(\gamma \tau)^s \;\; \text{ and } \;\; \mathcal{Z}(Y,s)=2^{-1}\sum_{v \in \mathbb{Z}^2-0} {Y[v]}^{-s}
\vspace{-0.1cm}
\end{equation*}
for any $s \in \mathbb{C}, \Re(s)>1$. It is not difficult to show $\zeta(2s)E_s(\tau)=\mathcal{Z}(W_{\tau},s)$ (see for instance \cite[p.~260]{Ter1}).

For $s\in \mathbb{C}, \Re(s)>1$ and $\tau \in \mathcal{H}$ we consider the real analytic Eisenstein series
\vspace{-0.1cm}
\begin{equation*}
\zeta_{\mathbb{Z}^2}(s,\tau)=\sum_{(a,c)\in \mathbb{Z}^2-0} \abs{a+c\tau}^{-2s}.
\vspace{-0.2cm}
\end{equation*}
A straightforward computation shows
\vspace{-0.1cm}
\begin{equation*}
2\mathcal{Z}(Y,s)=\left(\dfrac{\Im(\tau_Y)}{(\det Y)^{1/2}}\right)^s \zeta_{\mathbb{Z}^2}(s,\tau_Y)
\vspace{-0.15cm}
\end{equation*}
with $\tau_Y=\frac{y}{y_1}+\frac{(\det Y)^{1/2}}{y_1}i \ $ for any $Y=\left(\begin{smallmatrix}
     y_1&y\\
     y& y_2\\
 \end{smallmatrix}\right) \in \mathcal{P}_2$ (see for instance \cite[p.~20]{Mar2}). Now, we introduce the function $\zeta^*_{\mathbb{Z}^2}(s,\tau)$ by writing
\vspace{-0.1cm}
\begin{equation*}
\zeta_{\mathbb{Z}^2}(s,\tau)=2\zeta(2s)+\frac{2\pi^{1/2}\Gamma(s-1/2)\zeta(2s-1)}{\Gamma(s)} t^{1-2s}+2\zeta^*_{\mathbb{Z}^2}(s,\tau).
\vspace{-0.1cm}
\end{equation*}
For any fixed $\tau$, the right-hand side of this expression admits a meromorphic continuation in the variable $s$ to the complex plane with a unique singularity, a simple pole at $s=1$. The analytic continuation of the map $\zeta^*_{\mathbb{Z}^2}$ is entire. In the literature  (see \cite[p.~258,259]{Kat}) one finds the estimate
\vspace{-0.1cm}
\begin{equation}\label{zeta*}
\zeta^*_{\mathbb{Z}^2}(s,\tau)=\mathcal{O}\left((\abs{\Im(s)}+1)^{\max\{1,2-2\Re(s)\}+\varepsilon}\;t^{-\max\{1,2\Re(s)\}-\varepsilon}\right)
\vspace{-0.1cm}
\end{equation}
for all $s\in \mathbb{C}$ and $\varepsilon>0$, whenever $t\geq t_0>0$ with $t_0$ an arbitrary positive constant.

\section{Integral kernel}\label{sectionintegralkernel}
We start this section defining the main object of study in our work, the kernel ${\Omega}_{k,s,w,u}(Z)$. Let $G$ be the group
\vspace{-0.15cm}
\begin{equation*}
G=\left\{\left(\begin{smallmatrix}
     S&0\\
     0&{}^t\hspace{-0.02cm}{S}^{-1}\\
 \end{smallmatrix}\right) / \ S=\left(\begin{smallmatrix}
     \pm1&0&0\\
     n&\pm1&0\\
     m&p&\pm1
 \end{smallmatrix}\right) \in GL_3(\mathbb{Z}) \right\}  \subset Sp_3(\mathbb{Z}).
 \vspace{-0.1cm}
\end{equation*}
\begin{definition}\label{defiintegral}
For $s,w,u$ in $\mathbb{C}$ and $Z$ in $\mathcal{H}_3$, let
\vspace{-0.1cm}
\begin{equation}\label{omega}
{\Omega}_{k,s,w,u}(Z)=\sum_{M \in G \backslash Sp_3(\mathbb{Z})}p_{-s,-w,-u}(Z)|_k[M].
\vspace{-0.1cm}
\end{equation}
\end{definition}

Before establishing a convergence region for (\ref{omega}), we introduce a claim.

\vspace{0.2cm}
\begin{claim}\label{Claim2}
   Let $Z=X+iY \in \mathcal{H}_3$, then $\abs{\det Z_j}\geq \det Y_j$ for $j=1,2$.
\end{claim}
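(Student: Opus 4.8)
The plan is to prove the stronger and cleaner statement that $\abs{\det Z_j} \geq \det Y_j$ holds for every $j$ for which $Z_j$ is a point of $\mathcal{H}_j$; this covers the required cases $j=1,2$ (and in fact $j=3$) uniformly. The first thing I would record is that each $Z_j$ does lie in $\mathcal{H}_j$. Writing $Z_j = X_j + i Y_j$ with $X_j = \Re(Z_j)$ and $Y_j = \Im(Z_j)$, the matrix $Y_j$ is precisely the $j \times j$ upper-left corner of $Y \in \mathcal{P}_3$; since a principal submatrix of a positive-definite matrix is again positive definite, $Y_j \in \mathcal{P}_j$. Consequently $Y_j$ admits a symmetric positive-definite square root $Y_j^{1/2}$, which is the tool that drives the whole argument.

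The key step is the factorization
\begin{equation*}
Z_j = Y_j^{1/2}\bigl(S_j + i I_j\bigr) Y_j^{1/2}, \qquad S_j := Y_j^{-1/2} X_j Y_j^{-1/2},
\end{equation*}
which follows at once from $Y_j^{1/2} Y_j^{1/2} = Y_j$. Because $X_j$ and $Y_j^{-1/2}$ are real symmetric, so is $S_j$, and hence $S_j$ has only real eigenvalues $\lambda_1,\dots,\lambda_j$. Taking determinants in the factorization gives $\det Z_j = (\det Y_j)\,\det(S_j + i I_j)$, and diagonalizing $S_j$ yields
\begin{equation*}
\abs[\big]{\det(S_j + i I_j)} = \prod_{m=1}^{j} \abs{\lambda_m + i} = \prod_{m=1}^{j} \sqrt{\lambda_m^2 + 1} \geq 1.
\end{equation*}
Since $\det Y_j > 0$, this immediately produces $\abs{\det Z_j} = (\det Y_j)\,\abs[\big]{\det(S_j + i I_j)} \geq \det Y_j$, which is the claim.

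I do not anticipate a genuine obstacle here: the whole proof is the standard symmetric square-root reduction, turning the comparison of $\abs{\det Z_j}$ with $\det Y_j$ into the trivial inequality $\sqrt{\lambda^2+1}\geq 1$ for real $\lambda$. The only point that warrants an explicit word is the inheritance of positive-definiteness by the principal submatrices $Y_j$, which is what simultaneously guarantees $Z_j \in \mathcal{H}_j$ and the existence of $Y_j^{1/2}$; everything else is routine linear algebra. One could instead verify the case $j=2$ by expanding $\abs{\tau_1\tau_2 - z_1^2}^2 \geq (t_1t_2 - (\Im z_1)^2)^2$ directly, but the eigenvalue argument is shorter and explains why the inequality is true for all $j$ at once.
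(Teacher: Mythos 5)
Your proof is correct, and it takes a genuinely different route from the paper's. The paper treats $j=1$ as immediate and, for $j=2$, conjugates by a real orthogonal matrix to reduce to the case where $Y_2$ is diagonal, then verifies $\abs{\det Z_2}\geq \det Y_2$ by squaring and expanding everything into the manifestly nonnegative expression $(x_1x_2-x^2)^2+x_1^2y_2^2+x_2^2y_1^2+2y_1y_2x^2\geq 0$; this is elementary but tied to the $2\times 2$ case. You instead factor $Z_j=Y_j^{1/2}(S_j+iI_j)Y_j^{1/2}$ with $S_j=Y_j^{-1/2}X_jY_j^{-1/2}$ real symmetric, so that $\abs{\det Z_j}=(\det Y_j)\prod_{m}\sqrt{\lambda_m^2+1}\geq \det Y_j$. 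All your steps check out: principal submatrices of a positive definite matrix are positive definite, so $Y_j^{1/2}$ exists and $Z_j\in\mathcal{H}_j$; the factorization is immediate; and the eigenvalue computation is valid because $S_j$ is orthogonally diagonalizable. What your approach buys is uniformity and generality --- it proves the inequality for every $j$ (indeed for Siegel space of any degree) in one stroke and explains why it is true, whereas the paper's computation covers exactly the two cases it needs by bare-hands algebra. An incidental benefit: your argument sidesteps the paper's slightly awkward invocation of an ``orthogonal matrix $U\in GL_2(\mathbb{Z})$'' (which should be read as $GL_2(\mathbb{R})$, since integral orthogonal matrices are signed permutations and cannot diagonalize a general $Y_2$).
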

\begin{proof}
    The case $j=1$ is clear. For $j=2$ we write $Z_2=X_2+iY_2.$ One knows that for any $Y_2 \in \mathcal{P}_2$ there exists an orthogonal matrix $U \in GL_2(\mathbb{Z})$ such that $Y_2[U]$ is diagonal; and since $\det (Z_2[U])=\det Z_2$ and $\det(Y_2[U])=\det(Y_2)$, we can assume without any loss of generality
\vspace{-0.1cm}
\begin{equation*}
Z_2=X_2+iY_2=\bigl(\begin{smallmatrix}
     \tau_1&x\\
     x&\tau_2\\
 \end{smallmatrix}\bigr)=\bigl(\begin{smallmatrix}
     x_1&x\\
     x&x_2\\
 \end{smallmatrix}\bigr)+i\bigl(\begin{smallmatrix}
     {y}_1&0\\
     0&y_2\\
 \end{smallmatrix}\bigr).
\vspace{-0.1cm}
\end{equation*}
A straightforward computation shows that the inequality $\abs{ \det Z_2 }=\abs{\tau_1\tau_2-x^2}\geq y_1y_2=\det Y_2$ is equivalent to $(x_1x_2-x^2)^2+x_1^2y_2^2+x_2^2y_1^2+2y_1y_2x^2\geq 0,$ which is clearly true and the claim is proven.
\end{proof}
We use $\mathcal{R}=\mathcal{R}_3$ for the standard $GL_3(\mathbb{Z})$-fundamental domain in $ \mathcal{P}_3$, called Minkowski's reduced domain, whose explicit description is found for example in \cite[p.~12]{Kli}. For $k>14$ we define
\vspace{-0.1cm}
\begin{equation*}
A=\left\{ (s,w,u) \in \mathbb{C}^3 / \Re(s)>1, \Re(w)>1, \Re(u)>4, \Re{(2s + 4w + u)} < k-4 \right\}.
\vspace{-0.1cm}
\end{equation*}
\begin{lemma}\label{convergenceonA}
Let $k>14$. The series (\ref{omega}) is absolutely uniformly convergent on $K\times V(\delta)$ for all $\delta>0$, where $K$ is a compact subset of $A$ and
\vspace{-0.06cm}
\begin{equation*}
V(\delta)=\{Z=X+iY \in \mathcal{H}_3 / \tr(X^2)\leq \delta^{-1}, \, Y \geq \delta I_3\}.
\vspace{-0.02cm}
\end{equation*}
\end{lemma}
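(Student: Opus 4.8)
The plan is to prove absolute and uniform convergence by dominating each term
$\bigl|p_{-s,-w,-u}(Z)|_k[M]\bigr|=|j(M,Z)|^{-k}\,|p_{-s,-w,-u}(MZ)|$ by an explicit positive majorant, and then summing that majorant over $G\backslash Sp_3(\mathbb{Z})$ by means of a three–level coset decomposition in which each factor is matched to a classical convergent series. First I would obtain the pointwise bound. Writing $W=MZ=X'+iY'$ and using the explicit branches in (\ref{hs}), one has $|p_{-s,-w,-u}(W)|=|\det W_1|^{-\Re s}|\det W_2|^{-\Re w}|\det W|^{-\Re u}$ times a factor of the form $\exp\bigl(\Im(s)\Im h_1+\Im(w)\Im h_2+\Im(u)\Im h_3\bigr)$, which is bounded uniformly for $(s,w,u)$ in a compact $K\subset A$ because the imaginary parts of the $h_j$ are bounded. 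For the two corner factors I apply Claim~\ref{Claim2}, $|\det W_j|\ge\det Y'_j$ for $j=1,2$; since $\Re s,\Re w>0$ this gives $|\det W_1|^{-\Re s}\le(\det Y'_1)^{-\Re s}$ and $|\det W_2|^{-\Re w}\le(\det Y'_2)^{-\Re w}$, while the full determinant $|\det W|^{-\Re u}$ must be \emph{kept untouched} (replacing it by $(\det Y')^{-\Re u}$ is fatally lossy, as the resulting majorant diverges already along the unipotent direction). Thus the summand is at most $C_K\,|j(M,Z)|^{-k}(\det Y'_1)^{-\Re s}(\det Y'_2)^{-\Re w}|\det W|^{-\Re u}$.

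Next I decompose the coset space. Since $G$ lies in the Siegel parabolic $P_S:=P_S\cap Sp_3(\mathbb{Z})$ and $P_S\cong GL_3(\mathbb{Z})\ltimes\mathcal{N}$ with $G$ the lower–unitriangular part $G_0$ of the $GL_3(\mathbb{Z})$–factor, every coset in $G\backslash Sp_3(\mathbb{Z})$ is represented uniquely by $g_A n_S\rho$ with $A\in G_0\backslash GL_3(\mathbb{Z})$, $S\in\mathscr{L}^*$, and $\rho\in P_S\backslash Sp_3(\mathbb{Z})$. Setting $W=\rho Z$ and $Y_W=\Im(\rho Z)$, the cocycle relation and $\det A=\pm1$ give $\Im(g_A n_S\rho Z)=Y_W[{}^tA]$ and $\det(g_A n_S\rho Z)=\det(W+S)$, so the corner factors depend only on $A$ and the full determinant only on $S$. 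The majorant therefore factors as
\[
C_K\!\!\sum_{\rho\in P_S\backslash Sp_3(\mathbb{Z})}\!\!|j(\rho,Z)|^{-k}\Bigl(\sum_{A}(\det(Y_W[{}^tA])_1)^{-\Re s}(\det(Y_W[{}^tA])_2)^{-\Re w}\Bigr)\Bigl(\sum_{S\in\mathscr{L}^*}|\det(W+S)|^{-\Re u}\Bigr).
\]
Because ${}^tG_0$ is exactly the upper–triangular parabolic $P$ of $GL_3(\mathbb{Z})$, the substitution $\gamma={}^tA$ identifies the inner $A$–sum with the Selberg Eisenstein series $E(Y_W\mid\Re s,\Re w,\cdot)$ of Subsection~\ref{defSelberg} in absolute value, which converges for $\Re s>1,\ \Re w>1$; the $S$–sum is the determinant series $\sum_{S\in\mathscr{L}^*}|\det(W+S)|^{-\Re u}$, i.e.\ the $F_k$–type series of \cite[p.~152]{Koh}, convergent for $\Re u>4$. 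These two facts account for three of the four inequalities defining $A$.

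It remains to sum over $\rho$, and this is the step I expect to be the main obstacle. One must show that the product of the two inner sums grows at most like a fixed power of the automorphy factor, $\Phi(\rho Z)\ll|j(\rho,Z)|^{\,2\Re s+4\Re w+\Re u}$, uniformly on $K\times V(\delta)$. Here $Y_W=\Im(\rho Z)$ degenerates as $|j(\rho,Z)|\to\infty$ (with $\det Y_W=|j(\rho,Z)|^{-2}\det Y$), so the delicate point is to control the Selberg Eisenstein series and the determinant series as $Y_W$ approaches the boundary; I would do this by reducing $Y_W$ to Minkowski's domain $\mathcal{R}$ and estimating each inner sum by powers of the diagonal of the reduced form, carefully tracking the exponent of $|j(\rho,Z)|$ that results. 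Granting this bound, the outer sum is dominated by a degree–three Siegel Eisenstein series $\sum_{\rho}|j(\rho,Z)|^{-(k-2\Re s-4\Re w-\Re u)}$, which converges precisely when $k-\Re(2s+4w+u)>4$, i.e.\ the last inequality defining $A$.

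Finally, the uniformity on $K\times V(\delta)$ follows because $K$ is compact and the constraints $Y\ge\delta I_3$ and $\tr(X^2)\le\delta^{-1}$ bound $Y$ from below and $X$ in a ball, so $\det Y$, the arguments of the $h_j$, the reduction–theoretic constants, and the Siegel–Eisenstein tail are all controlled uniformly; increasing $Y$ only improves convergence, so the supremum of the tail over $V(\delta)$ is governed by the lower boundary $Y=\delta I_3$. Combining the three convergence regions with the outer Siegel–Eisenstein estimate yields absolute uniform convergence of (\ref{omega}) on $K\times V(\delta)$, as claimed.
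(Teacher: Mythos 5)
Your strategy is, modulo a regrouping of cosets, the paper's own: the same pointwise bound (Claim~\ref{Claim2} applied to the two upper-left corners, while keeping the full determinant factor $|\det(MZ)|^{-\Re(u)}$ intact — your remark that replacing it by $(\det Y')^{-\Re(u)}$ would be fatal is correct and is exactly why the paper carries $|(\det Z_0)^{-u}|$ along), the same identification of the $GL_3(\mathbb{Z})/P$-sum with Selberg's series $E(\,\cdot\mid\Re(s),\Re(w),0)$ convergent for $\Re(s),\Re(w)>1$, and the same pair of determinant series accounting for $\Re(u)>4$ and $\Re(2s+4w+u)<k-4$. Your finer three-level decomposition (splitting off the unipotent translations from $G_{0,3}\mathcal{N}\backslash Sp_3(\mathbb{Z})$ rather than working with $G_{0,3}\backslash Sp_3(\mathbb{Z})$ directly) is harmless: factoring $|\det(W+S)|^{-\Re(u)}=|j(\rho,Z)|^{\Re(u)}\,|\det(A_SZ+B_S)|^{-\Re(u)}$ and bounding the residual $S$-sum by the full series over non-left-associated coprime symmetric pairs, uniformly convergent on $V(\delta)$ for $\Re(u)>4$ by \cite[p.~152]{Koh}, recovers exactly the paper's product bound (\ref{serie3}).

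The gap is the estimate you explicitly ``grant'': the bound of the Eisenstein factor by $C\,|j(\rho,Z)|^{2\Re(s+2w)}$, uniformly on $K\times V(\delta)$. This is the one genuinely nontrivial step of the lemma, and your sketch (``reduce $Y_W$ to Minkowski's domain and track the diagonal of the reduced form'') does not produce it. After reduction — legitimate, since the Eisenstein factor is $GL_3(\mathbb{Z})$-invariant — the paper needs three ingredients: Martin's comparison $E(Y_0\mid\Re(s),\Re(w),0)\leq C\,E(I_3\mid\Re(s),\Re(w),0)\,{Y_0}_1^{-\Re(s)}(\det{Y_0}_2)^{-\Re(w)}$, valid for $Y_0\in\mathcal{R}$ (\cite[p.~51]{Mar3}); the reduction-theoretic inequality $(\det{Y_0}_2)^{-\Re(w)}\leq c\,{Y_0}_1^{-2\Re(w)}$; and, decisively, Klingen's lower bound ${Y_0}_1\geq\alpha(\delta)\,\Im(M_0\,iI_3)_1\,|j(M_0,iI_3)|^2\,|j(M_0,Z)|^{-2}$ on $V(\delta)$, combined with the identity $\Im(M_0\,iI_3)_1\,|j(M_0,iI_3)|^2=\Adj(C\,{}^t\hspace{-0.02cm}C+D\,{}^t\hspace{-0.02cm}D)_1$, which is a positive integer and hence at least $1$. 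Together with $\Re(s)+2\Re(w)>0$ these yield ${Y_0}_1^{-\Re(s)-2\Re(w)}\leq\alpha'\,|j(M_0,Z)|^{2\Re(s+2w)}$. Note that your relation $\det Y_W=|j(\rho,Z)|^{-2}\det Y$ cannot substitute: what is required is a lower bound on the single corner entry $(Y_W)_1$ of the \emph{reduced} matrix at the precise rate $|j(\rho,Z)|^{-2}$, and the determinant identity says nothing about how that entry degenerates as $\rho$ varies; this is exactly what \cite[p.~63]{Kli} supplies, and it is also the source of the $\delta$-dependence of the constants. Until this bound (or an equivalent) is proved, the summation over $\rho$ is unjustified and the proof does not close.
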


\begin{proof} [Proof]
We define $G_{0,3} =\{\bigl(\begin{smallmatrix}
     U&0\\
     0&{}^t\hspace{-0.02cm}{U}^{-1}\\
 \end{smallmatrix}\bigr) / \ U \in GL_3(\mathbb{Z}) \}$, then $G \subset G_{0,3} \subset Sp_3(\mathbb{Z})$ and
\vspace{-0.06cm}
\begin{equation}\label{doublesum}
\begin{aligned}
\sum_{M \in G \backslash Sp_3(\mathbb{Z})}&\abs[\big]{p_{-s,-w,-u}(Z)|_k[M]}\\[-0.8em]
&\;\;\;\;=\sum_{M_0 \in G_{0,3} \backslash Sp_3(\mathbb{Z})} \abs{j(M_0,Z)}^{-k} \sum_{M_1 \in G \backslash G_{0,3}} \abs[\big]{p_{-s,-w,-u}(M_0Z)|_k[M_1]}
\end{aligned}
\vspace{-0.06cm}
\end{equation}
where $j(M_0,Z)$ is in (\ref{j}). We put $Z_0=M_0\,Z=X_0+iY_0$ throughout this proof.

Firstly, we prove
\vspace{-0.05cm}
\begin{equation}\label{aclaimed}
\sum_{M_1 \in G \backslash G_{0,3}} \abs[\big]{p_{-s,-w,-u}(Z_0)|_k[M_1]} \leq C_K \;\abs{(\det Z_0)^{-u}}\;E(Y_0 \mid \Re(s),\Re(w),0)
\vspace{-0.05cm}
\end{equation}
for some $C_K>0$ which only depends on $K$. Since $M_1$ runs through $G \backslash G_{0,3}$ if and only if $U$ runs through $GL_3(\mathbb{Z})/P$ (recall $P$ is defined in Subsection~\ref{defSelberg}), one gets
\vspace{-0.03cm}
\begin{equation}\label{tobound}
\begin{aligned}
\sum_{M_1 \in G \backslash G_{0,3}} \abs[\big]{p_{-s,-w,-u}(Z_0)|_k[M_1]}=&\sum_{U \in GL_3(\mathbb{Z})/ P} \abs[\big]{p_{-s,-w,-u}(Z_0[U])}\\
=&\abs{(\det Z_0)^{-u}} \sum_{U \in GL_3(\mathbb{Z})/ P} \abs[\big]{p_{-s,-w,0}(Z_0[U])}.
\end{aligned}
\vspace{-0.05cm}
\end{equation}
Claim~\ref{Claim2} and the representations of $h_1$ and $h_2$ from (\ref{hs}) yield
\vspace{-0.1cm}
\begin{equation}\label{bounde}
\begin{aligned}
\abs[\big]{e^{-sh_1(Z_1)} }&\leq e^{\pi \abs{\Im(s)}}Y_1^{-\Re(s)}\\
\abs[\big]{e^{-wh_2(Z_2)} }&\leq e^{\pi (\Im(w)+\abs{\Im(w)})}(\det Y_2)^{-\Re(w)}.
\end{aligned}
\vspace{-0.1cm}
\end{equation}
From (\ref{tobound}), (\ref{bounde}), the identity $p_{s,w,u}(iY)=e^{(s+2w+3u)\pi i/2}(\det Y_1)^{s}(\det Y_2)^{w}(\det Y)^{u}$
and the definition of Selberg's Eisenstein series (\ref{def eis}), we conclude (\ref{aclaimed}).

For any $Z\in \mathcal{H}_3, s\in \mathbb{C}$ we know that $\abs{(\det{Z})^s}=\abs{\det{Z}}^{\Re(s)}e^{-\Im(s)\Im(h_3(Z))}$
and $|\Im(h_3(Z))| \leq 9\pi/2$ (see \cite[p.~151,153]{Koh}); then $\abs{(\det Z_0)^{-u}} \leq \tilde{C}_K \abs{\det Z_0}^{-\Re(u)}$ for some constant $\tilde{C}_K>0$. Using (\ref{aclaimed}), the series on the left-hand side of (\ref{doublesum}) is bounded from above by a constant only depending on $K$ times the series
\vspace{-0.02cm}
\begin{equation}\label{456}
\sum_{\bigl(\begin{smallmatrix}
     A&B\\
     C&D\\
 \end{smallmatrix}\bigr)=M_0 \in G_{0,3} \backslash Sp_3(\mathbb{Z})} \hspace{-0.45cm} \abs{\det (AZ+B)}^{-\Re(u)} \abs{\det (CZ+D)}^{\Re(u)-k} E(Y_0 \mid \Re(s),\Re(w),0).
\vspace{-0.05cm}
\end{equation}
The coset in $G_{0,3} \backslash Sp_3(\mathbb{Z})$ which contains the matrix $M_0=(\begin{smallmatrix}
     A&B\\
     C&D\\
 \end{smallmatrix}) \in Sp_3(\mathbb{Z})$ is
\vspace{-0.02cm}
\begin{equation*}
G_{0,3} M_0 = \left\{ \left(\begin{smallmatrix}
     {}^t\hspace{-0.02cm}U&0\\
     0&U^{-1}\\
\end{smallmatrix}\right)M_0=\left(\begin{smallmatrix}
     {}^t\hspace{-0.02cm}UA&{}^t\hspace{-0.02cm}UB\\
     U^{-1}C&U^{-1}D\\
\end{smallmatrix}\right)/ \ U \in GL_3(\mathbb{Z})\right\}.
\vspace{-0.02cm}
\end{equation*}
Since $\abs{\det (AZ+B)}=\abs{\det ({}^t\hspace{-0.02cm}UAZ+{}^t\hspace{-0.02cm}UB)}, \abs{\det (CZ+D)}=\abs{\det (U^{-1}CZ+U^{-1}D)}$, $Y_0[U]= \Im\left(\left(\begin{smallmatrix}
     {}^t\hspace{-0.02cm}U&0\\
     0&U^{-1}\\
\end{smallmatrix}\right)M_{0} Z\right)$ and $E$ is invariant under the action of $GL_3(Z)$ on $\mathcal{P}_3$, we can assume without loss of generality that
 $Y_0 \in \mathcal{R}$ in every term of (\ref{456}).

Secondly, we prove that the series on the left-hand side of (\ref{doublesum}) is bounded by
\vspace{-0.05cm}
\begin{equation}\label{serie2}
C' \sum_{\bigl(\begin{smallmatrix}
     A&B\\
     C&D\\
 \end{smallmatrix}\bigr) \in G_{0,3} \backslash Sp_3(\mathbb{Z})} \abs{\det (AZ+B)}^{-\Re(u)} \abs{\det (CZ+D)}^{\Re(2s+4w+u)-k}
\vspace{-0.05cm}
\end{equation}
for some constant $C'=C'(\delta,K)>0$. There exists $C>0$ such that
\vspace{-0.05cm}
\begin{equation}\label{E<p}
E(Y_0 \mid \Re(s),\Re(w),0) \leq C E(I_3 \mid \Re(s),\Re(w),0) {{Y_0}_1}^{-\Re(s)}(\det {Y_0}_2)^{-\Re(w)}
\vspace{-0.05cm}
\end{equation}
for all $Y_0 \in \mathcal{R}$ and $\Re(s)>1, \Re(w)>1$ (see \cite[p.~51]{Mar3}); here note that ${Y_0}_{1} ({Y_0}_2)$ is the $1$ by $1$ ($2$ by $2$) upper left-hand corner in $Y_0$. We know $E(I_3 \mid \Re(s),\Re(w),0)$ is bounded by a constant which only depends on $K$ for all $(s,w,u) \in K$ (see Subsection \ref{defSelberg}).

Now we get an upper bound for ${{Y_0}_1}^{-\Re(s)}(\det {Y_0}_2)^{-\Re(w)}$.  If we denote by $y_j$ the
$(j,j)$-th entry of $Y_0$, and use that $Y_0 \in \mathcal{R}$ (and therefore ${Y_0}_2
\in  \mathcal{R}_2$) we have $y_1^2 \leq y_1y_2 \leq
c (\det{{Y_0}_2})$ for some constant $c > 0$ (see for instance \cite[p.~12,13]{Kli}). Thus
\vspace{-0.1cm}
\begin{equation*}
 (\det{{Y_0}_2})^{-\Re{(w)}} \leq c^{\Re{(w)}} y_1^{-2\Re{(w)}} \leq c_K  {{Y_0}_1}^{-2\Re{(w)}},
\vspace{-0.1cm}
\end{equation*}
using that $w$ runs on a compact subset of $\mathbb{C}$. Now, we recall the following statement proved in \cite[p.~63]{Kli}: There exists a constant $\alpha = \alpha(\delta) > 0$ such that
\vspace{-0.05cm}
\begin{equation}\label{f eq 101}
{Y_0}_1 \geq \alpha \; \Im{(M_0 iI_3)_1} \left| j( M_0, iI_3)\right|^2 \left| j(M_0, Z)\right|^{-2}
\vspace{-0.05cm}
\end{equation}
for all $Z \in V(\delta), M_0 \in Sp_3(\mathbb{Z})$. It is well-known
that $M_0 = \left(\begin{smallmatrix} * & * \\
                  C & D\end{smallmatrix}\right)$ implies
\vspace{-0.1cm}
\begin{equation*}
\begin{aligned}
\hfilneg \Im{(M_0 iI_3)} = {}^t\hspace{-0.1cm}\left(D+Ci\right)^{-1} \Im{(iI_3)} \left(D-Ci\right)^{-1}&=\left( \left(D - iC\right) \; {}^t\hspace{-0.1cm} \left(D + iC\right)\right)^{-1} \hspace{10000pt minus 1fil} \\
& \;\;\;\;= \left|\det{(D + iC)}\right|^{-2} \Adj{(C\,{}^t\hspace{-0.02cm}C+D\,{}^t\hspace{-0.02cm}D}),
\end{aligned}
\vspace{-0.1cm}
\end{equation*}
where $\Adj$ denotes the adjoint matrix (see \cite[p.~3]{Kli}). In particular
\vspace{-0.1cm}
\begin{equation*}
\Im{(M_0 iI_3)}_1 = \left|j(M_0, iI_3)\right|^{-2} \Adj{(C\,{}^t\hspace{-0.02cm}C+D\,{}^t\hspace{-0.02cm}D})_1,
\vspace{-0.12cm}
\end{equation*}
and using it in (\ref{f eq 101}) one obtains ${{Y_0}_1} \geq \alpha \Adj{(C\,{}^t\hspace{-0.02cm}C+D\,{}^t\hspace{-0.02cm}D})_1  \left| j( M_0, Z)\right|^{-2}$. Next, since $\Adj{(C\,{}^t\hspace{-0.02cm}C+D\,{}^t\hspace{-0.02cm}D})_1=\det (((C\,{}^t\hspace{-0.02cm}C+D\,{}^t\hspace{-0.02cm}D)[W])_2)$
and $C\,{}^t\hspace{-0.02cm}C+D\,{}^t\hspace{-0.02cm}D \in \mathcal{P}_3$, we deduce that $\Adj{(C\,{}^t\hspace{-0.02cm}C + D\,{}^t\hspace{-0.02cm}D)}_1$ is a positive integer ($W$ is the matrix in (\ref{W})). Furthermore, since $\Re{(s)} + 2\Re{(w)} > 0$ we can conclude
\vspace{-0.1cm}
\begin{equation*}
{{Y_0}_1}^{-\Re(s)}(\det {Y_0}_2)^{-\Re(w)} \leq c_K {{Y_0}_1}^{-\Re(s)-2\Re(w)} \leq \alpha' 
\left| j( M_0, Z)\right|^{2\Re{(s+2w)}}
\vspace{-0.05cm}
\end{equation*}
for some constant $\alpha'=\alpha'(\delta,K) > 0$, obtaining the bound.

From (\ref{456}), using the previous remarks in (\ref{E<p}), one obtains that the left-hand side of (\ref{doublesum}) is bounded by (\ref{serie2}).

If $\bigl(\begin{smallmatrix}
     A&B\\
     C&D\\
 \end{smallmatrix}\bigr)$ runs through a complete set of representatives for $G_{0,3} \backslash Sp_3(\mathbb{Z})$,
both $(C,D)$ and $(A,B)$ run over the collection of non-left-associated coprime symmetric matrix pairs. Consequently, the series in (\ref{serie2}) is bounded by
\vspace{-0.05cm}
\begin{equation}\label{serie3}
 \sum_{\{C,D\}} \abs{\det(CZ+D)}^{-\Re(u)} \sum_{\{C,D\}} \abs{\det(CZ+D)}^{\Re{(2s+4w+u)}-k}
\vspace{-0.13cm}
\end{equation}
where each sum in (\ref{serie3}) is over all non-left-associated coprime symmetric pairs. It is well-known that
\vspace{0.1cm}
\begin{equation*}
 \sum_{\{C,D\}} \abs{\det(CZ+D)}^{-r}
 \vspace{0.2cm}
\end{equation*}
is uniformly convergent on $K_2 \times V(\delta)$ where $K_2$ is a compact subset of $\{r\in \mathbb{Z} / r>4\}$ (for the final facts, see \cite[p.~153]{Koh}). Applying this to (\ref{serie3}), we obtain Lemma~\ref{convergenceonA}.
\end{proof}

We remark that $\mathcal{F}$, the standard Siegel $Sp_3(\mathbb{Z})$-fundamental domain in $\mathcal{H}_3$, is contained in $V(\delta)$ for some positive number $\delta$ (see for instance \cite[p.~29,30]{Kli}).

\vspace{0.2cm}
\begin{lemma}\label{AH3}
The series (\ref{omega}) is absolutely convergent and defines a holomorphic function on $A \times \mathcal{H}_3$.
\end{lemma}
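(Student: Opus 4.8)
The plan is to bootstrap from the uniform convergence statement of Lemma~\ref{convergenceonA} to pointwise absolute convergence and global holomorphy by means of the standard Weierstrass principle for holomorphic functions of several complex variables. The three ingredients I would assemble are: each summand is jointly holomorphic in all of its variables; the family $\{V(\delta)\}_{\delta>0}$ exhausts $\mathcal{H}_3$ with every point lying in the interior of some $V(\delta)$; and Lemma~\ref{convergenceonA} already furnishes uniform convergence on the product of a compact subset of $A$ with each $V(\delta)$.

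First I would record that every term $p_{-s,-w,-u}(Z)|_k[M]$ is a holomorphic function of $(s,w,u,Z)$ on $\mathbb{C}^3\times\mathcal{H}_3$. Indeed, by (\ref{potencia}) and the holomorphy of $h_1,h_2,h_3$ on $\mathcal{H}_1,\mathcal{H}_2,\mathcal{H}_3$, the power function $p_{-s,-w,-u}(Z)$ is jointly holomorphic in its four arguments; and since $j(M,Z)=\det(CZ+D)$ is holomorphic and nowhere vanishing on $\mathcal{H}_3$ while $MZ\in\mathcal{H}_3$ depends holomorphically on $Z$, the slash action (\ref{j}) preserves this joint holomorphy.

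Next I would verify that $\mathcal{H}_3=\bigcup_{\delta>0}V(\delta)$ and, more precisely, that each $Z_0\in\mathcal{H}_3$ is an interior point of some $V(\delta)$. Given $Z_0=X_0+iY_0\in\mathcal{H}_3$, the least eigenvalue $\lambda$ of $Y_0\in\mathcal{P}_3$ is positive; since $\delta^{-1}\to\infty$ as $\delta\to0^+$, one may choose $\delta\in(0,\lambda)$ with $\tr(X_0^2)<\delta^{-1}$. Then $Y_0-\delta I_3\in\mathcal{P}_3$ and $\tr(X_0^2)<\delta^{-1}$; as both are strict versions of open conditions defining $V(\delta)$, the point $Z_0$ lies in the interior of $V(\delta)$.

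Finally I would assemble the conclusion. For absolute convergence at an arbitrary $(s,w,u,Z)\in A\times\mathcal{H}_3$, apply Lemma~\ref{convergenceonA} with the (compact) singleton $K=\{(s,w,u)\}$ and a $\delta$ with $Z\in V(\delta)$. For holomorphy, fix a base point $(s_0,w_0,u_0,Z_0)\in A\times\mathcal{H}_3$; since $A$ is open, choose a compact polydisc $K\subset A$ containing $(s_0,w_0,u_0)$ in its interior, and choose $\delta$ with $Z_0$ interior to $V(\delta)$. On the open neighborhood formed by the interior of $K$ times the interior of $V(\delta)$, the series converges uniformly by Lemma~\ref{convergenceonA}, so the Weierstrass theorem (a locally uniform limit of holomorphic functions of several complex variables is holomorphic) shows that ${\Omega}_{k,s,w,u}(Z)$ is holomorphic there. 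Since the base point was arbitrary, holomorphy holds on all of $A\times\mathcal{H}_3$. The only genuine analytic work sits in Lemma~\ref{convergenceonA}, already in hand; the present statement is a routine packaging argument, and the one step I would treat with care is the verification that the $V(\delta)$ exhaust $\mathcal{H}_3$ through interior points, so that the local uniform convergence can be propagated across the whole domain.
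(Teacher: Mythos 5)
Your proof is correct, but it takes a genuinely different route from the paper's. You exploit the fact that the sets $V(\delta)$ exhaust $\mathcal{H}_3$ through interior points (your eigenvalue argument for this is sound: choosing $\delta$ below the least eigenvalue of $Y_0$ and below $\tr(X_0^2)^{-1}$ makes both defining conditions strict, hence open), so Lemma~\ref{convergenceonA} applied with a small $\delta$ and a compact neighborhood in $A$ already gives locally uniform convergence everywhere, and Weierstrass finishes the job. The paper instead keeps $\delta$ fixed with $\mathcal{F}\subseteq V(\delta)$ and moves the point $Z$ into the fundamental domain: it picks $M_Z\in Sp_3(\mathbb{Z})$ with $M_ZZ\in\mathcal{F}$, uses the cocycle relation $j(MN,Z)=j(M,NZ)j(N,Z)$ and the bijection $GM\mapsto GMM_Z$ to transfer absolute convergence from $M_ZZ$ back to $Z$, and then handles compact subsets of $\mathcal{H}_3$ via the fact that they are covered by finitely many $Sp_3(\mathbb{Z})$-translates of $\mathcal{F}$. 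Your argument is more elementary, needing only point-set topology rather than reduction theory. What the paper's detour buys is the identity (\ref{abcde}), the $Sp_3(\mathbb{Z})$-invariance ${\Omega}_{k,s,w,u}(Z)|_k[N]={\Omega}_{k,s,w,u}(Z)$ on $A\times\mathcal{H}_3$, which is extracted as a byproduct of the same cocycle manipulation and is cited later (in Lemma~\ref{wequation} and Proposition~\ref{cuspACWC}); if your proof replaced the paper's, that invariance would still need to be established separately, though it is not part of the statement of the present lemma.
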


\begin{proof} [Proof]
Firstly, we show that (\ref{omega}) is absolutely convergent on $A \times \mathcal{H}_3$. Let $(s,w,u,Z)$ be an arbitrary element in $A\times\mathcal{H}_3$ and pick $M_Z\in Sp_3(\mathbb{Z})$ such that $M_ZZ\in \mathcal{F}\subseteq V(\delta).$ Since $j(MN,Z)=j(M,NZ)j(N,Z)$ for all $M,N \in Sp_3(\mathbb{Z})$, and the map $GM\mapsto GMM_Z$ on $G \backslash Sp_3(\mathbb{Z})$ is bijective, we have
\vspace{-0.05cm}
\begin{equation*}
\begin{aligned}
&\sum_{M \in G \backslash Sp_3(\mathbb{Z})}\abs{j(M,Z)^{-k}p_{-s,-w,-u}(MZ)} \\[-0.1em]
=&\,\abs{j({M_Z}^{-1},M_ZZ)}^k\sum_{M \in G \backslash Sp_3(\mathbb{Z})}\abs{j(M{M_Z}^{-1},M_ZZ)^{-k}p_{-s,-w,-u}(M{M_Z}^{-1}M_ZZ)}\\[-0.1em]
=&\,\abs{j({M_Z}^{-1},M_ZZ)}^k\sum_{M \in G \backslash Sp_3(\mathbb{Z})} \abs[\big]{j(M,M_ZZ)^{-k}p_{-s,-w,-u}(MM_ZZ)}\\[-0.1em]
=&\,\abs{j({M_Z}^{-1},M_ZZ)}^k\sum_{M \in G \backslash Sp_3(\mathbb{Z})} \abs[\big]{p_{-s,-w,-u}(M_ZZ)|_k[M]}.\\[-0.1em]
\end{aligned}
\vspace{-0.06cm}
\end{equation*}
As $(s,w,u,M_ZZ) \in A\times V(\delta)$, the last series converges by Lemma~\ref{convergenceonA} and the absolute convergence of (\ref{omega}) on $A \times \mathcal{H}_3$ is proved. In particular, the series (\ref{omega}) is a well-defined function on $A \times \mathcal{H}_3$.

We note that for any $(s,w,u) \in A$, the series (\ref{omega}) is invariant under the action of $Sp_3(\mathbb{Z})$: by the absolute convergence just proved we see that the previous argument works without the absolute values and for any matrix $N$ instead of $M_Z$. Then
\vspace{-0.05cm}
\begin{equation}\label{abcde}
{\Omega}_{k,s,w,u}(Z)=j(N^{-1},NZ)^k {\Omega}_{k,s,w,u}(NZ)={\Omega}_{k,s,w,u}(Z)|_k[N]
\vspace{-0.02cm}
\end{equation}
for all $(s,w,u,Z)\in A\times \mathcal{H}_3, N \in Sp_3(\mathbb{Z}).$

Secondly, we prove that the series (\ref{omega}) is uniformly convergent on subsets $K \times \widetilde{K}$ for $K, \widetilde{K}$ arbitrary compact subsets of $A$ and $\mathcal{H}_3$ respectively. Let us suppose the existence of a matrix $\widetilde{M} \in Sp_3(\mathbb{Z})$ such that $\widetilde{M}\widetilde{K} \subseteq \mathcal{F}\subseteq V(\delta)$; from the identity ${\Omega}_{k,s,w,u}(Z)=j(\widetilde{M},Z)^{-k}{\Omega}_{k,s,w,u}(\widetilde{M}Z)$ in (\ref{abcde}), and considering Lemma~\ref{convergenceonA}, we conclude ${\Omega}_{k,s,w,u}(Z)$ is uniformly convergent on $K\times \widetilde{K}$. Since each compact subset of $\mathcal{H}_3$ is covered by at most finitely many images of $\mathcal{F}$ under $Sp_3(\mathbb{Z})$ (see \cite[p.~31]{Kli}), the general case follows from the previous assumption.

For any $M \in Sp_3(\mathbb{Z})$, the map
\vspace{-0.1cm}
\begin{equation*}
    (s,w,u,Z) \mapsto p_{-s,-w,-u}(Z)|_k[M]=j(M,Z)^{-k}e^{-sh_1((MZ)_1)}e^{-wh_2((MZ)_2)}e^{-uh_3(MZ)}
\vspace{-0.02cm}
\end{equation*}
is holomorphic on $A\times \mathcal{H}_3$, and using the uniform convergence just proved, we obtain that the series (\ref{omega}) defines a holomorphic function on $A \times \mathcal{H}_3$.
\end{proof}

The region obtained in Lemma~\ref{AH3} admits an improvement. For $k>14$ we define
\vspace{-0.1cm}
\begin{equation*}
WA=\{ (s,w,u)\in \mathbb{C}^3 / \Re(s)>1, \Re(w)>1, \Re(s+w+u)<k-4, \Re(3s+w-u)<-4\}.
\vspace{-0.1cm}
\end{equation*}

\begin{lemma}\label{wequation}
Let $k>18$. The series (\ref{omega}) satisfies
\vspace{-0.02cm}
\begin{equation}\label{Wequation}
e^{(s+2w+3u)\pi i}{\Omega}_{k,s,w,u}(Z)={\Omega}_{k,w,s,-s-w-u+k}(Z)
\vspace{-0.02cm}
\end{equation}
for all $(s,w,u,Z) \in (A \cap WA)\times \mathcal{H}_3.$ Consequently, ${\Omega}_{k,s,w,u}(Z)$ admits a holomorphic continuation from $A \times \mathcal{H}_3$ to $(A\cup WA) \times \mathcal{H}_3$ via (\ref{Wequation}).
\end{lemma}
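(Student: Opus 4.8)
The plan is to exhibit the substitution $(s,w,u)\mapsto(w,s,-s-w-u+k)$ as left multiplication of the summation index by a single fixed element $\eta\in Sp_3(\mathbb{Z})$, and then to reindex the coset sum in (\ref{omega}). I take $\eta=\left(\begin{smallmatrix}0&-W\\ W&0\end{smallmatrix}\right)$, which is symplectic (a product of $\mathrm{diag}(W,{}^tW^{-1})=\mathrm{diag}(W,W)$ and the inversion $\left(\begin{smallmatrix}0&-I_3\\ I_3&0\end{smallmatrix}\right)$) and acts on $\mathcal{H}_3$ by $\eta Z=(-Z^{-1})[W]$, using $W={}^tW$, $W^2=I_3$, $\det W=-1$. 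The whole argument rests on Claim~\ref{claim1} at the level of the power function, the cocycle relation $j(MN,Z)=j(M,NZ)j(N,Z)$, and the $|_k$-invariance $p_{-s,-w,-u}(Z)|_k[g]=p_{-s,-w,-u}(Z)$ for $g\in G$ (already needed for the sum (\ref{omega}) to be well defined).

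First I would record the pointwise identity coming from Claim~\ref{claim1} with $(s_1,s_2,s_3)=(-w,-s,s+w+u)$, namely $p_{-w,-s,s+w+u}(-Z^{-1})=e^{(s+2w+3u)\pi i}p_{-s,-w,-u}(Z[W])$; substituting $Z\mapsto -Z'^{-1}$ and simplifying $[W]$ gives, for all $Z'\in\mathcal{H}_3$,
\begin{equation*}
p_{-w,-s,s+w+u}(Z')=e^{(s+2w+3u)\pi i}\,p_{-s,-w,-u}(\eta Z').
\end{equation*}
Applying this with $Z'=MZ$ inside each summand of $\Omega_{k,w,s,-s-w-u+k}(Z)$, and writing $p_{-w,-s,s+w+u-k}(MZ)=p_{-w,-s,s+w+u}(MZ)\,(\det MZ)^{-k}$ (unambiguous since $k\in\mathbb{Z}$), the summand becomes $e^{(s+2w+3u)\pi i}\,j(M,Z)^{-k}(\det MZ)^{-k}\,p_{-s,-w,-u}(\eta MZ)$.

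The automorphy factor then collapses: since $\det(MZ)=\det(AZ+B)\,j(M,Z)^{-1}$ and the lower blocks of $\eta M=\left(\begin{smallmatrix}-WC&-WD\\ WA&WB\end{smallmatrix}\right)$ give $j(\eta M,Z)=\det\bigl(W(AZ+B)\bigr)=-\det(AZ+B)$, one gets $j(M,Z)^{-k}(\det MZ)^{-k}=\det(AZ+B)^{-k}=j(\eta M,Z)^{-k}$ using $\det W=-1$ and $k$ even. Hence each summand equals $e^{(s+2w+3u)\pi i}p_{-s,-w,-u}(Z)|_k[\eta M]$, so that
\begin{equation*}
\Omega_{k,w,s,-s-w-u+k}(Z)=e^{(s+2w+3u)\pi i}\sum_{M\in G\backslash Sp_3(\mathbb{Z})}p_{-s,-w,-u}(Z)|_k[\eta M].
\end{equation*}
To reindex, I would check that $\eta$ normalizes $G$: conjugation sends $\left(\begin{smallmatrix}S&0\\0&{}^tS^{-1}\end{smallmatrix}\right)$ to $\left(\begin{smallmatrix}W{}^tS^{-1}W&0\\0&WSW\end{smallmatrix}\right)$, and $S\mapsto W{}^tS^{-1}W$ maps the group of lower-triangular $\pm1$-matrices onto itself, because transpose-inverse together with the order-reversing conjugation by $W$ preserves both lower-triangularity and the diagonal signs. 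Thus left multiplication by $\eta$ permutes the cosets $G\backslash Sp_3(\mathbb{Z})$; combined with the $G$-invariance of $p_{-s,-w,-u}$ under $|_k$, the sum reindexes to $\Omega_{k,s,w,u}(Z)$, yielding (\ref{Wequation}).

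This reindexing of an infinite sum is legitimate on $A\cap WA$, where both triples $(s,w,u)$ and $(w,s,-s-w-u+k)$ lie in $A$—indeed the four inequalities defining $WA$ are exactly the conditions $(w,s,-s-w-u+k)\in A$—so both series converge absolutely by Lemma~\ref{AH3}, and one checks $A\cap WA\neq\emptyset$ precisely for $k>18$. For the continuation, if $(s,w,u)\in WA$ then $(w,s,-s-w-u+k)\in A$, so the right-hand side of (\ref{Wequation}) is holomorphic there by Lemma~\ref{AH3}; defining $\Omega_{k,s,w,u}(Z):=e^{-(s+2w+3u)\pi i}\Omega_{k,w,s,-s-w-u+k}(Z)$ on $WA\times\mathcal{H}_3$ agrees with the original on $(A\cap WA)\times\mathcal{H}_3$ and hence gives the holomorphic continuation to $(A\cup WA)\times\mathcal{H}_3$. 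I expect the main obstacle to be the bookkeeping in the pointwise identity—pinning down the constant $e^{(s+2w+3u)\pi i}$ and reassembling the automorphy factor as $j(\eta M,Z)^{-k}$ under the principal-branch conventions—together with the verification that $\eta$ normalizes $G$.
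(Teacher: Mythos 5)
Your proof is correct and takes essentially the same route as the paper: the paper also works with $\widetilde{W}=\bigl(\begin{smallmatrix}0&-W\\ W&0\end{smallmatrix}\bigr)$ (your $\eta$), derives the same pointwise identity from Claim~\ref{claim1}, and reindexes the coset sum on $A\cap WA$ using absolute convergence, the only organizational difference being that the paper goes through the invariance $\Omega_{k,s,w,u}=\Omega_{k,s,w,u}|_k[\widetilde W]$ and the conjugation bijection $GM\mapsto G\widetilde W^{-1}M\widetilde W$, whereas you transform the summands directly and reindex by left multiplication. Your explicit verification that $\eta$ normalizes $G$ (needed for either reindexing) is a point the paper leaves implicit.
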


\begin{proof} [Proof]
Defining $\widetilde{W}=\bigl(\begin{smallmatrix}
    0&-W\\
    W&0\
 \end{smallmatrix}\bigr)$ with $W$ in (\ref{W}), we use $Z[W]$ in Claim~\ref{claim1} obtaining
\vspace{-0.05cm}
\begin{equation*}\label{104b}
p_{s,w,u}(Z)|_k[\widetilde{W}]=(\det Z)^{-k}p_{s,w,u}(-Z^{-1}[W])=e^{(s+2w+3u) \pi i}p_{w,s,-s-w-u-k}(Z).
\vspace{-0.05cm}
\end{equation*}
The invariance in (\ref{abcde}) with $\widetilde{W}$, the absolute convergence in Lemma~\ref{AH3}, the previous identity and the bijectivity of the map $GM \mapsto G\widetilde{W}^{-1}M\widetilde{W}$ on $G \backslash Sp_3(\mathbb{Z})$ give us
\vspace{-0.1cm}
\begin{equation*}
\begin{aligned}
\hfilneg {\Omega}_{k,s,w,u}(Z)&={\Omega}_{k,s,w,u}(Z)|_k[\widetilde{W}] \hspace{10000pt minus 1fil}\\[-0.1em]
&=\sum_{M \in G \backslash Sp_3(\mathbb{Z})}p_{-s,-w,-u}(Z)|_k[M\widetilde{W}]\\[-0.1em]
&=\sum_{M \in G \backslash Sp_3(\mathbb{Z})}\left(p_{-s,-w,-u}(Z)|_k[\widetilde{W}]\right) |_k[\widetilde{W}^{-1}M\widetilde{W}]\\[-0.1em]
\end{aligned}
\vspace{-0.1cm}
\end{equation*}
\begin{equation*}
\begin{aligned}
&=e^{-(s+2w+3u)\pi i} \sum_{M \in G \backslash Sp_3(\mathbb{Z})}p_{-w,-s,s+w+u-k}(Z)|_k[\widetilde{W}^{-1}M\widetilde{W}]\\[-0.1em]
&=e^{-(s+2w+3u)\pi i}{\Omega}_{k,w,s,-s-w-u+k}(Z).\\[-0.1em]
\end{aligned}
\vspace{0.3cm}
\end{equation*}
The argument above proves the equality in (\ref{Wequation}) for $(s,w,u) \in A$ and $(w,s,-s-w-u+k) \in A$. As the last condition is equivalent to $(s,w,u) \in WA$, the functional equation (\ref{Wequation}) holds on $A\cap WA$ (which is not empty, as $k >18$). Defining ${\Omega}_{k,s,w,u}(Z)$ on $WA$ via (\ref{Wequation}) and using Lemma~\ref{AH3}, we conclude that $(s,w,u,Z)\mapsto {\Omega}_{k,s,w,u}(Z)$ is a well-defined, holomorphic function on $(A \cup WA) \times \mathcal{H}_3$.
\end{proof} 

In order to enlarge the domain of the kernel, which is $(A \cup WA) \times \mathcal{H}_3$ so far, we look for another representation of the series (\ref{omega}).
For $k>22$, we define the set $B$ as the intersection between the set of Proposition~\ref{lipschitz} and $A$:
\vspace{-0.1cm}
\begin{equation*}
 B=\{ (s,w,u) \in \mathbb{C}^3 / \Re(s)>1, \Re(w)>3, \Re(u)>4, \Re(2s+4w+u)<k-4\}.
\vspace{-0.1cm}
\end{equation*}

\begin{lemma}\label{first half}
Let $k>22$ and $(s,w,u,Z) \in B \times \mathcal{H}_3$. Then ${\Omega}_{k,s,w,u}(Z)$ is equal to
\vspace{-0.1cm}
\begin{equation*}
\frac{(2\pi)^{s+2w+3u}}{\Gamma_3(-w,-s,s+w+u)}\sum_{\{C,D\}}\Bigl(\sum_{T \in \mathscr{L}\cap \mathcal{P}_3}\hspace{-0.3cm}E(T| w,s,-s-w-u+2)e(TZ)\Bigr)|_k [\left(\begin{smallmatrix}
     A_0&B_0\\
     C&D\\
   \end{smallmatrix}\right)]
\vspace{-0.1cm}
\end{equation*}
where $\{C,D\}$ is as in (\ref{serie3}) and each $A_0, B_0$ is chosen so that $\bigl(\begin{smallmatrix}
     A_0&B_0\\
     C&D\\
   \end{smallmatrix}\bigr) \in Sp_3(\mathbb{Z})$.
 \end{lemma}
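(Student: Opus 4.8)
The plan is to regroup the cosets in (\ref{omega}) so that a complete $\mathscr{L}^*$-translation sum is exposed, apply Proposition~\ref{lipschitz} to it, and then reassemble the remaining cosets into Selberg's Eisenstein series. First I would bring in the translation group $\mathcal{N}$ of Subsection~\ref{cusp} and set $G^{+}=G\mathcal{N}$. A direct computation, $\bigl(\begin{smallmatrix}S&0\\0&{}^tS^{-1}\end{smallmatrix}\bigr)\bigl(\begin{smallmatrix}I_3&B\\0&I_3\end{smallmatrix}\bigr)\bigl(\begin{smallmatrix}S&0\\0&{}^tS^{-1}\end{smallmatrix}\bigr)^{-1}=\bigl(\begin{smallmatrix}I_3&SB\,{}^tS\\0&I_3\end{smallmatrix}\bigr)$, shows that $G$ normalizes $\mathcal{N}$; since also $G\cap\mathcal{N}=\{I_6\}$, the assignment $B\mapsto G\bigl(\begin{smallmatrix}I_3&B\\0&I_3\end{smallmatrix}\bigr)$ identifies $\mathscr{L}^*$ with $G\backslash G^{+}$ and $G^{+}$ is a subgroup. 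Choosing coset representatives of $G\backslash Sp_3(\mathbb{Z})$ in the form $\bigl(\begin{smallmatrix}I_3&B\\0&I_3\end{smallmatrix}\bigr)M'$ with $B\in\mathscr{L}^*$ and $M'\in G^{+}\backslash Sp_3(\mathbb{Z})$, using the cocycle relation for $j$ together with $p_{-s,-w,-u}(Z)|_k[\bigl(\begin{smallmatrix}I_3&B\\0&I_3\end{smallmatrix}\bigr)]=p_{-s,-w,-u}(Z+B)$, the absolute convergence of Lemma~\ref{AH3} (valid since $B\subseteq A$) lets me rearrange (\ref{omega}) into
\begin{equation*}
{\Omega}_{k,s,w,u}(Z)=\sum_{M'\in G^{+}\backslash Sp_3(\mathbb{Z})}\Bigl(\sum_{B\in\mathscr{L}^*}p_{-s,-w,-u}(Z+B)\Bigr)\Big|_k[M'].
\end{equation*}
The inner sum is precisely the left-hand side of Proposition~\ref{lipschitz}---this is where the hypothesis $\Re(w)>3$ enters---so I may replace it by $\kappa\sum_{T\in\mathscr{L}\cap\mathcal{P}_3}p_{-w,-s,s+w+u-2}(iWTW)\,e(TZ)$, where, after absorbing the gamma and exponential factors via (\ref{Gamma3property}), the constant is $\kappa=-(-2\pi i)^{s+2w+3u}/\Gamma_3(-w,-s,s+w+u)$.

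Next I would split $G^{+}\backslash Sp_3(\mathbb{Z})$ through the Siegel parabolic $\mathcal{Q}=G_{0,3}\mathcal{N}$ of block upper-triangular matrices ($G_{0,3}$ as in the proof of Lemma~\ref{convergenceonA}). Since $\mathcal{N}\subseteq G^{+}\subseteq\mathcal{Q}$ with $\mathcal{N}$ normal in $\mathcal{Q}$, one has $G^{+}\backslash\mathcal{Q}\cong G\backslash G_{0,3}$, a single copy of $GL_3(\mathbb{Z})/P$ with representatives $\bigl(\begin{smallmatrix}\gamma&0\\0&{}^t\gamma^{-1}\end{smallmatrix}\bigr)$, whereas $\mathcal{Q}\backslash Sp_3(\mathbb{Z})$ is parametrized through its bottom block-row by the non-left-associated coprime symmetric pairs $\{C,D\}$, exactly the index set of the claim. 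Writing $M'=\bigl(\begin{smallmatrix}\gamma&0\\0&{}^t\gamma^{-1}\end{smallmatrix}\bigr)\bigl(\begin{smallmatrix}A_0&B_0\\C&D\end{smallmatrix}\bigr)$ and fixing $\{C,D\}$, I am reduced to the inner $\gamma$-sum. Acting by $|_k[\bigl(\begin{smallmatrix}\gamma&0\\0&{}^t\gamma^{-1}\end{smallmatrix}\bigr)]$ fixes the coefficient $p_{-w,-s,s+w+u-2}(iWTW)$ and sends $e(TZ)$ to $e(T[\gamma]Z)$; after the substitution $T\mapsto T[\gamma^{-1}]$ on $\mathscr{L}\cap\mathcal{P}_3$ and the identity $W\,T[\gamma^{-1}]\,W=T[\gamma^{-1}W]$, the coefficient of $e(TZ)$ becomes $\sum_{\gamma}p_{-w,-s,s+w+u-2}(iT[\gamma^{-1}W])$. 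As conjugation by $W$ carries the lower-triangular subgroup defining $G$ onto the upper-triangular $P$, the substitution $\delta=\gamma^{-1}W$ matches a transversal of $G\backslash G_{0,3}$ with one of $GL_3(\mathbb{Z})/P$, so this coefficient equals $\sum_{\delta\in GL_3(\mathbb{Z})/P}p_{-w,-s,s+w+u-2}(iT[\delta])$, which by (\ref{def eis}) is $e^{(s+2w+3u-6)\pi i/2}E(T\mid w,s,-s-w-u+2)$.

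It then remains to collect constants: combining this exponential with $\kappa$ and using (\ref{Gamma3property}) in the form $\Gamma_3(-w,-s,s+w+u)=\pi^{3/2}e^{(s+2w+3u)\pi i/2}\Gamma(s+w+u-1)\Gamma(w+u-1/2)\Gamma(u)$, one checks $\kappa\,e^{(s+2w+3u-6)\pi i/2}=(2\pi)^{s+2w+3u}/\Gamma_3(-w,-s,s+w+u)$, which is exactly the prefactor in the statement; summing $(\;\cdot\;)|_k[\bigl(\begin{smallmatrix}A_0&B_0\\C&D\end{smallmatrix}\bigr)]$ over $\{C,D\}$ then yields the asserted identity. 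I expect the main obstacle to lie in the two coset factorizations $G\backslash G^{+}\cong\mathscr{L}^*$ and $G^{+}\backslash Sp_3(\mathbb{Z})\cong(GL_3(\mathbb{Z})/P)\times(\mathcal{Q}\backslash Sp_3(\mathbb{Z}))$ and in the interchanges of summation they force: each regrouping is legitimate only thanks to the absolute convergence supplied by Lemma~\ref{AH3} and Proposition~\ref{lipschitz} throughout $B$, and one must verify that the reindexing $\gamma\mapsto\gamma^{-1}W$ respects the left/right coset conventions so that precisely one copy of $GL_3(\mathbb{Z})/P$---hence the full Eisenstein series---is recovered.
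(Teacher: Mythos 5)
Your proposal is correct and takes essentially the same route as the paper's proof: the identical triple coset decomposition $\bigl(\begin{smallmatrix}I_3&B\\0&I_3\end{smallmatrix}\bigr)\bigl(\begin{smallmatrix}\gamma&0\\0&{}^t\gamma^{-1}\end{smallmatrix}\bigr)\bigl(\begin{smallmatrix}A_0&B_0\\C&D\end{smallmatrix}\bigr)$ (the paper factors through the chain $G\le G_{0,3}\le G_{0,3}\mathcal{N}$ rather than your $G\le G\mathcal{N}\le G_{0,3}\mathcal{N}$, which yields the same representatives), then Proposition~\ref{lipschitz} applied to the translation sum, the reindexing $T\mapsto T[\gamma^{-1}]$, and the recognition of Selberg's Eisenstein series with the same constant bookkeeping via (\ref{Gamma3property}). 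The only cosmetic deviations are that the paper writes the transversal $\{U^{-1}W\}$ as $W\,(GL_3(\mathbb{Z})/P)$ and invokes the $W$-invariance of $E$ where you absorb $W$ into the transversal directly, and that your $\gamma$ must be read as running over ${}^tP\backslash GL_3(\mathbb{Z})$ (not $GL_3(\mathbb{Z})/P$, as you state at one point) for the substitution $\delta=\gamma^{-1}W$ to produce exactly one copy of $GL_3(\mathbb{Z})/P$, a point you yourself flag for verification.
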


 \begin{proof} [Proof]
 The group $G_{0,3}$, defined prior to (\ref{doublesum}), is contained in the normalizer of $\mathcal{N}$ introduced in Subsection \ref{cusp}, thus $G \leq G_{0,3} \leq G_{0,3}\mathcal{N} \leq Sp_3(\mathbb{Z})$ is a chain of groups. It is well-known that the set of matrices in $Sp_3(\mathbb{Z})$ whose lower $3$ by $3$ blocks $(C, D)$ run over all non left-associated coprime symmetric matrix pairs is a complete set of representatives for $G_{0,3}\mathcal{N} \backslash Sp_3(\mathbb{Z})$ (see for instance \cite[p.~157]{Maa}). Since $G_{0,3} \cap \mathcal{N}=\{I_6\}$, a complete set of representatives for $G_{0,3} \backslash G_{0,3}\mathcal{N}$ is $\mathcal{N}$. Furthermore, the set of matrices $\bigl(\begin{smallmatrix}
     U&0\\
     0&^t{U}^{-1}\\
   \end{smallmatrix}\bigr)$ with $U$ running over ${}^t\hspace{-0.02cm}P\backslash GL_3(\mathbb{Z})$ is a complete set of representatives for $G\backslash G_{0,3}$ (as it was mentioned prior to (\ref{tobound})). Thus, by the absolute convergence proved in Lemma~\ref{AH3}, we have the representation
\vspace{-0.1cm}
\begin{equation*}
\begin{aligned}
{\Omega}_{k,s,w,u}(Z)=&\sum_{M \in G \backslash Sp_3(\mathbb Z)}p_{-s,-w,-u}(Z)|_k[M]\\[-0.2em]
=&\sum_{\{C,D\}} \sum_{U \in \, \prescript{t}{}{P}\backslash GL_3(\mathbb{Z})} \sum_{S \in \mathscr{L}^*} p_{-s,-w,-u}(Z)|_k[\bigl(\begin{smallmatrix}
     I_3&S\\
     0&I_3\\
   \end{smallmatrix}\bigr)\bigl(\begin{smallmatrix}
     U&0\\
     0&^t{U}^{-1}\\
   \end{smallmatrix}\bigr)  \bigl(\begin{smallmatrix}
     A_0&B_0\\
     C&D\\
   \end{smallmatrix}\bigr)]\\[-0.3em]
=&\sum_{\{C,D\}} \sum_{U \in \,{^t}\hspace{-0.02cm}P \backslash GL_3(\mathbb{Z})} \biggl(\sum_{S \in \mathscr{L}^*} p_{-s,-w,-u}(Z+S)\biggr)|_k[\bigl(\begin{smallmatrix}
     U&0\\
     0&^t\hspace{-0.02cm}{U}^{-1}\\
   \end{smallmatrix}\bigr)  \bigl(\begin{smallmatrix}
     A_0&B_0\\
     C&D\\
   \end{smallmatrix}\bigr)].\\[-0.1em]
\end{aligned}
\vspace{-0.1cm}
\end{equation*}
Applying Proposition~\ref{lipschitz} to the inner series and the identity $e(TZ)|_k[\bigl(\begin{smallmatrix}
     U&0\\
     0&^t\hspace{-0.02cm}{U}^{-1}\\
\end{smallmatrix}\bigr)]=e(T[U]Z)$ we get
\vspace{-0.1cm}
\begin{equation*}
\begin{aligned}
  &-\frac{\Gamma_3(-w,-s,s+w+u)}{(-2\pi i)^{s+2w+3u}}{\Omega}_{k,s,w,u}(Z)\\[-0em]
=&\sum_{\{C,D\}} \sum_{U \in \, {^t}\hspace{-0.02cm}P\backslash GL_3(\mathbb{Z})} \sum_{T \in \mathscr{L}\cap \mathcal{P}_3}\hspace{-0.1cm}p_{-w,-s,s+w+u-2}(iT[W])e(TZ)|_k[\bigl(\begin{smallmatrix} U&0\\ 0&^t\hspace{-0.02cm}{U}^{-1}\\ \end{smallmatrix}\bigr) \bigl(\begin{smallmatrix} A_0&B_0\\ C&D\\ \end{smallmatrix}\bigr)]\\[-0em]
\end{aligned}
\end{equation*}
\begin{equation}\label{descomposition}
\begin{aligned}
&=\sum_{\{C,D\}} \sum_{U \in \, {^t}\hspace{-0.02cm}P\backslash GL_3(\mathbb{Z})} \sum_{T \in \mathscr{L}\cap \mathcal{P}_3}\hspace{-0.1cm}p_{-w,-s,s+w+u-2}(iT[W])e(T[U]Z)|_k[\bigl(\begin{smallmatrix} A_0&B_0\\ C&D\\ \end{smallmatrix}\bigr)]\\
&=\sum_{\{C,D\}} \sum_{U \in \, {^t}\hspace{-0.02cm}P\backslash GL_3(\mathbb{Z})} \sum_{T \in \mathscr{L}\cap \mathcal{P}_3}\hspace{-0.1cm}p_{-w,-s,s+w+u-2}(iT[U^{-1}][W])e(TZ)|_k[\bigl(\begin{smallmatrix} A_0&B_0\\ C&D\\ \end{smallmatrix}\bigr)].
\end{aligned}
\vspace{-0.1cm}
\end{equation}
The last equality is by the bijective map $T \mapsto T[U]$ onto $\mathscr{L}\cap \mathcal{P}_3$, for any $U \in GL_3(\mathbb{Z})$. Since ${}^t\hspace{-0.02cm}PW=WP$ and the conjugation by $W$ is a bijection onto $GL_3(\mathbb{Z})$, one gets
\vspace{-0.2cm}
\begin{equation*}
\Big\{ U^{-1}W /\; U \in {}^t\hspace{-0.02cm}P \backslash GL_3(\mathbb{Z}) \Big\}=\left( GL_3(\mathbb{Z})/\,{}^t\hspace{-0.02cm}P\right) W=W \, \left(GL_3(\mathbb{Z})/P\right);
\vspace{-0.2cm}
\end{equation*}
and by the invariance of Selberg's Eisenstein series under the action of $W$ we obtain
\vspace{-0.1cm}
\begin{equation*}
\begin{aligned}
& \sum_{T \in \mathscr{L}\cap \mathcal{P}_3} \sum_{U \in {^t}\hspace{-0.02cm}P\backslash GL_3(\mathbb{Z})} p_{-w,-s,s+w+u-2}(iT[U^{-1}][W])e(TZ)\\
=&\sum_{T \in \mathscr{L}\cap \mathcal{P}_3} \; \sum_{U \in \, GL_3(\mathbb{Z})/P} p_{-w,-s,s+w+u-2}(iT[W][U])e(TZ)\\
=&-e^{(s+2w+3u)\pi i/2} \sum_{T \in \mathscr{L}\cap \mathcal{P}_3} E(T[W]| w,s,-s-w-u+2)\, e(TZ)\\
=&-e^{(s+2w+3u)\pi i/2} \sum_{T \in \mathscr{L}\cap \mathcal{P}_3} E(T| w,s,-s-w-u+2) \, e(TZ).
\end{aligned}
\vspace{-0.1cm}
\end{equation*}
Using this computation of the double inner series in (\ref{descomposition}), we get the formula.
\end{proof}

\begin{proof} [Proof\nopunct]{\it of Proposition~\ref{newrepresentation}.} Note $ \mathcal{J} = \mathscr{L}\cap \mathcal{P}_3$. By Lemma~\ref{first half} and the invariance of Selberg's Eisenstein series one can write
\vspace{-0.1cm}
\begin{equation*}
\begin{aligned}
&\frac{\Gamma_3(-w,-s,s+w+u)}{(2\pi)^{s+2w+3u}} {\Omega}_{k,s,w,u}(Z)\\[-0.1em]
=&\sum_{\{C,D\}}\Bigl(\sum_{T \in \mathcal{J}/GL_3(\mathbb{Z})}\frac{1}{\varepsilon_T}\sum_{U \in GL_3(\mathbb{Z})}E(T[U]|w,s,-s-w-u+2)e(T[U]Z) \Bigr)|_k [\bigl(\begin{smallmatrix}
     A_0&B_0\\
     C&D\\
   \end{smallmatrix}\bigr)]\\[-0.1em]
=&\sum_{T \in \mathcal{J}/GL_3(\mathbb{Z})} \frac{1}{\varepsilon_T} \sum_{U \in GL_3(\mathbb{Z})} E(T[U]| w,s,-s-w-u+2)\sum_{\{C,D\}} e(T[U]Z)|_k [\bigl(\begin{smallmatrix} 
    A_0&B_0\\
    C&D\\
   \end{smallmatrix}\bigr)]\\[-0.1em]
=&\sum_{T \in \mathcal{J}/GL_3(\mathbb{Z})}\frac{1}{\varepsilon_T} E(T| w,s,-s-w-u+2) \sum_{\substack{ \{C,D\} \\ U \in GL_3(\mathbb{Z})}} e(TZ)|_k [\bigl(\begin{smallmatrix}
     U&0\\
     0&^t\hspace{-0.02cm}{U}^{-1}\\
\end{smallmatrix}\bigr)\bigl(\begin{smallmatrix}
     A_0&B_0\\
     C&D\\
   \end{smallmatrix}\bigr)].\\[-0.1em]
\end{aligned}
\vspace{-0.2cm}
\end{equation*}
The inner sum is over a set of representatives for $\mathcal{N} \backslash Sp_3(\mathbb{Z})$, hence it is $2P_{k,T}$.
\end{proof}

Our next goal is to find a larger convergence region for the series on the right-hand side of Proposition~\ref{newrepresentation}.

\vspace{0.2cm}
\begin{lemma}\label{sum}
Let $k>6$. The series 
\vspace{-0.2cm}
\begin{equation*}
\sum_{T\in \mathcal{J}/GL_3(\mathbb{Z})} \frac{1}{\varepsilon_T}E(T| s,w,u)P_{k,T}(Z)
\vspace{-0.2cm}
\end{equation*}
is absolutely uniformly convergent on $K \times \tilde{K}$, where $K$ and $\tilde{K}$ are compact subsets of $\{ (s,w,u) \in \mathbb{C}^3/ \Re(s)>1, \Re(w)>1, \Re(u)>3-k/2\}$ and $\mathcal{H}_3$ respectively.
\end{lemma}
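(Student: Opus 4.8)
The plan is to majorize each summand and reduce the question to a single convergent numerical series, invoking the Weierstrass $M$-test. First I would estimate the two factors separately. For the Poincaré series, Lemma~\ref{poincarebounded} bounds $|P_{k,T}(Z)|$ by $C_{K,k}(\det 2T)^{2-k/2}$ on compact subsets of $\mathcal F$; since $P_{k,T}$ is a weight-$k$ cusp form and any compact $\tilde K\subset\mathcal H_3$ meets only finitely many $Sp_3(\mathbb Z)$-translates of $\mathcal F$, the $Sp_3(\mathbb Z)$-invariance of $(\det Y)^{k/2}|P_{k,T}(Z)|$ promotes this to $|P_{k,T}(Z)|\le C_{\tilde K,k}(\det 2T)^{2-k/2}$ for all $T\in\mathcal J$, $Z\in\tilde K$. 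For the Eisenstein factor, inserting $p_{\sigma,\rho,\mu}(iY)=e^{(\sigma+2\rho+3\mu)\pi i/2}(\det Y_1)^{\sigma}(\det Y_2)^{\rho}(\det Y)^{\mu}$ into the definition (\ref{def eis}) and using $\det(T[\gamma])=\det T$ makes the exponential prefactors cancel, giving $E(T|s,w,u)=(\det T)^{-u}\sum_{\gamma}(\det(T[\gamma])_1)^{-s}(\det(T[\gamma])_2)^{-w}$; since all bases are positive reals this yields at once the clean majorization $|E(T|s,w,u)|\le E(T|\Re s,\Re w,\Re u)$ and, for real parameters, the factorization $E(T|\sigma,\rho,\mu)=(\det T)^{-\mu}E(T|\sigma,\rho,0)$ (compare (\ref{detE})).

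Second, I would choose each class in $\mathcal J/GL_3(\mathbb Z)$ to be represented inside Minkowski's domain $\mathcal R$ (finitely many reduced forms per class) and apply the growth estimate (\ref{E<p}), which gives $E(T|\sigma,\rho,0)\le C\,E(I_3|\sigma,\rho,0)\,t_1^{-\sigma}(\det T_2)^{-\rho}$, where in the notation (\ref{notation}) $t_1$ is the $(1,1)$-entry and $T_2=\bigl(\begin{smallmatrix}t_1&t_{12}\\ t_{12}&t_2\end{smallmatrix}\bigr)$. As $E(I_3|\sigma,\rho,0)$ is bounded on the (compact) projection of $K$, combining the two factors reduces the whole question to a numerical sum over the diagonal entries $t_1\le t_2\le t_3$ of the reduced forms. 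Here I would use the Minkowski comparisons $\det T\asymp t_1t_2t_3$ and $\det T_2\asymp t_1t_2$, together with the reduction inequalities $|2t_{12}|\le t_1$, $|2t_{13}|\le t_1$, $|2t_{23}|\le t_2$, which bound the number of reduced forms with a fixed diagonal by $O(t_1^2 t_2)$.

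Third, writing $\nu:=2-\tfrac{k}{2}-\mu$ and collecting all factors (the harmless weight $\varepsilon_T^{-1}\le1$ included), the tail of the series is dominated, up to a $K$-dependent constant, by
\begin{equation*}
\sum_{1\le t_1\le t_2\le t_3} t_1^{\,2+\nu-\sigma-\rho}\;t_2^{\,1+\nu-\rho}\;t_3^{\,\nu},
\end{equation*}
which I would sum from the inside out. The $t_3$-sum converges exactly when $\nu<-1$, i.e. $\mu>3-\tfrac{k}{2}$, and contributes $t_2^{\,\nu+1}$; the resulting $t_2$-sum has exponent $2+2\nu-\rho$, which is $<-1$ once $\rho>1$ (because then $\rho>7-k-2\mu$, the right side being $<1$ when $\mu>3-\tfrac{k}{2}$), and contributes $t_1^{\,3+2\nu-\rho}$; the final $t_1$-sum has exponent $5+3\nu-\sigma-2\rho$, which is $<-1$ because $\sigma+2\rho>3>12-\tfrac{3k}{2}-3\mu$ under $\sigma,\rho>1$ and $\mu>3-\tfrac{k}{2}$. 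Replacing $(\sigma,\rho,\mu)$ throughout by $(\min_K\Re s,\min_K\Re w,\min_K\Re u)$ — legitimate since each $t_i\ge1$ renders the exponents monotone — furnishes a single convergent majorant valid on all of $K$, so the $M$-test gives the claimed absolute uniform convergence.

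The main obstacle is the bookkeeping in the third step: one must verify that the single constraint $\Re(u)>3-\tfrac{k}{2}$, which is precisely what makes the outermost $t_3$-sum converge, together with $\Re(s),\Re(w)>1$, suffices to carry the two inner summations through, i.e. that the binding inequalities $7-k-2\mu<1$ and $12-\tfrac{3k}{2}-3\mu<3$ hold throughout the stated region. A secondary point is making the constants uniform over the compact set $K$, which the passage to the minima of the real parts and the Minkowski comparisons above take care of.
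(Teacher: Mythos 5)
Your proposal is correct and follows essentially the same route as the paper's proof: bound $|P_{k,T}|$ by Lemma~\ref{poincarebounded} after reducing $\tilde K$ to (translates of) $\mathcal F$, pass to Minkowski-reduced representatives, invoke the estimate (\ref{E<p}) from \cite[p.~51]{Mar3} together with the boundedness of $E(I_3|\cdot)$ on $K$, count reduced forms with fixed diagonal by $O(t_1^2t_2)$, and use the Minkowski comparisons $\det T \geq c_3\,t_1t_2t_3$, $\det T_2 \geq c_2\,t_1t_2$ to reach the triple sum over $t_1\le t_2\le t_3$. The only (cosmetic) difference is at the end: the paper absorbs $(\det T)^{2-k/2}$ into the Eisenstein series via (\ref{detE}) and decouples the triple sum into a product of three zeta values, whereas you keep that factor separate and sum the nested series from the inside out; both verifications of convergence are equivalent under the stated hypotheses.
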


\begin{proof} [Proof] Since each compact subset of $\mathcal{H}_3$ is covered by finitely many images of $\mathcal{F}$ under the action of $Sp_3(\mathbb{Z})$ (see \cite[p.~31]{Kli}), we consider that $\tilde{K}$ is a compact subset of $\mathcal{F}$. Let $(s,w,u,Z)$ be an arbitrary element on $K \times \tilde{K}$. By Lemma~\ref{poincarebounded} and property (\ref{detE}) we have
\vspace{-0.05cm}
\begin{equation}\label{f eq 1.1}
\sum_{T\in \mathcal{J}/GL_3(\mathbb{Z})}\abs[\Big]{ \frac{1}{\varepsilon_T}E(T| s,w,u)P_{k,T}(Z)} \, \leq \, C_{\tilde{K},k} \sum_{T\in \mathcal{J}/GL_3(\mathbb{Z})} \abs[\big]{E(T| s,w,u-2+k/2)}
\vspace{-0.1cm}
\end{equation}
where $C_{\tilde{K},k}$ only depends on $\tilde{K}$ and $k$. In any term of the sum we can assume that $T$ belongs to $\mathcal{R}$, Minkowski's reduced domain. Then, from \cite[p.~51]{Mar3} and the bound of $E(I_3| \Re(s),\Re(w),\Re(u)-2+k/2)$ on $K$, there exist two constants $C,C_K>0$ such that
\vspace{-0.1cm}
\begin{equation}\label{f eq 1}
\begin{aligned}
\sum_{T\in \mathcal{J}/GL_3(\mathbb{Z})} & \abs[\Big]{E(T|s,w,u-2+k/2)} \leq C E(I_3| \Re(s),\Re(w),\Re(u)-2+k/2)\\[-0.7em]
 &\;\;\;\;\;\;\;\;\;\;\; \times \sum_{T\in \mathcal{J}/GL_3(\mathbb{Z})}(\det T_1)^{-\Re(s)} (\det T_2)^{-\Re(w)} (\det T)^{-\Re(u)+2-k/2}\\
 \hspace{0.1cm} & \leq \, C\, C_K\, \sum_{T\in \mathcal{J}/GL_3(\mathbb{Z})}(\det T_1)^{-\Re(s)} (\det T_2)^{-\Re(w)} (\det T)^{-\Re(u)+2-k/2}.\\[-0.2em]
\end{aligned}
\end{equation}

\vspace{-0cm}As $\mathcal{R}$ is a fundamental domain of $\mathcal{P}_3/GL_3(\mathbb{Z})$, $\mathcal{R}\cap \mathcal{J}$ is a discrete set of representatives of $\mathcal{J}/GL_3(\mathbb{Z})$. Hence, the entries of any $T \in \mathcal{R}\cap \mathcal{J}$ as in (\ref{notation}) satisfy $1 \leq t_1 \leq t_2 \leq t_3$, $0 \leq 2t_{12} \leq t_1, 2\abs{t_{13}} \leq t_1$ and $0\leq 2t_{23} \leq t_2.$
 Thus, the number of matrices $T=\left(\begin{smallmatrix} 
    t_1&*&*\\
    *&t_2&*\\
    *&*&t_3\\
   \end{smallmatrix}\right) \in \mathcal{R}\cap \mathcal{J}$ with fixed diagonal $t_1,t_2,t_3$ is at most $12t_1^2t_2$ (see \cite[p.~12,13]{Kli}). Furthermore, from the same reference, we know the existence of some $c_2,c_3>0$ such that
\vspace{-0.1cm}
\begin{equation*}
\begin{aligned}
\hfilneg (\det T_1)^{-\Re(s)}& (\det T_2)^{-\Re(w)} (\det T)^{-\Re(u)+2-k/2} \hspace{10000pt minus 1fil}\\
&\;\; \leq c_2^{-\Re(w)} \, c_3^{-\Re(u)+2-k/2} \, t_1^{-\Re(s+w+u)+2-k/2} \, t_2^{-\Re(w+u)+2-k/2} \, t_3^{-\Re(u)+2-k/2}
\end{aligned}
\vspace{-0.1cm}
\end{equation*}
for all $T=\left(\begin{smallmatrix} 
    t_1&*&*\\
    *&t_2&*\\
    *&*&t_3\\
   \end{smallmatrix}\right) \in \mathcal{R}\cap \mathcal{J}$. Next, we group matrices in $\mathcal{R}\cap \mathcal{J}$ that have the same diagonal elements, focusing on the last series in (\ref{f eq 1}), and considering a constant $D_{K,\tilde{K},k}$ such that $12c_2^{-\Re(w)} c_3^{-\Re(u)+2-k/2} C_{\tilde{K},k} C C_K\leq D_{K,\tilde{K},k} \text{ for all } (s,w,u) \in K,$ we have from (\ref{f eq 1.1}) and (\ref{f eq 1}) the inequalities
\vspace{-0.02cm}
\begin{equation*}
\begin{aligned}
&\sum_{T\in \mathcal{J}/GL_3(\mathbb{Z})} \abs[\Big]{ \frac{1}{\varepsilon_T}E(T| s,w,u)P_{k,T}(Z)} \\
& \leq D_{K,\tilde{K},k} \sum_{t_1 \leq t_2\leq t_3 \in \mathbb{N}}t_1^{-\Re(s+w+u)+4-k/2}t_2^{-\Re(w+u)+3-k/2}t_3^{-\Re(u)+2-k/2}\\
&\leq D_{K,\tilde{K},k} \sum_{n\in \mathbb{N}}n^{-\Re(s+w+u)+4-k/2}\sum_{n\in \mathbb{N}}n^{-\Re(w+u)+3-k/2}\sum_{n\in \mathbb{N}}n^{-\Re(u)+2-k/2}\\
&\leq D_{K,\tilde{K},k} \, \zeta(\Re(s+w+u)+k/2-4) \zeta(\Re(w+u)+k/2-3) \zeta(\Re(u)+k/2-2).
\vspace{0.1cm}
\end{aligned}
\end{equation*}
This last expression is bounded by a positive constant which only depends on the compact subsets $K, \tilde{K}$ and $k$, and the lemma is proved.
\end{proof}

Let us define two sets
\vspace{-0.1cm}
\begin{equation*}
\begin{array}{r@{}l}
C\,&=\{(s,w,u) \in \mathbb{C}^3 / \Re(s)>1, \Re(w)>1, \Re(s+w+u)<k/2-1\}\text{, and}\\
WC\,&=\{ (s,w,u) \in \mathbb{C}^3 / \Re(s)>1, \Re(w)>1, \Re(u)>k/2+1\}.\\
\end{array}
\vspace{-0.1cm}
\end{equation*}
\begin{proposition}\label{cuspACWC} 
Let $k>22$.
\begin{itemize}
\item[i)]{For any fixed $Z\in \mathcal{H}_3$, the function $(s,w,u) \mapsto {\Omega}_{k,s,w,u}(Z)$ is holomorphic on $A \cup C \cup WC$.}
\item[ii)]{For any fixed $(s,w,u) \in C$, ${\Omega}_{k,s,w,u}$ is a Siegel cusp form in $\mathfrak{S}_{3,k}$.}
\end{itemize}
\end{proposition}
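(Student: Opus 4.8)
I would treat the three regions $A$, $C$, $WC$ in turn, using the new representation to handle $C$ (which simultaneously yields (ii)) and the first functional equation to transport the result to $WC$. Fix first $(s,w,u)\in C$ and examine the right-hand side of Proposition~\ref{newrepresentation}, which I denote $\Phi_{s,w,u}(Z)$. Its prefactor $\tfrac{2}{\pi^{3/2}}(-2\pi i)^{s+2w+3u}\big/\bigl(\Gamma(s+w+u-1)\Gamma(w+u-1/2)\Gamma(u)\bigr)$ is entire in $(s,w,u)$, since $1/\Gamma$ is entire; in particular at a fixed triple it is a finite constant. With the arguments $(w,s,-s-w-u+2)$, the defining inequalities of $C$ place the Eisenstein parameters exactly in the region of Lemma~\ref{sum}, so $\sum_{T}\varepsilon_T^{-1}E(T\mid w,s,-s-w-u+2)P_{k,T}(Z)$ converges absolutely and uniformly on compact subsets of $\mathcal{H}_3$. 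As each $P_{k,T}$ lies in the finite-dimensional space $\mathfrak{S}_{3,k}$, which is a closed subspace of the holomorphic functions on $\mathcal{H}_3$ under locally uniform convergence, the sum lies in $\mathfrak{S}_{3,k}$; hence $\Omega_{k,s,w,u}=\Phi_{s,w,u}$ is a cusp form, proving (ii), and $\Phi$ is holomorphic in $(s,w,u)$ on $C$.

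For (i) I would glue the holomorphic continuation region by region. The function is holomorphic on $A$ by Lemma~\ref{AH3} and already extends to $A\cup WA$ by Lemma~\ref{wequation}. Proposition~\ref{newrepresentation} gives $\Omega=\Phi$ on $B$, hence on the nonempty open set $B\cap C\subseteq(A\cup WA)\cap C$. Because the constraints involve only real parts, $A,WA,C$ are all convex, so $(A\cup WA)\cap C=(A\cap C)\cup(WA\cap C)$ is connected once the two pieces overlap, which holds since $A\cap WA\cap C\neq\varnothing$ for $k>22$. The identity theorem then forces $\Omega=\Phi$ throughout $(A\cup WA)\cap C$, so setting $\Omega:=\Phi$ on $C$ produces a single holomorphic function on $A\cup WA\cup C$.

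To reach $WC$ I would use the involution $\sigma(s,w,u)=(w,s,-s-w-u+k)$, which sends $WC$ onto $C$. Thus $\Omega_{k,s,w,u}(Z):=e^{-(s+2w+3u)\pi i}\Omega_{k,\sigma(s,w,u)}(Z)$ is holomorphic on $WC$, and only its agreement with the existing function on the overlap must be checked. Consider $H(s,w,u)=e^{(s+2w+3u)\pi i}\Omega_{k,s,w,u}(Z)-\Omega_{k,\sigma(s,w,u)}(Z)$, holomorphic on $(A\cup WA\cup C)\cap\sigma(A\cup WA\cup C)$. Using $\sigma(A)=WA$, $\sigma(WA)=A$, $\sigma(C)=WC$ together with $C\cap WC=\varnothing$ (immediate from $\Re(s+w+u)<k/2-1$ against $\Re(u)>k/2+1$), this domain collapses to $A\cup WA$. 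Lemma~\ref{wequation} gives $H=0$ on $A\cap WA$, and $A\cup WA$ is connected, so $H\equiv0$ there; since $WC\cap(A\cup WA\cup C)=WC\cap(A\cup WA)\subseteq A\cup WA$, the new definition agrees with the old one on the overlap. Therefore $\Omega$ continues holomorphically to $A\cup WA\cup C\cup WC$, which contains $A\cup C\cup WC$, proving (i).

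The main obstacle I anticipate is not any single computation but the bookkeeping of connectivity needed to invoke the identity theorem at each gluing: one must certify that the relevant intersections ($B\cap C$, $A\cap WA\cap C$, and $A\cap WA$) are nonempty for $k>22$ and that each glued region is connected. The step that makes the passage to $WC$ clean is the emptiness of $C\cap WC$, which reduces the consistency check precisely to the set $A\cup WA$ on which the functional equation of Lemma~\ref{wequation} is already available.
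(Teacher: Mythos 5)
Your proposal is correct, and its overall strategy coincides with the paper's: continue $\Omega_{k,s,w,u}(Z)$ to $C$ by identifying it, via Proposition~\ref{newrepresentation} and Lemma~\ref{sum}, with the Poincar\'e-series expansion on the overlap with $B$, and then transport to $WC$ through the involution $(s,w,u)\mapsto(w,s,-s-w-u+k)$ of Lemma~\ref{wequation}. The genuinely different ingredient is your proof of (ii): you deduce that the locally uniform limit of the partial sums of $\sum_{T}\varepsilon_T^{-1}E(T\mid w,s,-s-w-u+2)P_{k,T}$ lies in $\mathfrak{S}_{3,k}$ from the fact that a finite-dimensional subspace of the Hausdorff space of holomorphic functions on $\mathcal{H}_3$ (with the compact-open topology) is closed; the paper instead verifies $Sp_3(\mathbb{Z})$-invariance term by term and then checks the cuspidal condition by applying the Siegel $\Phi$-operator term by term, both exchanges being justified by Lemma~\ref{sum}. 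Your functional-analytic shortcut is valid (finite-dimensionality of $\mathfrak{S}_{3,k}$ is recorded in the introduction) and saves the explicit cuspidality computation, while the paper's verification is more self-contained. A second, smaller difference is the consistency check in the passage to $WC$: the paper verifies agreement of the two continuations only on $WA\cap WC$, leaving implicit the identity-theorem step needed to cover the remaining (non-empty) piece $A\cap WC$ of the overlap, whereas you show the difference $H$ vanishes identically on $A\cup WA$, which contains the entire overlap $WC\cap(A\cup WA\cup C)$ because $C\cap WC=\varnothing$; together with your explicit convexity and connectivity bookkeeping (the sets are tubes over convex bases, and the relevant triple intersections are non-empty for $k>22$), this makes the gluing fully airtight.
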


\begin{proof} [Proof]
In order to start, we consider a map from Proposition~\ref{newrepresentation}
\vspace{-0.1cm}
\begin{equation}\label{repreC}
(s,w,u,Z)\hspace{-0.05cm} \mapsto \hspace{-0.05cm} \frac{2(2\pi)^{s+2w+3u}}{\Gamma_3(\hspace{-0.04cm}-w\hspace{-0.04cm},\hspace{-0.04cm}-\hspace{-0.04cm}s,s\hspace{-0.04cm}+\hspace{-0.04cm}w\hspace{-0.04cm}+\hspace{-0.04cm}u)} \hspace{-0.13cm} \sum_{T\in \mathcal{J}/GL_3(\mathbb{Z})} \hspace{-0.12cm}\frac{1}{\varepsilon_T}\hspace{-0.05cm}E(T| w,s,\hspace{-0.04cm}-\hspace{-0.04cm}s\hspace{-0.04cm}-\hspace{-0.04cm}w\hspace{-0.04cm}-\hspace{-0.04cm}u\hspace{-0.04cm}+\hspace{-0.04cm}2)\hspace{-0.04cm}P_{k,T}(Z).
\vspace{-0.1cm}
\end{equation}

{\it i)} Since Selberg’s Eisenstein series $E(T|w,s,-s-w-u+2)$ is holomorphic on $C$ for any $T \in \mathcal{J}$ (see remark after (\ref{def eis})), we obtain by Lemma~\ref{sum} that the function in (\ref{repreC}) is holomorphic on $C$. On the other hand, the set $B\cap C$ is a non-empty open subset of $A\cap C$ where the holomorphic functions (\ref{repreC}) and ${\Omega}_{k,s,w,u}(Z)$ are equal whenever $k>22$ (see Proposition~\ref{newrepresentation} and Lemma~\ref{AH3}). Thus, we conclude ${\Omega}_{k,s,w,u}(Z)$ has an analytic continuation to $C$ defined by (\ref{repreC}).

For $(s,w,u) \in WC$ we consider the map used in Lemma~\ref{wequation}
\vspace{-0.05cm}
\begin{equation}\label{def123}
(s,w,u) \mapsto e^{-(s+2w+3u)\pi i}{\Omega}_{k,w,s,-s-w-u+k}(Z),
\vspace{-0.05cm}
\end{equation}
where an analytic continuation of ${\Omega}_{k,s,w,u}(Z)$ from $A$ to $A \cup WA$ was obtained. The function in (\ref{def123}) is holomorphic on $WC$ by the conclusion in the paragraph above and the equivalence $(s,w,u) \in WC$ iff $(w,s,-s-w-u+k)\in C$, getting an analytic continuation of ${\Omega}_{k,s,w,u}(Z)$ from $A\cup WA \cup C$ to $A\cup WA \cup C \cup WC$ via (\ref{def123}). Both continuations agree on $WA\cap WC$ since $(s,w,u) \in WA \cap WC$ iff $(w,s,-s-w-u+k)\in A \cap C$ and the fact that ${\Omega}_{k,s,w,u}(Z)$ is well-defined on $A \cap C$; obtaining the first point {\it i)}. From now on in this work, and by technical reasons, we drop the set $WA$.

{\it ii)} Since the Poincaré series $P_{k,T}$ is holomorphic on $\mathcal{H}_3$ and invariant under the action of $Sp_3(\mathbb{Z})$ for any $T \in \mathcal{J}$ (see Subsection \ref{cusp}), we obtain due to Lemma~\ref{sum} that ${\Omega}_{k,s,w,u}$ defined by (\ref{repreC}) is holomorphic on $\mathcal{H}_3$ and invariant under the action of $Sp_3(\mathbb{Z})$ too.

Now we show ${\Omega}_{k,s,w,u}$ satisfies the cuspidal condition on $C$. We have to prove that it is annihilated by the Siegel $\Phi-$operator, i.e.
\vspace{-0.05cm}
\begin{equation}\label{siegelop}
\lim_{t \to \infty} {\Omega}_{k,s,w,u} \left(\bigl(\begin{smallmatrix} 
    Z_2&0\\
    0&it\\
   \end{smallmatrix}\bigr) \right)=0 \;\;\;\;\;\; \text{ for all }Z_2 \in \mathcal{H}_2, (s,w,u) \in C.
\vspace{-0.1cm}
\end{equation}
By Lemma~\ref{sum} we can apply the limit in the series (\ref{repreC}) term by term, and since $P_{k,T}$ is a cusp form for any $T \in \mathcal{J}$, we have $\lim_{t \to \infty} P_{k,T} \left(\bigl(\begin{smallmatrix}
    Z_2&0\\
    0&it\\
   \end{smallmatrix}\bigr) \right)=0$ for all $Z_2\in \mathcal{H}_2, \, T \in \mathcal{J}$. Thus (\ref{siegelop}) holds and we conclude the second point {\it ii)}.
\end{proof}

We want to find a convergence region for the series in Proposition~\ref{newrepresentation} with $-\varepsilon<\Re(w)<1+\varepsilon, \varepsilon>0$. To this end we define $\phi(s)=s(1-s)(s-\frac{1}{2})(\frac{1}{2}-s)$ and $\xi(2s)=\pi^{-s}\Gamma(s)\zeta(2s)$, and prove the following result.

\vspace{0.2cm}
\begin{lemma}\label{negativerealpart}
Let $k>6$. The series
\vspace{-0.2cm}
\begin{equation*}
\phi(s)\xi(2s) \sum_{T\in \mathcal{J}/GL_3(\mathbb{Z})} \frac{1}{\varepsilon_T}E(T| s,w,u)P_{k,T}(Z)
\vspace{-0.2cm}
\end{equation*}
is absolutely uniformly convergent on $K\times \tilde{K}$, where $K$ and $\tilde{K}$ are compact subsets of $\{(s,w,u) \in \mathbb{C}^3 / \Re(s)>-\frac{1}{2}, \Re(w)>\frac{5}{2}, \Re(u)>4-\frac k2\}$ and $\mathcal{H}_3$ respectively.
\end{lemma}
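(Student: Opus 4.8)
The plan is to bound the series termwise via the Poincaré estimate of Lemma~\ref{poincarebounded} together with an explicit resolution of $E(T\mid s,w,u)$ in the variable $s$, reducing everything to a triple Dirichlet series of the kind handled in Lemma~\ref{sum}. Since every compact subset of $\mathcal{H}_3$ is covered by finitely many $Sp_3(\mathbb{Z})$-translates of $\mathcal{F}$, I may assume $\tilde K\subset\mathcal{F}$; then Lemma~\ref{poincarebounded} gives $\abs{P_{k,T}(Z)}\le C_{\tilde K,k}(\det 2T)^{2-k/2}$, and, choosing representatives $T\in\mathcal{R}\cap\mathcal{J}$ and absorbing $\varepsilon_T$ exactly as in Lemma~\ref{sum}, it suffices to prove the absolute uniform convergence on $K$ of $\sum_{T\in\mathcal{R}\cap\mathcal{J}}\abs{\phi(s)\xi(2s)E(T\mid s,w,u)}(\det T)^{2-k/2}$.

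Next I would expose the $s$-dependence of $E(T\mid s,w,u)$. Organizing the coset sum (\ref{def eis}) over complete flags (line $\subset$ plane) in $\mathbb{Z}^3$, the plane $V$ determines the $2\times2$ corner determinant (the $3\times3$ one being $\det T$), while the inner sum over lines in $V$ produces $\zeta(2s)^{-1}$ times the Epstein zeta $\mathcal{Z}(G_V,s)$ of the induced binary form $G_V$; this yields $E(T\mid s,w,u)=(\det T)^{-u}\zeta(2s)^{-1}\sum_V(\det G_V)^{-w}\mathcal{Z}(G_V,s)$. Inserting $2\mathcal{Z}(G_V,s)=(\Im\tau_{G_V}/\sqrt{\det G_V})^{s}\zeta_{\mathbb{Z}^2}(s,\tau_{G_V})$, the meromorphic splitting $\zeta_{\mathbb{Z}^2}=2\zeta(2s)+2\pi^{1/2}\Gamma(s-\tfrac12)\zeta(2s-1)\Gamma(s)^{-1}t^{1-2s}+2\zeta^*_{\mathbb{Z}^2}$, and multiplying by $\xi(2s)=\pi^{-s}\Gamma(s)\zeta(2s)$ (which clears the $\zeta(2s)^{-1}$), I obtain $\xi(2s)E(T\mid s,w,u)$ as a sum over $V$ of three terms with respective prefactors $\xi(2s)$, $\xi(2s-1)t_V^{1-2s}$ and $\pi^{-s}\Gamma(s)\zeta^*_{\mathbb{Z}^2}(s,\tau_{G_V})$, each $\tau_{G_V}$ taken in the standard fundamental domain so that $t_V=\Im\tau_{G_V}\ge\sqrt3/2$ and $(\Im\tau_{G_V}/\sqrt{\det G_V})^{s}$ equals the arithmetic minimum of $G_V$ to the power $-s$. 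On $\Re(s)>-\tfrac12$ the only poles of these pieces lie at $s=0,\tfrac12,1$ (from the $\Gamma$- and $\zeta$-factors and the simple pole of the binary Epstein zeta at $s=1$), and the zeros of $\phi(s)=s(1-s)(s-\tfrac12)(\tfrac12-s)$ cancel them; hence $\phi(s)\xi(2s)$, $\phi(s)\xi(2s-1)$ and $\phi(s)\pi^{-s}\Gamma(s)$ are holomorphic and bounded on $K$.

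Then I would estimate the three pieces and re-sum over $T$. For each fixed $T$ the two constant-term pieces are maximal-parabolic sums over planes; reducing each $G_V$ and using $1\le(\text{minimum of }G_V)\le\sqrt{\det G_V}$, they are dominated by the smallest plane $\langle e_1,e_2\rangle$, i.e. by expressions of the shape $(\det T_1)^{-a}(\det T_2)^{-b}(\det T)^{-u}$ controlled by the Minkowski estimate \cite[p.~51]{Mar3}. Substituting $\det T_1\asymp t_1$, $\det T_2\asymp t_1t_2$, $\det T\asymp t_1t_2t_3$ and the bound $12t_1^2t_2$ for the number of reduced $T$ with given diagonal (\cite[p.~12,13]{Kli}), the outer sum becomes a product of three one-variable Dirichlet series in $t_1\le t_2\le t_3$, bounded by a product of zeta values exactly as at the end of Lemma~\ref{sum}. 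The remainder piece I would instead control with (\ref{zeta*}): as $t_V\ge\sqrt3/2$ it gives $\abs{\zeta^*_{\mathbb{Z}^2}(s,\tau_{G_V})}\ll t_V^{-\max\{1,2\Re(s)\}-\varepsilon}$ with an $\Im(s)$-factor bounded on $K$, and rewriting $t_V$ through $\det G_V$ and the minimum shows this term is again dominated by $(\det G_V)^{-\Re(w)-\Re(s)/2}$, hence by the same triple Dirichlet series.

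The hard part will be verifying that the hypotheses $\Re(w)>\tfrac52$ and $\Re(u)>4-\tfrac k2$ place $K$ inside the common region of convergence of these three Dirichlet series, and that the binding constraint is $\Re(w)>\tfrac52$, coming from the reflected constant-term piece — the one carrying $t_V^{1-2s}$. Its exponents are precisely those of $E(T\mid 1-s,s+w-\tfrac12,u)$, so the plane-sum converges only when $\Re(w)+\Re(s)>2$; as $\Re(s)$ is allowed down to $-\tfrac12$ this forces $\Re(w)>\tfrac52$, whereas the other two pieces need only $\Re(w)>\tfrac74$. A second delicate point is that each $\tau_{G_V}$ must be reduced into the cusp before (\ref{zeta*}) is applied, since $\zeta^*_{\mathbb{Z}^2}$, unlike $\zeta_{\mathbb{Z}^2}$, is not $SL_2(\mathbb{Z})$-invariant and is small only high in the cusp; keeping this reduction consistent across the three pieces is the main bookkeeping obstacle.
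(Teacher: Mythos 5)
Your proposal follows essentially the same route as the paper's proof: the $P_{2,1}$ (flag) decomposition of $E(T\mid s,w,u)$ into binary Epstein/Eisenstein data, the splitting of $\zeta_{\mathbb{Z}^2}$ into constant, reflected and $\zeta^*_{\mathbb{Z}^2}$ pieces with $\phi(s)\xi(2s)$, $\phi(s)\xi(2s-1)$, $\phi(s)\pi^{-s}\Gamma(s)$ absorbing the poles, Minkowski reduction of the binary forms $T[A]_2$ carried out \emph{before} applying the estimate (\ref{zeta*}), and the same binding constraints ($\Re(s+w)>2$ from the reflected piece, hence $\Re(w)>5/2$; $\Re(u)>4-k/2$ from the $T$-sum). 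The only difference is cosmetic, in the final resummation: you extract the leading $(\det T_2)$-power and redo the diagonal count of Lemma~\ref{sum}, whereas the paper bounds the plane-sum uniformly over reduced $T$ by Maass's Hecke-character estimate for $\mu_r$ (\cite[p.~142]{Maa}, valid for $r>3/2$) and then invokes convergence of $\sum_T(\det T)^{-\Re(u)+2-k/2}$; the ``domination by the smallest plane'' you assert is exactly what that Maass bound delivers, so your citation of \cite[p.~51]{Mar3} should really point there.
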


\begin{proof} [Proof]
For any $T\in \mathcal{J}/GL_3(\mathbb{Z})$ we notice that $E(T| s,w,u)$ can be written in terms of the Eisenstein series $E_s$ defined at Subsection \ref{eisenstein}, namely
\vspace{-0.1cm}
\begin{equation*}\label{E3Es}
(\det T)^u E(T| s,w,u) = E(T| s,w,0) = \sum_{A \in GL_3(\mathbb{Z})/P_{2,1}}E_s({T[A]_2}^0) (\det T[A]_2)^{-s/2-w}
\vspace{-0.15cm}
\end{equation*}
where $P_{2,1}=\left\{\bigl(\begin{smallmatrix} 
    U_2&*\\
    0&U_1\\
   \end{smallmatrix}\bigr) \in GL_3(\mathbb{Z}) / \  U_j \in GL_j(\mathbb{Z}) \right\}$ is another parabolic subgroup of $GL_3(\mathbb{Z})$ (see \cite[p.~243]{Ter2}), $Y^0 := (\det{Y})^{-1/2} \, Y \in \mathcal{SP}_2$ for any $Y \in {\mathcal P}_2$ and $E_s$ is evaluated using the identification between $\mathcal{SP}_2$ and $\mathcal{H}$ explained in (\ref{identification}). Next, we write $E_s\big({T[A]_2}^0\big)$ in terms of $\zeta_{\mathbb{Z}^2}^*$ using the relations in Subsection \ref{eisenstein}
\vspace{-0.1cm}
\begin{equation*}
\begin{aligned}
&2\zeta(2s)E_s\big({T[A]_2}^0\big)=2 \, \mathcal{Z}\big({T[A]_2}^0,s\big)=\Im({\tau_{{T[A]_2}^0}})^s \, \zeta_{\mathbb{Z}^2}(s,{\tau_{{T[A]_2}^0}})\\[-0.1em]
=&\, \Im({\tau_{{T[A]_2}^0}})^s \Bigl(2\zeta(2s)+\frac{2\pi^{1/2}\Gamma(s-1/2)\zeta(2s-1)}{\Gamma(s)}\Im(\tau_{{T[A]_2}^0})^{1-2s}+2\zeta^*_{\mathbb{Z}^2}(s,\tau_{{T[A]_2}^0})\Bigr).
\end{aligned}
\vspace{-0.05cm}
\end{equation*}
Therefore
\vspace{-0.05cm}
\begin{equation*}
\begin{aligned}
\phi(s)\xi(2s)&E(T| s,w,u)=(\det T)^{-u}\sum_{A \in GL_3(\mathbb{Z})/P_{2,1}}\Im(\tau_{{T[A]_2}^0})^s (\det T[A]_2)^{-s/2-w}\\[-0.6em]
&\times \left\{\phi(s)\xi(2s)+\phi(s)\xi(2s-1)\Im(\tau_{{T[A]_2}^0})^{1-2s}+\phi(s) \pi^{-s}\Gamma(s)\zeta^*_{\mathbb{Z}^2}(s,\tau_{{T[A]_2}^0}) \right\},
\end{aligned}
\vspace{-0.1cm}
\end{equation*}
and the series that we are going to study is
\vspace{-0.1cm}
\begin{equation}\label{asd}
\begin{aligned}
&\sum_{T\in \mathcal{J}/GL_3(\mathbb{Z})} \frac{1}{\varepsilon_T}(\det T)^{-u} P_{k,T}(Z) \sum_{A \in GL_3(\mathbb{Z})/P_{2,1}}\Im(\tau_{{T[A]_2}^0})^s (\det T[A]_2)^{-s/2-w}\\
&\;\;\;\;\;\times \left\{ \phi(s)\xi(2s)+\phi(s)\xi(2s-1)\Im(\tau_{{T[A]_2}^0})^{1-2s}+\phi(s) \pi^{-s}\Gamma(s)\zeta^*_{\mathbb{Z}^2}(s,\tau_{{T[A]_2}^0}) \right\}.
\end{aligned}
\end{equation}
\vspace{-0.2cm}

We prove the absolute convergence of (\ref{asd}) for a fixed $(s,w,u) \in K$ and an arbitrary $Z \in \tilde{K}$, and leave the case for the uniform convergence in compact subsets $K$ and $\tilde{K}$ to the reader, as it directly generalizes our proof.

The convergence of (\ref{asd}) holds when considering $Z$ within Siegel's fundamental domain $\mathcal{F}$, due to the invariance of $P_{k,T}$. Moreover, as explained in Lemma~\ref{sum}, $\tilde{K}$ can be assumed to be a compact subset of $\mathcal{F}$. Then, by Lemma~\ref{poincarebounded} there exists $C_{\tilde{K},k}>0$ such that $\abs[\big]{P_{k,T}(Z)} \leq C_{\tilde{K},k} (\det T)^{2-k/2}$ for all $T \in \mathcal{J}, Z \in \tilde{K}$. Thus, if we take the series of the absolute value in (\ref{asd}), use the facts above and recall that $\phi(s)\xi(2s)$ and $\phi(s)\xi(2s-1)$ are entire, we are able to reduce this proof to show the convergence of
\vspace{-0.05cm}
\begin{equation}\label{equation1}
\sum_{T\in \mathcal{J}/GL_3(\mathbb{Z})} \hspace{-0.5cm} (\det T)^{-\Re(u)+2-k/2} \hspace{-0.6cm} \sum_{A \in GL_3(\mathbb{Z})/P_{2,1}} \hspace{-0.5cm} \Im(\tau_{ {T[A]_2}^0})^{\Re(s)} (\det T[A]_2)^{-\Re(s/2+w)} \delta(T[A]_2,s)
\vspace{-0.02cm}
\end{equation}
where $\delta(L,s): \mathcal{P}_2 \to \mathbb{C}$ is one of the following three maps for any fixed $s$:
\vspace{-0.1cm}
\begin{equation*}
1, \quad \quad \Im(\tau_{L^0})^{1-2\Re(s)} \quad \quad \text{and} \quad \quad \abs[\big]{\zeta^*_{\mathbb{Z}^2}(s, \tau_{L^0})}.
\vspace{-0.15cm}
\end{equation*}

Before to work in (\ref{equation1}), we show here that any real power of $\Im(\tau_{{T[A]_2}^0})$ is bounded by a suitable power of $\det{\left(T[A]_2\right)}$. We can assume that the matrix $T[A]_2$ belongs to $\mathcal{R}_2 \cap \mathcal{J}_2$ for all $T \in \mathcal{J}/GL_3(\mathbb{Z}), A \in GL_3(\mathbb{Z})/P_{2,1}$ after a convenient choice of representative for $A$. Since for all $M=\bigl(\begin{smallmatrix}
     n&*\\
     *&m\\
 \end{smallmatrix}\bigr) \in \mathcal{R}_2 \cap \, \mathcal{J}_2$ one has $1\leq n\leq m$ and $C^{1/2}\leq (\det M)^{1/2}/n$ for some $C>0$ (see \cite[p.~13]{Kli}), one obtains
\vspace{-0.1cm}
\begin{equation*}
 C^{1/2}\leq \frac{ (\det T[A]_2)^{1/2}} {T[A]_{11}} = \Im(\tau_{{T[A]_2}^0}) \leq (\det T[A]_2)^{1/2}.
\vspace{-0.1cm}
\end{equation*}
Thus, we can conclude that for any $T \in \mathcal{J}/GL_3(\mathbb{Z}), A \in GL_3(\mathbb{Z})/P_{2,1}$
\vspace{-0.05cm}
\begin{equation}\label{imapower}
\Im(\tau_{{T[A]_2}^0})^l \leq \left\{ \;
\begin{array}{ll}
     \hspace{-0.2cm}(\det T[A]_2)^{l/2} & \text{if $l \geq 0$} \\
     \hspace{-0.2cm} C^{l/2} & \text{if $l<0$}.\\
\end{array} 
\right.
\vspace{-0.15cm}
\end{equation}

On the other hand, the estimate (\ref{zeta*}) yields the existence of a positive constant $C_s$ (which depends continuously on $s$) such that for all $\varepsilon >0$
\vspace{-0.05cm}
\begin{equation}\label{zetastarbound}
\abs[\big]{\zeta^*_{\mathbb{Z}^2}(s,\tau_{{T[A]_2}^0})}\leq C_s \, \Im(\tau_{{T[A]_2}^0})^{-\max \{1,2\Re(s)\}-\varepsilon}.
\end{equation}

Now we use (\ref{imapower}) and (\ref{zetastarbound}) to bound the three different inner series in (\ref{equation1})  (i.e. indexed by $A \in GL_3(\mathbb{Z})/P_{2,1}$). In the first case $\delta(L,s)=1$, we use (\ref{imapower}) and have
\vspace{-0.1cm}
\begin{equation}\label{bound1}
\begin{aligned}
\sum_{A}\Im(\tau_{ {T[A]_2}^0})^{\Re(s)} \, (\det T&[A]_2)^{-\Re(s/2+w)} \\[-0.6em]
&\leq \left\{
\begin{array}{ll}
   C^{\Re(s)/2} \sum_{A} (\det T[A]_2)^{-\Re(s/2+w)} & \text{ if } \Re(s)\leq0, \\
   \sum_{A} (\det T[A]_2)^{-\Re(w)}    & \text{ if } \Re(s)>0.\\
\end{array}
\right.
\end{aligned}
\end{equation}
For the second case $\delta(L,s)=\Im(\tau_{L^0})^{1-2\Re(s)}$, we use (\ref{imapower}) again and obtain
\vspace{-0.05cm}
\begin{equation}\label{bound2}
\begin{aligned}
\sum_{A}\Im(\tau_{ {T[A]_2}^0})^{1-\Re(s)} \, &(\det T[A]_2)^{-\Re(s/2+w)}\\[-0.6em]
& \leq \left\{
\begin{array}{ll}
  \sum_{A} (\det T[A]_2)^{1/2-\Re(s+w)} &\text{ if } \Re(s)\leq1, \\
  C^{1/2-\Re(s)/2} \sum_{A} (\det T[A]_2)^{-\Re(s/2+w)}  & \text{ if } \Re(s)>1.\\
\end{array} 
\right.
\end{aligned}
\vspace{-0.1cm}
\end{equation}
For the last case $\delta(L,s)=\abs[\big]{\zeta^*_{\mathbb{Z}^2}(s, \tau_{L^0})}$, we use (\ref{zetastarbound}) and then the identity $\Re(s)-\max \{1,\,2\Re(s)\}-\varepsilon<0$ for all $s\in \mathbb{C}$ in (\ref{imapower}) obtaining
\vspace{-0.05cm}
\begin{equation}\label{bound3}
\begin{aligned}
\sum_{A}\Im(\tau_{ {T[A]_2}^0})^{\Re(s)} \,(\det &\,T[A]_2)^{-\Re(w+s/2)} \, \abs[\big]{\zeta^*_{\mathbb{Z}^2}(s,\tau_{{T[A]_2}^0})} \\[-0.5em]
\leq & \, C_s \, C^{\Re(s)/2-\max \{1/2,\Re(s)\}-\varepsilon/2} \, \sum_{A} \, (\det T[A]_2)^{-\Re(s/2+w)}.
\end{aligned}
\vspace{-0.2cm}
\end{equation}

Applying the inequalities (\ref{bound1}), (\ref{bound2}) and (\ref{bound3}) to (\ref{equation1}) results in five series. From this fact we are able to conclude that the convergence of the series (\ref{equation1}) follows from the convergence of these three series:
\vspace{-0.1cm}
\begin{equation}\label{3series}
\begin{aligned}
 &\sum_{T} (\det T)^{-\Re(u)+2-k/2} \sum_{A} (\det T[A]_2)^{-\Re(w)}&&\; \text{ if } \Re(s) \geq 0,\\[-0.1em]
 &\sum_{T} (\det T)^{-\Re(u)+2-k/2} \sum_{A} (\det T[A]_2)^{1/2-\Re(s+w)} &&\; \text{if } \Re(s) \leq 1, \text{ and} \\[-0.1em]
 &\sum_{T} (\det T)^{-\Re(u)+2-k/2} \sum_{A} (\det T[A]_2)^{-\Re(s/2+w)} &&\; \text{for every } s,\\[-0.1em]
\end{aligned}
\vspace{-0.1cm}
\end{equation}
where the index $T$ (resp. $A$) runs throughout the quotient $\mathcal{J}/GL_3(\mathbb{Z})$ (resp. $GL_3(\mathbb{Z})/P_{2,1}$).

The three inner series in (\ref{3series}) is related to the following Hecke character
\vspace{-0.1cm}
\begin{equation*}
\mu_{r}(Y) := (\det Y)^{2r/3} \sum_{A \in GL_3(\mathbb{Z})/P_{2,1}} (\det Y[A]_2)^{-r}
\vspace{-0.1cm}
\end{equation*}
for $r>3/2, Y \in \mathcal{P}_3$. In \cite[p.~142]{Maa} Maass shows the existence of $C_r>0$ such that
\vspace{-0.1cm}
\begin{equation*}
\mu_r(Y)<C_r \left(\frac{\tr(Y)}{(\det Y)^{1/3}} \right)^r \mu_r(I_3)
\vspace{-0.1cm}
\end{equation*}
for all $Y \in \mathcal{P}_3$, where $C_r$ only depends on $r$ and $\mu_r(I_3)$ is convergent for $r>3/2$. Since $T \in \mathcal{J}/GL_3(\mathbb{Z}),$ we can assume $T \in \mathcal{R} \cap \mathcal{J}$. Then, for $T$ as in (\ref{notation}) we have $\tr (T)=t_1+t_2+t_3\leq t_1t_2t_3 \leq C \det T$ for some positive constant $C$ which does not depend on $T$ (see \cite[p.~13]{Kli}). The inequality of $\mu_r$ implies that our inner series satisfies
\begin{equation*}
\sum_{A \in GL_3(\mathbb{Z})/P_{2,1}} (\det T[A]_2)^{-r} < C^r \, C_r \, \mu_r(I_3) 
\vspace{-0.1cm}
\end{equation*}
for all $T \in \mathcal{J}/GL_3(\mathbb{Z})$, and that such bound only depends on $r$ whenever $r>3/2$.

Now we use this remark on the three inner series of (\ref{3series}). In the first case, if $\Re(s) \geq 0$ the inner series converges for $\Re(w)>3/2$. In the second case, if $\Re(s) \leq 1$ the inner series converges for $\Re(s+w)>2$. In the third case, the inner series converges for $\Re(s/2+w)>3/2$. Lemma's hypothesis fulfills these conditions.

Thus, the three series listed in (\ref{3series}) are bounded by some constants which depend continuously on $\Re(s)$ and $\Re(w)$ times the series
\vspace{-0.15cm}
\begin{equation*}
\sum_{T\in \mathcal{J}/GL_3(\mathbb{Z})} (\det T)^{-\Re(u)+2-k/2}.
\vspace{-0.2cm}
\end{equation*}
By \cite[p.~149]{Koh}, this series converges for $\Re(u)>4-k/2$.
\end{proof}

Since $\phi(w)\xi(2w)$ is entire, the function $\phi(w)\xi(2w){\Omega}_{k,s,w,u}(Z)$ satisfies the conclusions of Proposition~\ref{cuspACWC}. In the following lemma we show that such conclusions also hold on a set where $-\varepsilon<\Re(w)<1+\varepsilon, \varepsilon>0$. Let
\vspace{-0.1cm}
\begin{equation*}
D=\{(s,w,u) \in \mathbb{C}^3 / \Re(s)>5/2, \Re(w)>-1/2, \Re(s+w+u)<k/2-2 \}.
\vspace{-0.1cm}
\end{equation*}

\begin{lemma}\label{repreonD}
Let $k>22$. The function $\phi(w)\xi(2w){\Omega}_{k,s,w,u}(Z)$ admits an analytic continuation to $D \times \mathcal{H}_3$.
\end{lemma}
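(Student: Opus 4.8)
The plan is to continue $\phi(w)\xi(2w){\Omega}_{k,s,w,u}(Z)$ by means of the Eisenstein--Poincaré representation already available on $C$, applying Lemma~\ref{negativerealpart} after a permutation of the three spectral parameters. Recall from Proposition~\ref{cuspACWC} that on $C\times\mathcal{H}_3$ the kernel is given by the expression (\ref{repreC}), in which Selberg's Eisenstein series enters as $E(T| w,s,-s-w-u+2)$. I would therefore apply Lemma~\ref{negativerealpart} with the substitution sending its first slot to $w$, its second slot to $s$, and its third slot to $-s-w-u+2$; under this substitution the entire multiplier $\phi(\cdot)\xi(2\,\cdot)$ attached to the first slot becomes exactly $\phi(w)\xi(2w)$, as required by the statement.

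The next step is to read off the resulting region of convergence. The three inequalities of Lemma~\ref{negativerealpart} on the first, second and third slots translate into $\Re(w)>-\tfrac12$, $\Re(s)>\tfrac52$ and $\Re(-s-w-u+2)>4-\tfrac k2$; the last of these is equivalent to $\Re(s+w+u)<\tfrac k2-2$. These are precisely the conditions defining $D$, so Lemma~\ref{negativerealpart} shows that
\begin{equation*}
\phi(w)\xi(2w)\sum_{T\in\mathcal{J}/GL_3(\mathbb{Z})}\frac{1}{\varepsilon_T}E(T| w,s,-s-w-u+2)\,P_{k,T}(Z)
\end{equation*}
converges absolutely and uniformly on compact subsets of $D\times\mathcal{H}_3$, hence defines a holomorphic function there.

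It remains to treat the prefactor in (\ref{repreC}). Using (\ref{Gamma3property}) one computes $\Gamma_3(-w,-s,s+w+u)=\pi^{3/2}e^{(s+2w+3u)\pi i/2}\,\Gamma(u)\,\Gamma(w+u-1/2)\,\Gamma(s+w+u-1)$, so the factor $2(2\pi)^{s+2w+3u}/\Gamma_3(-w,-s,s+w+u)$ is a product of the entire functions $(2\pi)^{s+2w+3u}$, $e^{-(s+2w+3u)\pi i/2}$ and the three reciprocal gamma functions, and is therefore entire on $\mathbb{C}^3$. Consequently the product of this prefactor with the series above is holomorphic on all of $D\times\mathcal{H}_3$.

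Finally I would invoke the identity theorem. The open set $D\cap C$ is non-empty (it contains, for instance, all $(s,w,u)$ with $\Re(s)>\tfrac52$, $\Re(w)>1$ and $\Re(s+w+u)<\tfrac k2-2$), and there the representation (\ref{repreC}) already equals ${\Omega}_{k,s,w,u}(Z)$, so the holomorphic function just constructed coincides with $\phi(w)\xi(2w){\Omega}_{k,s,w,u}(Z)$ on $(D\cap C)\times\mathcal{H}_3$. Since $\phi(w)\xi(2w){\Omega}_{k,s,w,u}(Z)$ is already holomorphic on $(A\cup C)\times\mathcal{H}_3$ by Proposition~\ref{cuspACWC} and the fact that $\phi(w)\xi(2w)$ is entire, the new function furnishes its analytic continuation to $D\times\mathcal{H}_3$. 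The only genuine bookkeeping is keeping the permutation of parameters straight and confirming that $D\cap C$ is non-empty; the real analytic content---the convergence in the enlarged region $\Re(w)>-\tfrac12$---is supplied entirely by Lemma~\ref{negativerealpart}, so I expect no serious obstacle beyond this.
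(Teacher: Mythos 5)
Your proposal is correct and follows essentially the same route as the paper's proof: the Eisenstein--Poincar\'e representation (\ref{repreC}) valid on $C\times\mathcal{H}_3$, Lemma~\ref{negativerealpart} applied with the three slots permuted to $(w,s,-s-w-u+2)$ (which is exactly what produces the region $D$ and the multiplier $\phi(w)\xi(2w)$), and the identity theorem on the non-empty open set $(C\cap D)\times\mathcal{H}_3$. The one detail worth adding is that to pass from absolute uniform convergence to holomorphy of the sum you also need each term $\phi(w)\xi(2w)E(T|w,s,-s-w-u+2)$ to be holomorphic on $D$ (the defining series for $E$ does not converge there, so this rests on the analytic continuation of Selberg's Eisenstein series with its poles cancelled by $\phi(w)\xi(2w)$), which the paper secures by citing \cite[p.~248]{Ter2}, Lemma~1.5.3.
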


\begin{proof} [Proof]
By Proposition~\ref{cuspACWC}, the function $\phi(w)\xi(2w){\Omega}_{k,s,w,u}(Z)$ is holomorphic in the variables $(s,w,u,Z) \in C \times \mathcal{H}_3$ and is represented by
\begin{equation}\label{representationD}
\phi(w)\xi(2w)\frac{2(2\pi)^{s+2w+3u}}{\Gamma_3(-w,-s,s+w+u)}\hspace{-0.1cm}\sum_{T\in \mathcal{J}/GL_3(\mathbb{Z})}\hspace{-0.1cm}\frac{1}{\varepsilon_T}E(T| w,s,-s-w-u+2) P_{k,T}(Z).
\vspace{-0.05cm}
\end{equation}

Here we prove the series in (\ref{representationD}) is holomorphic separately in the variables $(s,w,u)\in D$ and $Z \in \mathcal{H}_3$. Firstly, we fix $Z$ in $\mathcal{H}_3$; for any $T \in \mathcal{J}$ the function $\phi(w)$$\xi(2w)$ $E(T| w,s,-s-w-u+2)$ is holomorphic on $D$ (see  \cite[p.~248]{Ter2}, Lemma~$1.5.3$), then by Lemma~\ref{negativerealpart} the function in (\ref{representationD}) is holomorphic on $D$. Now, we fix $(s,w,u) \in D$; the same argument works for the variable $Z$ because for any $T\in \mathcal{J}$ the function $P_{k,T}$ is holomorphic on $\mathcal{H}_3$, then by Lemma~\ref{negativerealpart} we obtain that (\ref{representationD}) is holomorphic on $\mathcal{H}_3$. That is, the function (\ref{representationD}) is holomorphic on $D \times \mathcal{H}_3$.
 
Since $C$ and $D$ have non-empty intersection, we conclude that $(s,w,u,Z) \mapsto$ $\phi(w)\xi(2w){\Omega}_{k,s,w,u}(Z)$ has an analytic continuation to $D \times \mathcal{H}_3$ defined by (\ref{representationD}).
\end{proof}

\begin{lemma}\label{Secondfunctionalequationa}
Let $k>22$. The series ${\Omega}_{k,s,w,u}(Z)$ satisfies
\begin{equation*}
\xi(2w){\Omega}_{k,s,w,u}(Z) = \xi(2-2w) {\Omega}_{k,s+w-1/2,1-w,w+u-1/2}(Z)
\end{equation*}
for all $(s,w,u,Z)\in D \times \mathcal{H}_3$ such that $-1/2<\Re(w)<0.$
\end{lemma}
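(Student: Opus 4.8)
The plan is to prove the identity by expressing both sides through the Poincaré-series representation of the kernel and then applying, term by term, the functional equation for Selberg's Eisenstein series recorded in Subsection~\ref{defSelberg}. Throughout, write $(s',w',u')=(s+w-1/2,\,1-w,\,w+u-1/2)$ for the parameters appearing on the right-hand side. First I would record the arithmetic identities produced by this substitution: a direct computation gives $s'+2w'+3u'=s+2w+3u$, $\;s'+w'+u'=s+w+u$, and $-s'-w'-u'+2=-s-w-u+2$, so that in particular
\begin{equation*}
E(T| w',s',-s'-w'-u'+2)=E(T| 1-w,\,s+w-1/2,\,-s-w-u+2).
\end{equation*}
Using the closed form (\ref{Gamma3property}) for $\Gamma_3$, I would then check that the scalar prefactor is invariant under the substitution, i.e.
\begin{equation*}
\frac{2(2\pi)^{s'+2w'+3u'}}{\Gamma_3(-w',-s',s'+w'+u')}=\frac{2(2\pi)^{s+2w+3u}}{\Gamma_3(-w,-s,s+w+u)},
\end{equation*}
which amounts to comparing the three Euler-$\Gamma$ arguments on each side; in both cases they are $u$, $w+u-1/2$ and $s+w+u-1$ (possibly permuted), while the exponential and power factors coincide because $s'+2w'+3u'=s+2w+3u$.

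Next I would line up the domains so that both kernels admit a Poincaré-series representation. On the left, $(s,w,u)\in D$ with $-1/2<\Re(w)<0$, so Lemma~\ref{repreonD} furnishes the representation (\ref{representationD}) of $\phi(w)\xi(2w)\,{\Omega}_{k,s,w,u}(Z)$, whose series converges absolutely by Lemma~\ref{negativerealpart}. On the right I would verify $(s',w',u')\in C$: indeed $\Re(s')=\Re(s)+\Re(w)-1/2>3/2$ by $\Re(s)>5/2$ and $\Re(w)>-1/2$; $\Re(w')=1-\Re(w)>1$ by $\Re(w)<0$; and $\Re(s'+w'+u')=\Re(s+w+u)<k/2-2<k/2-1$. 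Hence ${\Omega}_{k,s',w',u'}(Z)$ is given by the representation (\ref{repreC}) of Proposition~\ref{cuspACWC}, whose series converges absolutely by Lemma~\ref{sum}.

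The core step is then to start from (\ref{representationD}) and insert the Selberg functional equation
\begin{equation*}
\xi(2w)E(T| w,s,-s-w-u+2)=\xi(2-2w)E(T| 1-w,\,s+w-1/2,\,-s-w-u+2)
\end{equation*}
inside the sum, term by term; this is legitimate because the equation holds for each fixed $Y=T$ as an identity of meromorphic functions, and the rearranged series is precisely the absolutely convergent series of the right-hand side. Combining this with the prefactor identity and the arithmetic identities above turns (\ref{representationD}) into $\phi(w)\xi(2-2w)\,{\Omega}_{k,s',w',u'}(Z)$. Since $\phi(w)=w(1-w)(w-\tfrac12)(\tfrac12-w)$ has no zero in the strip $-1/2<\Re(w)<0$, I may cancel the common factor $\phi(w)$ and obtain the claimed functional equation. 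I expect the only place demanding care to be the bookkeeping that makes both prefactors and all three arguments of $E$ match exactly after the substitution; once that is settled, the functional equation of the scalar Eisenstein series carries out the rest.
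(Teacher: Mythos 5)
Your proposal is correct and follows essentially the same route as the paper's own proof: represent $\phi(w)\xi(2w)\Omega_{k,s,w,u}(Z)$ on $D$ via Lemma~\ref{repreonD}, apply Selberg's functional equation term by term, recognize the result as the representation (\ref{repreC}) at $(s+w-1/2,1-w,w+u-1/2)\in C$, and match prefactors through the identity $\Gamma_3(-w,-s,s+w+u)=\Gamma_3(w-1,1/2-s-w,s+w+u)$ from (\ref{Gamma3property}). Your explicit verification of the prefactor invariance, the domain membership in $C$, and the non-vanishing of $\phi(w)$ on the strip $-1/2<\Re(w)<0$ merely spells out details the paper treats tersely.
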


\begin{proof} [Proof]
Lemma~\ref{repreonD} yields that the expression
$\xi(2w){\Omega}_{k,s,w,u}(Z)$ is represented by (\ref{representationD}) without the $\phi(w)$ factor. Then, using Selberg’s Eisenstein series' functional equation from Subsection \ref{defSelberg} one gets
\vspace{-0.1cm}
\begin{equation*}\label{secfuncteq1}
\begin{aligned}
&\; \frac{\Gamma_3(-w,-s,s+w+u)}{2(2\pi)^{s+2w+3u}} \xi(2w){\Omega}_{k,s,w,u}(Z)\\[-0.1em]
=&\; \xi(2w)\hspace{-0.15cm}\sum_{T\in \mathcal{J}/GL_3(\mathbb{Z})}\hspace{-0.1cm}\frac{1}{\varepsilon_T}E(T| w,s,-s-w-u+2)P_{k,T}(Z)\\[-0.1em]
=&\; \xi(2-2w)\hspace{-0.15cm}\sum_{T\in \mathcal{J}/GL_3(\mathbb{Z})}\hspace{-0.1cm}\frac{1}{\varepsilon_T}E(T|1-w,s+w-1/2,-s-w-u+2)P_{k,T}(Z)\\
=&\; \frac{\Gamma_3(w-1,1/2-s-w,s+w+u)}{2(2\pi)^{s+2w+3u}} \xi(2-2w){\Omega}_{k,s+w-1/2,1-w,w+u-1/2}(Z).
\end{aligned}
\vspace{-0.1cm}
\end{equation*}
The last equality results from defining $\xi(2-2w){\Omega}_{k,s+w-1/2,1-w,w+u-1/2}(Z)$ using (\ref{repreC}), as $(s+w-1/2,1-w,w+u-1/2) \in C.$ By the identity $\Gamma_3(-w,-s,s+w+u)=\Gamma_3(w-1,1/2-s-w,s+w+u)$ from (\ref{Gamma3property}), we obtain the lemma.
\end{proof}

We recall that the function $\phi(w)\xi(2w){\Omega}_{k,s,w,u}(Z)$ is holomorphic on $(A \cup C \cup WC \cup D) \times \mathcal{H}_3$ by Proposition~\ref{cuspACWC} and Lemma~\ref{repreonD}. For $k>14$ we define
\vspace{-0.1cm}
\begin{equation*}
\begin{array}{r@{}l}
aA\,&=\{(s,w,u)\in \mathbb{C}^3 / \Re(w)<0, \Re(s+w)>3/2, \Re(w+u)>9/2,\\
&\hspace{0.5cm}\Re(2s-w+u)<k-13/2\},\\
aC\,&=\{(s,w,u)\in \mathbb{C}^3 / \Re(w)<0, \Re(s+w)>3/2, \Re(s+w+u)<k/2-1\}, \text{ and}\\
aWC\,&=\{(s,w,u)\in \mathbb{C}^3 / \Re(w)<0, \Re(s+w)>3/2, \Re(w+u)>(k+3)/2\}.
\end{array}
\vspace{-0.1cm}
\end{equation*}

\begin{lemma}\label{asets}
Let $k>22$. The function $\phi(w)\xi(2w){\Omega}_{k,s,w,u}(Z)$ admits a holomorphic continuation in the variables $(s,w,u,Z)$ to $(aA \cup aC \cup aWC) \times \mathcal{H}_3$. 
\end{lemma}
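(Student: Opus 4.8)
The plan is to propagate across the hyperplane $\Re(w)=0$ the holomorphy already secured on $(A\cup C\cup WC\cup D)\times\mathcal H_3$, using the second functional equation as the continuation device. Recalling that $\phi(w)\xi(2w)$ and $\phi(w)\xi(2-2w)$ are both entire, I would multiply the identity of Lemma~\ref{Secondfunctionalequationa} by $\phi(w)$ to obtain
\begin{equation*}
\phi(w)\xi(2w)\,{\Omega}_{k,s,w,u}(Z)=\phi(w)\xi(2-2w)\,{\Omega}_{k,s+w-1/2,\,1-w,\,w+u-1/2}(Z),
\end{equation*}
valid on $D\times\mathcal H_3$ with $-1/2<\Re(w)<0$. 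The right-hand side is the candidate continuation, so the argument reduces to showing that its natural domain covers $(aA\cup aC\cup aWC)\times\mathcal H_3$ and that it agrees with the left-hand side on a nonempty overlap.

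First I would record the affine substitution $(s,w,u)\mapsto(s',w',u'):=(s+w-1/2,\,1-w,\,w+u-1/2)$ and verify, purely by tracking real parts, that it carries $aA$, $aC$, $aWC$ into $A$, $C$, $WC$ respectively. On every $a$-set one has $\Re(w')=1-\Re(w)>1$ and $\Re(s')=\Re(s+w)-1/2>1$; moreover $\Re(w+u)>9/2$ becomes $\Re(u')>4$, $\Re(2s-w+u)<k-13/2$ becomes $\Re(2s'+4w'+u')<k-4$ (giving $aA\to A$), while $\Re(s+w+u)<k/2-1$ is invariant (giving $aC\to C$) and $\Re(w+u)>(k+3)/2$ becomes $\Re(u')>k/2+1$ (giving $aWC\to WC$). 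Granting this dictionary, Proposition~\ref{cuspACWC} (via the uniformly convergent representation (\ref{repreC})) gives that ${\Omega}_{k,s',w',u'}(Z)$ is jointly holomorphic on $(A\cup C\cup WC)\times\mathcal H_3$; since $\phi(w)\xi(2-2w)$ is entire and $(s',w',u')$ depends holomorphically on $(s,w,u)$, the right-hand side above is holomorphic on all of $(aA\cup aC\cup aWC)\times\mathcal H_3$.

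It then remains to glue. Because every $a$-set forces $\Re(w)<0$ whereas $A,C,WC$ all force $\Re(w)>1$, the only overlap of the two domains lies inside $D$, in the strip $-1/2<\Re(w)<0$; a short inequality check gives $D\cap\{-1/2<\Re(w)<0\}\subseteq aC$, and this set is nonempty exactly because $k>22$ leaves room for $\Re(u)$. On this open, connected overlap the two sides coincide by Lemma~\ref{Secondfunctionalequationa}, so the identity theorem lets me patch the candidate onto the function previously defined on $D\times\mathcal H_3$ (Lemma~\ref{repreonD}) and conclude the stated continuation.

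The step I expect to be most delicate is the inequality bookkeeping of the preceding paragraph: the three or four real-part conditions defining each $a$-set must correspond, under the change of variables, exactly to the defining conditions of $A$, $C$, $WC$, and it is precisely this matching that forces the constants $9/2$, $(k+3)/2$, and $k-13/2$ appearing in $aA$, $aWC$, and $aA$. Everything else—the entireness of $\phi(w)\xi(2w)$ and $\phi(w)\xi(2-2w)$, the holomorphy input from Proposition~\ref{cuspACWC}, and the final identity-theorem gluing across the connected overlap—is routine once that dictionary is in place.
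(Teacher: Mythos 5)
Your proposal is correct and follows essentially the same route as the paper: there, too, the continuation is defined on the $a$-sets by the right-hand side of Lemma~\ref{Secondfunctionalequationa} (multiplied by the entire factor $\phi(1-w)=\phi(w)$), using that $aA$, $aC$, $aWC$ are the images of $A$, $C$, $WC$ under the involution $(s,w,u)\mapsto(s+w-1/2,1-w,w+u-1/2)$, and the gluing is done on exactly your overlap $\{(s,w,u)\in D\,/\,-1/2<\Re(w)<0\}\subseteq aC\cap D$, where that functional equation holds. Your explicit real-part bookkeeping and the identity-theorem remark merely spell out what the paper asserts without computation.
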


\begin{proof} [Proof]
The set $aA$ (resp. $aC$ and $aWC$) is the image of the set $A$ (resp. $C$ and $WC$) under the involution $(s,w,u) \mapsto (s+w-1/2,1-w,w+u-1/2).$ Hence, we can define $\phi(w)\xi(2w){\Omega}_{k,s,w,u}(Z)$ on $(aA \cup aC \cup aWC) \times \mathcal{H}_3$ as a holomorphic function via
\vspace{-0.03cm}
\begin{equation}\label{ads}
\phi(w)\xi(2w){\Omega}_{k,s,w,u}(Z) = \phi(1-w)\xi(2-2w) {\Omega}_{k,s+w-1/2,1-w,w+u-1/2}(Z),
\vspace{-0.03cm}
\end{equation}
using that $\phi(1-w)\xi(2-2w)$ is entire and Proposition~\ref{cuspACWC}. On the other hand, the function on the left-hand side of (\ref{ads}) is holomorphic on $(A \cup C \cup WC \cup D) \times  \mathcal{H}_3$ as we pointed out after Lemma~\ref{Secondfunctionalequationa}. Additionally $aA \cup aC \cup aWC$ has a non-empty intersection with $A \cup C \cup WC \cup D$, in particular
\vspace{-0.05cm}
\begin{equation}\label{mnb}
 \{(s,w,u)\in D / -1/2<\Re(w)<0\} \subseteq aC \cap D.
\vspace{-0.05cm}
\end{equation}
Since the functional equation of Lemma~\ref{Secondfunctionalequationa} holds for the set on the left-hand side of (\ref{mnb}), one gets that $\phi(w)\xi(2w){\Omega}_{k,s,w,u}(Z)$ is well-defined and admits the claimed holomorphic continuation.
\end{proof}

In order to define ${\Omega}_{k,s,w,u}(Z)$ for $\Re(s)<0$, we introduce some sets for $k>14$
\vspace{-0.1cm}
\begin{equation*}
\begin{array}{r@{}l}
WaA\,&= \{(s,w,u)\in \mathbb{C}^3 / \Re(s)<0, \Re(s+w)>3/2, \Re(w+u)<k-9/2,\\
&\hspace{0.5cm}\Re(2s-w+u)>13/2\},\\
WaC\,&= \{(s,w,u)\in \mathbb{C}^3 / \Re(s)<0, \Re(s+w)>3/2, \Re(u)>k/2+1\},\\
WaWC\,&= \{(s,w,u)\in \mathbb{C}^3 / \Re(s)<0, \Re(s+w)>3/2, \Re(w+u)<(k-3)/2\}, \text{ and}\\
WD\,&= \{(s,w,u)\in \mathbb{C}^3 / \Re(s)>-1/2, \Re(w)>5/2, \Re(u)>k/2+2\}.\\
\end{array}
\vspace{-0.1cm}
\end{equation*}

\begin{lemma}\label{Wsets}
Let $k>22$. The function $\phi(s)\xi(2s)\phi(w)\xi(2w){\Omega}_{k,s,w,u}(Z)$ admits a holomorphic continuation in the variables $(s,w,u,Z)$ to $(WaA \cup WaC \cup WaWC \cup WD) \times \mathcal{H}_3.$
\end{lemma}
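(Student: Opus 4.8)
The plan is to mirror the proof of Lemma~\ref{asets}, replacing the second functional equation (Lemma~\ref{Secondfunctionalequationa}) by the first one (Lemma~\ref{wequation}). The relevant involution is now $W\colon(s,w,u)\mapsto(w,s,-s-w-u+k)$, which one checks satisfies $W\circ W=\mathrm{id}$. A direct inspection of the defining inequalities shows that $W$ carries $WaA, WaC, WaWC, WD$ bijectively onto $aA, aC, aWC, D$ respectively; for instance, under the substitution $s=w'$, $w=s'$, $u=-s'-w'-u'+k$ the four conditions cutting out $aA$ turn into exactly those cutting out $WaA$, and the same computation handles the remaining three sets.

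First I would \emph{define} the function on the new region through the first functional equation. Since Lemma~\ref{wequation} gives ${\Omega}_{k,s,w,u}(Z)=e^{-(s+2w+3u)\pi i}{\Omega}_{k,w,s,-s-w-u+k}(Z)$, multiplying by $\phi(s)\xi(2s)\phi(w)\xi(2w)$ yields
\begin{equation*}
\phi(s)\xi(2s)\phi(w)\xi(2w){\Omega}_{k,s,w,u}(Z)=e^{-(s+2w+3u)\pi i}\,\phi(w)\xi(2w)\,\Bigl[\phi(s)\xi(2s){\Omega}_{k,w,s,-s-w-u+k}(Z)\Bigr].
\end{equation*}
Writing $(s',w',u')=(w,s,-s-w-u+k)$ one has $\phi(s)\xi(2s)=\phi(w')\xi(2w')$, so the bracket equals $\phi(w')\xi(2w'){\Omega}_{k,s',w',u'}(Z)$ evaluated at the $W$-image; by Lemma~\ref{asets} and Lemma~\ref{repreonD} this is holomorphic on $(aA\cup aC\cup aWC\cup D)\times\mathcal{H}_3$. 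As the prefactor $e^{-(s+2w+3u)\pi i}\phi(w)\xi(2w)$ is entire in $(s,w,u)$, and $W$ sends $WaA\cup WaC\cup WaWC\cup WD$ onto $aA\cup aC\cup aWC\cup D$, the right-hand side is holomorphic on $(WaA\cup WaC\cup WaWC\cup WD)\times\mathcal{H}_3$.

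It then remains to glue this to the previously constructed function along an overlap so that the two define a single holomorphic object. Since $\phi(s)\xi(2s)$ is entire, $\phi(s)\xi(2s)\phi(w)\xi(2w){\Omega}_{k,s,w,u}(Z)$ is already holomorphic on $(A\cup C\cup WC\cup D\cup aA\cup aC\cup aWC)\times\mathcal{H}_3$, where $\Re(s)>1$ throughout; hence the three sets $WaA,WaC,WaWC$ (all forcing $\Re(s)<0$) are disjoint from the old domain, and it suffices to check agreement on $WC\cap WD=\{\Re(s)>1,\Re(w)>5/2,\Re(u)>k/2+2\}$, which is a nonempty open set. On $WC$ the function ${\Omega}_{k,s,w,u}(Z)$ was defined in Proposition~\ref{cuspACWC} \emph{precisely} by ${\Omega}_{k,s,w,u}(Z)=e^{-(s+2w+3u)\pi i}{\Omega}_{k,w,s,-s-w-u+k}(Z)$, so the first functional equation holds identically there; therefore the two expressions coincide on $WC\cap WD$, and by the identity theorem they patch to a holomorphic continuation on $(WaA\cup WaC\cup WaWC\cup WD)\times\mathcal{H}_3$.

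The main obstacle is purely the bookkeeping in the first step, namely verifying that the four regions are exactly the $W$-images of $aA, aC, aWC, D$, together with confirming that $WC\cap WD$ is nonempty so the identity theorem applies on a genuine open overlap; once these inequality checks are in place, the argument is a direct transcription of Lemma~\ref{asets} with $W$ in place of the involution $(s,w,u)\mapsto(s+w-1/2,1-w,w+u-1/2)$.
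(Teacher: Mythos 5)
Your proposal is correct and follows essentially the same route as the paper: define the function on the new region via the first functional equation (the involution $(s,w,u)\mapsto(w,s,-s-w-u+k)$ maps $WaA, WaC, WaWC, WD$ onto $aA, aC, aWC, D$), invoke Lemma~\ref{asets} and Lemma~\ref{repreonD} for holomorphy of the right-hand side, and glue along $WC\cap WD$ using that $\Omega_{k,s,w,u}(Z)$ on $WC$ was itself defined by the same functional equation in (\ref{def123}). Your version actually spells out the overlap bookkeeping (e.g.\ that $WaA,WaC,WaWC$ are disjoint from the old domain) more explicitly than the paper does.
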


\begin{proof} [Proof]
The function $\phi(s)\xi(2s)\phi(w)\xi(2w){\Omega}_{k,s,w,u}(Z)$ is holomorphic on $(A \cup C \cup WC \cup D \cup aA \cup aC \cup aWC) \times \mathcal{H}_3$ by Lemma~\ref{asets}, the remark prior to Lemma~\ref{asets} and the holomorphic continuation of $\phi(s)\xi(2s)$ to $\mathbb{C}$. It is easy to see that $WaA$ (resp. $WaC, WaWC$ and $WD$) is the image of $aA$ (resp. $aC, aWC$ and $D$) under the involution $(s,w,u) \mapsto (w,s,-s-w-u+k).$ Hence, by Lemma~\ref{repreonD} and Lemma~\ref{asets}, we can define a holomorphic function on $(WaA \cup WaC \cup WaWC \cup WD) \times \mathcal{H}_3$ via
\vspace{-0.1cm}
\begin{equation}\label{estrella}
\begin{aligned}
\phi(s)\xi(2s)\phi(w)\xi(2w)&{\Omega}_{k,s,w,u}(Z)\\
&=\phi(s)\xi(2s)\phi(w)\xi(2w)e^{-(s+2w+3u)\pi i}{\Omega}_{k,w,s,-s-w-u+k}(Z).
\end{aligned}
\vspace{-0.05cm}
\end{equation}
The new set $WaA \cup WaC \cup WaWC \cup WD$ has a non-empty intersection with the old one $A \cup C \cup WC \cup D \cup aA \cup aC \cup aWC$, for example $WD \cap WC \neq \varnothing$. Since the function $\phi(s)\xi(2s)\phi(w)\xi(2w){\Omega}_{k,s,w,u}(Z)$ is well-defined in this last set (see (\ref{estrella}) and (\ref{def123})), the lemma is obtained.
\end{proof}

We define
\vspace{-0.13cm}
\begin{equation*}
aWD= \{(s,w,u)\in \mathbb{C}^3 / \Re(w)<-3/2, \Re(s+w)>0, \Re(w+u)>(k+5)/2\}.
\vspace{-0.1cm}
\end{equation*}

\begin{lemma}\label{aWD}
Let $k>22$. The function
\begin{equation}\label{defaWD}
\phi(s)\xi(2s)\phi(w)\xi(2w)\phi(s+w-1/2)\xi(2s+2w-1){\Omega}_{k,s,w,u}(Z)
\end{equation}
admits a holomorphic continuation in the variables $(s,w,u,Z)$ to $aWD \times \mathcal{H}_3$.
\end{lemma}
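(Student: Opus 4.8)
The plan is to obtain the continuation exactly as in Lemma~\ref{asets} and Lemma~\ref{Wsets}: I transport holomorphy from a region where it is already known through the second functional equation. Write $\sigma$ for the involution $(s,w,u)\mapsto(s+w-1/2,\,1-w,\,w+u-1/2)$ of Lemma~\ref{Secondfunctionalequationa}. The first step is the elementary observation that $aWD=\sigma(WD)$: setting $(S,W,U)=\sigma(s,w,u)$ and substituting into the inequalities $\Re(w)<-3/2$, $\Re(s+w)>0$, $\Re(w+u)>(k+5)/2$ defining $aWD$, these turn into $\Re(W)>5/2$, $\Re(S)>-1/2$, $\Re(U)>k/2+2$, which are precisely the inequalities defining $WD$.

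Next I would verify that the prefactor of (\ref{defaWD}) is merely permuted by $\sigma$. With $(S,W,U)=\sigma(s,w,u)$ one has $S=s+w-1/2$, $W=1-w$ and $S+W-1/2=s$, so that, using the symmetry $\phi(x)=\phi(1-x)$, one gets $\phi(S)\xi(2S)=\phi(s+w-1/2)\xi(2s+2w-1)$, $\phi(W)\xi(2W)=\phi(w)\xi(2-2w)$ and $\phi(S+W-1/2)\xi(2S+2W-1)=\phi(s)\xi(2s)$. Hence the prefactor of (\ref{defaWD}) evaluated at $\sigma(s,w,u)$ equals the prefactor at $(s,w,u)$ with $\xi(2w)$ replaced by $\xi(2-2w)$; combined with the identity $\xi(2w)\Omega_{k,s,w,u}(Z)=\xi(2-2w)\Omega_{k,\sigma(s,w,u)}(Z)$ of Lemma~\ref{Secondfunctionalequationa}, this shows that (\ref{defaWD}) takes the same value at $(s,w,u)$ and at $\sigma(s,w,u)$.

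With these two facts the continuation is immediate. Since $\phi(x)\xi(2x)$ is entire, so is the factor $\phi(s+w-1/2)\xi(2s+2w-1)$, and therefore Lemma~\ref{Wsets} shows that (\ref{defaWD}) is holomorphic on $WD\times\mathcal{H}_3$. As $\sigma$ acts only on $(s,w,u)$ and $\sigma(aWD)=WD$, the function obtained by evaluating (\ref{defaWD}) at $(\sigma(s,w,u),Z)$ is holomorphic on $aWD\times\mathcal{H}_3$, and by the invariance established in the previous paragraph it continues (\ref{defaWD}).

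For consistency on the overlap I would argue as in Lemma~\ref{Wsets}. The set $aWD$ meets $aWC$, for instance on the non-empty set $\{\Re(w)<-3/2,\ \Re(s+w)>3/2,\ \Re(w+u)>(k+5)/2\}$, and on $aWC$ the function (\ref{defaWD}) was already defined through this very same $\sigma$-relation, with $\sigma(s,w,u)$ landing in $WC$ (see (\ref{ads})). On $aWD\cap aWC$ one has $\sigma(s,w,u)\in WD\cap WC$, where (\ref{defaWD}) is single-valued (holomorphic on $WD$ by Lemma~\ref{Wsets} and on $WC$ by Proposition~\ref{cuspACWC}); hence the old definition (via $aWC$) and the new one (via $aWD$) evaluate (\ref{defaWD}) at the same point and agree, so the continuation is well defined. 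The main obstacle is purely organizational: recognizing that the relevant involution is $\sigma$, checking $aWD=\sigma(WD)$ together with the permutation of the three Euler factors, and handing off to $aWC$ rather than invoking Lemma~\ref{Secondfunctionalequationa} directly, so that no instance of the second functional equation is required in the range $\Re(w)<-3/2$, where it was never proved.
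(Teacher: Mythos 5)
Your proposal is correct and is essentially the paper's own proof: you define the continuation on $aWD\times\mathcal{H}_3$ by evaluating (\ref{defaWD}) at $(s+w-1/2,\,1-w,\,w+u-1/2)\in WD$ (exactly the paper's expression (\ref{L12})), obtain holomorphy from Lemma~\ref{Wsets} together with the entire factor $\phi(s)\xi(2s)$, and verify consistency on $aWD\cap aWC$ by noting that both definitions reduce to evaluating the same single-valued function at a point of $WC\cap WD$, which is precisely the paper's argument. Your concluding remark also matches the paper: neither proof invokes Lemma~\ref{Secondfunctionalequationa} on $aWD$ itself, so the invariance asserted in your second paragraph (justified only on $D$ with $-1/2<\Re(w)<0$) is never actually needed and does no harm.
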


\begin{proof} [Proof]
We use Lemma~\ref{Wsets} to prove the facts in this proof. The function in (\ref{defaWD}) is holomorphic on the set $(A \cup C \cup WC \cup D \cup aA \cup aC \cup aWC \cup WaA \cup WaC \cup WaWC \cup WD) \times \mathcal{H}_3$. Since the set $aWD$ is the image of $WD$ under the involution $(s,w,u) \mapsto (s+w-1/2,1-w,w+u-1/2),$ we define the expression in (\ref{defaWD}) on $aWD \times \mathcal{H}_3$ as
\vspace{-0.05cm}
\begin{equation}\label{L12}
\phi(s)\xi(2s)\phi(1\hspace{-0.03cm}-\hspace{-0.03cm}w)\xi(2\hspace{-0.03cm}-\hspace{-0.03cm}2w)\phi(s+w-1/2)\xi(2s+2w-1){\Omega}_{k,s+w-1/2,1-w,w+u-1/2}(Z).
\vspace{-0.05cm}
\end{equation}
The function in (\ref{defaWD}) is holomorphic on $aWD \times \mathcal{H}_3$. Notice that $aWC \cap aWD\neq \varnothing$, then (\ref{defaWD}) is well-defined at this intersection since $(s,w,u) \in aWC \cap aWD$ iff $(s+w-1/2,1-w,w+u-1/2) \in WC \cap WD$ and the function $\phi(s')\xi(2s')\phi(w')\xi(2w')$ ${\Omega}_{k,s',w',u'}(Z)$ is well-defined for all $(s',w',u')$ on $WC \cup WD$.
\end{proof}
\begin{lemma}\label{thirdeq}
Let $k>22$. The series ${\Omega}_{k,s,w,u}(Z)$ satisfies the functional equation 
\vspace{-0.1cm}
\begin{equation*}
\begin{aligned}
\xi(2s)\xi(2w)\xi(2s+2w&-1){\Omega}_{k,s,w,u}(Z)\\
&=\xi(2-2s)\xi(2-2w)\xi(-2s-2w+3){\Omega}_{k,1-w,1-s,s+w+u-1}(Z)
\end{aligned}
\vspace{-0.1cm}
\end{equation*}
for all $(s,w,u,Z) \in aWD\times \mathcal{H}_3$ such that $3/2<\Re(s)<2.$
\end{lemma}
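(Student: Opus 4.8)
The plan is to obtain this third functional equation by composing the first two functional equations already established (Lemma~\ref{wequation} and Lemma~\ref{Secondfunctionalequationa}), together with their continuations from Lemmas~\ref{asets}, \ref{Wsets} and \ref{aWD}. Denote by $g_1\colon (s,w,u)\mapsto (w,s,-s-w-u+k)$ the involution attached to the first functional equation and by $g_2\colon (s,w,u)\mapsto (s+w-1/2,1-w,w+u-1/2)$ the one attached to the second. A direct check shows that the target substitution $(s,w,u)\mapsto (1-w,1-s,s+w+u-1)$ equals the composite $g_2 g_1 g_2 g_1 g_2$ (applied rightmost first); since these two involutions generate a dihedral group (Proposition~\ref{group}), this identifies the third functional equation as the longest element of that group. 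The proof therefore consists in applying the two known functional equations five times along this word and collecting the resulting factors.

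First I would record the five intermediate points and verify that each lies in a region where the relevant functional equation has already been established. Writing $P_0=(s,w,u)$, the hypotheses $(s,w,u)\in aWD$ and $3/2<\Re(s)<2$ force $\Re(w)<-3/2$ and $\Re(s+w)>0$, hence $0<\Re(s+w)<1/2$. Consequently the midpoint $P_2:=g_1g_2(s,w,u)=(1-w,s+w-1/2,-s-w-u+k)$ satisfies $P_2\in D$ and $-1/2<\Re(s+w-1/2)<0$, which is exactly the hypothesis of Lemma~\ref{Secondfunctionalequationa}; this is the role of the technical restriction $3/2<\Re(s)<2$. One checks likewise that $g_2(P_0)\in WD$, that $Q_2:=g_2(P_2)\in C$, that $g_1(Q_2)\in WC$, and that $g_2 g_1(Q_2)=(1-w,1-s,s+w+u-1)\in aWC$. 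Thus every step of the word $g_2 g_1 g_2 g_1 g_2$ is an application of the second functional equation (via Lemma~\ref{aWD}, Lemma~\ref{Secondfunctionalequationa}, and Lemma~\ref{asets}) or of the first functional equation (via Lemma~\ref{Wsets} and Proposition~\ref{cuspACWC}) at a point where it is valid.

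Then I would chain the five identities. The two applications of the first functional equation contribute phase factors $e^{-(s+2w+3u)\pi i}$ at $g_2(P_0)$ and at $g_1(Q_2)$; a short computation shows both exponents equal $s+2w+3u$, and in the chain one appears as $e^{-(s+2w+3u)\pi i}$ and the other as $e^{+(s+2w+3u)\pi i}$, so these phases cancel and no exponential survives, consistent with the stated equation. The three applications of the second functional equation contribute, on the left, the factors $\xi(2w)$ (at $P_0$, where the reflected variable is $w$), $\xi(2s+2w-1)$ (at $P_2$, where the reflected variable is $s+w-1/2$) and $\xi(2s)$ (at $g_1(Q_2)$, where the reflected variable is $s$), and on the right the factors $\xi(2-2w)$, $\xi(-2s-2w+3)$ and $\xi(2-2s)$; multiplying the three relations together yields precisely $\xi(2s)\xi(2w)\xi(2s+2w-1){\Omega}_{k,s,w,u}(Z)=\xi(2-2s)\xi(2-2w)\xi(-2s-2w+3){\Omega}_{k,1-w,1-s,s+w+u-1}(Z)$. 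In carrying this out one uses the symmetries $\phi(1-w)=\phi(w)$ and $\phi(1-s)=\phi(s)$ to reconcile the $\phi$-weighted identities of Lemmas~\ref{asets}--\ref{aWD}, after which the common $\phi$ factors divide out.

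The main obstacle is the bookkeeping of the second paragraph: one must confirm that each of the five intermediate points indeed lands in the declared domain and that the accumulated $\xi$, $\phi$ and exponential factors combine exactly as claimed; the delicate point is the central step, where the constraint $3/2<\Re(s)<2$ is precisely what places $P_2$ in the hypothesis region of Lemma~\ref{Secondfunctionalequationa}. A minor technical nuisance is that dividing out the $\phi$ and $\xi$ factors is only legitimate where they are nonzero; this is handled by first proving the identity on the open subset of $aWD\cap\{3/2<\Re(s)<2\}$ on which all these factors are nonvanishing and then invoking the identity theorem, both sides being holomorphic there by Lemmas~\ref{asets} and \ref{aWD}.
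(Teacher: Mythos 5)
Your proposal is correct and takes essentially the same route as the paper: the paper's proof also realizes the substitution $(s,w,u)\mapsto(1-w,1-s,s+w+u-1)$ as the five-fold word in the two involutions, passing through exactly your intermediate points (namely $g_2(P_0)\in WD$ via Lemma~\ref{aWD}, $P_2\in D$ via Lemma~\ref{Wsets}, $Q_2\in C$ via Lemma~\ref{Secondfunctionalequationa} with the same use of $0<\Re(s+w)<1/2$, $g_1(Q_2)\in WC$ and the target in $aWC$ via Lemma~\ref{asets}), with the same cancellation of the two phases $e^{\pm(s+2w+3u)\pi i}$ and the same removal of the $\phi$ factors justified by their nonvanishing on the hypothesis region. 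The only difference is organizational: the paper meets in the middle at your point $Q_2$ (its auxiliary expression involving ${\Omega}_{k,s,-s-w+3/2,-u+k-1}$), showing both sides equal to it, rather than chaining the five steps linearly.
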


\begin{proof} [Proof]
Let us consider the following three expressions
\vspace{-0.1cm}
\begin{equation}\label{L} 
\phi(s)\xi(2s)\phi(w)\xi(2w)\phi(s+w-1/2)\xi(2s+2w-1){\Omega}_{k,s,w,u}(Z),
\end{equation}
\begin{equation}\label{I}
\begin{aligned}
\phi(s)\xi(2s)\phi(1-w)\xi(2-2w)\phi(-s-w+3/2)&\xi(-2s-2w+3)e^{-(s+2w+3u)\pi i}\\[-0.2em]
&\times {\Omega}_{k,s,-s-w+3/2,-u+k-1}(Z), \text{ and}
\end{aligned}
\end{equation}
\begin{equation}\label{R}
\begin{aligned}
\phi(1-s)\xi(2-2s)\phi(1-w)\xi(2-2w)\phi(-s-w+3/2&)\xi(-2s-2w+3)\\[-0.2em]
&\times {\Omega}_{k,1-w,1-s,s+w+u-1}(Z).
\end{aligned}
\end{equation}
To prove this lemma we show the equality of the holomorphic functions (\ref{L}) and (\ref{R}) with (\ref{I}); the $\phi$ factors can be canceled on both sides because the possible poles do not belong to the set in the hypothesis.

Lemma~\ref{aWD} establishes that the function in (\ref{L}) is defined by (\ref{L12}). Since $(s,w,u) \in aWD$ implies $(s+w-1/2,1-w,w+u-1/2) \in WD$, (\ref{L12}) is defined by
\vspace{-0.1cm}
\begin{equation*}
\begin{aligned}
\phi(s)\xi(2s)\phi(1-w)\xi(2-2w)\phi(s+w-1/2)\xi(2s+&2w-1)e^{-(s+2w+3u)\pi i}\\[-0.2em]
&\times{\Omega}_{k,1-w,s+w-1/2,-s-w-u+k}(Z)
\end{aligned}
\vspace{-0.1cm}
\end{equation*}
by Lemma~\ref{Wsets}. Note $(1-w,s+w-1/2,-s-w-u+k)\in D.$ Considering the hypothesis over $\Re(s)$ one concludes $0<\Re(s+w)<1/2$; then, we can apply Lemma~\ref{Secondfunctionalequationa} and obtain that (\ref{L}) is equal to (\ref{I}).

On the other hand, by the hypothesis it is easy to check $(1-w,1-s,s+w+u-1) \in aWC$. Hence, the function in (\ref{R}) is defined by
\vspace{-0.1cm}
\begin{equation*}
\phi(s)\xi(2s)\phi(1-w)\xi(2-2w)\phi(-s-w+3/2)\xi(-2s-2w+3) {\Omega}_{k,-s-w+3/2,s,w+u-1/2}(Z)
\vspace{-0.1cm}
\end{equation*}
according to Lemma~\ref{asets}. Note $(-s-w+3/2,s,w+u-1/2) \in WC$. By (\ref{def123}), this function is defined by (\ref{I}), proving that (\ref{R}) is equal to (\ref{I}) as desired.
\end{proof}

Before we prove Theorem~\ref{theoremanalyticcontinuation}, we introduce the alpha sets
\vspace{-0.1cm}
\begin{equation*}\label{defalpha}
\begin{array}{r@{}l}
\alpha A=&\, \{(s,w,u) \in \mathbb{C}^3 / \Re(s)<0, \Re(w)<0, 5<\Re(s+w+u)<k-3\}\\
\alpha C=&\, \{(s,w,u) \in \mathbb{C}^3 / \Re(s)<0, \Re(w)<0, \Re(u)<k/2-2\}\\
\alpha WC=&\, \{(s,w,u) \in \mathbb{C}^3 / \Re(s)<0, \Re(w)<0, \Re(s+w+u)>k/2+2\}\\
\alpha D=&\, \{(s,w,u) \in \mathbb{C}^3 / \Re(s)<3/2, \Re(w)<-3/2, \Re(u)<k/2-3\}\\
\alpha aA=&\, \{(s,w,u) \in \mathbb{C}^3 / \Re(s)>1, \Re(s+w)<1/2, 9/2<\Re(w+u)<k-7/2\}\\
\alpha aC=&\, \{(s,w,u) \in \mathbb{C}^3 / \Re(s)>1, \Re(s+w)<1/2, \Re(u)<k/2-2\}\\
\alpha aWC=&\, \{(s,w,u) \in \mathbb{C}^3 / \Re(s)>1, \Re(s+w)<1/2, \Re(w+u)>(k+3)/2\}\\
\alpha WaA=&\, \{(s,w,u) \in \mathbb{C}^3 / \Re(w)>1, \Re(s+w)<1/2, 7/2<\Re(w+u)<k-9/2\}\\
\alpha WaC=&\, \{(s,w,u) \in \mathbb{C}^3 / \Re(w)>1, \Re(s+w)<1/2, \Re(s+w+u)>k/2+2\}\\
\alpha WaWC=&\, \{(s,w,u) \in \mathbb{C}^3 / \Re(w)>1, \Re(s+w)<1/2, \Re(w+u)<(k-3)/2\}\\
\alpha WD=&\, \{(s,w,u) \in \mathbb{C}^3 / \Re(s)<-3/2, \ \Re(w)<3/2, \ \Re(s+w+u)>k/2+3\}.
\end{array}
\end{equation*}

\begin{proof} [Proof\nopunct] {\it of Theorem~\ref{theoremanalyticcontinuation}.}
Firstly, we show that the function in (\ref{finalfunction}) has an analytic continuation to the union of the alpha sets times $\mathcal{H}_3$. Let us call $A\cup C \cup WC \cup D \cup aA \cup aC \cup aWC \cup WaA \cup WaC \cup WaWC \cup WD \cup aWD \subset \mathbb{C}^3$ the old set. The function in (\ref{finalfunction}) is holomorphic on the old set times $\mathcal{H}_3$ by Lemma~\ref{aWD}. Notice that each alpha set is the image of the corresponding set (that without the alpha letter in front of its name) under the involution $(s,w,u) \mapsto (1-w,1-s,s+w+u-1)$; then Lemma~\ref{Wsets} allows us to define a holomorphic function for (\ref{finalfunction}) on the union of the alpha sets times $\mathcal{H}_3$ via
\vspace{-0.05cm}
\begin{equation}\label{defanalcont}
\phi(1\hspace{-0.03cm}-\hspace{-0.03cm}s)\xi(2\hspace{-0.03cm}-\hspace{-0.03cm}2s)\phi(1\hspace{-0.03cm}-\hspace{-0.03cm}w)\xi(2\hspace{-0.03cm}-\hspace{-0.03cm}2w)\phi(-\hspace{-0.03cm}s-\hspace{-0.03cm}w\hspace{-0.03cm}+\hspace{-0.03cm}3/2)\xi(\hspace{-0.03cm}-2s\hspace{-0.03cm}-\hspace{-0.03cm}2w\hspace{-0.03cm}+\hspace{-0.03cm}3){\Omega}_{k,1\hspace{-0.03cm}-\hspace{-0.03cm}w,1\hspace{-0.03cm}-\hspace{-0.03cm}s,s\hspace{-0.03cm}+\hspace{-0.03cm}w\hspace{-0.03cm}+\hspace{-0.03cm}u\hspace{-0.03cm}-\hspace{-0.03cm}1}(Z)
\vspace{-0.05cm}
\end{equation}
(recall $\phi(s)\xi(2s)$ is entire). Furthermore, the old set and the union of the alpha sets have a non-empty intersection, in particular
\vspace{-0.1cm}
\begin{equation}\label{intersection56}
aWD \cap \{(s,w,u) \in \mathbb{C}^3/ \ 3/2<\Re(s)<2\} \subseteq aWD \cap \alpha aWC.
\vspace{-0.1cm}
\end{equation}
By Lemma~\ref{thirdeq}, the functions in (\ref{finalfunction}) and (\ref{defanalcont}) are equal on the region on the left-hand side of (\ref{intersection56}) (recall the identity $\phi(s)=\phi(1-s)$). Hence, the analytic continuation of (\ref{finalfunction}) to the union of the alpha sets times $\mathcal{H}_3$ via (\ref{defanalcont}) is well-defined.

An open set $X \subseteq \mathbb{C}^3$ is called a \emph{tube} if there exists an open set $R \subseteq \mathbb{R}^3$ such that $X$ can be written as $\{z \in \mathbb{C}^3/ \ \Re(z) \in R\}.$ There is a classical result in the theory of several complex variables which establishes that a holomorphic function on a connected tube has a holomorphic continuation to its convex hull (see for instance \cite[p.~41]{Hor}). It is not hard to check that the union of the old set with the alpha sets is a connected tube in $\mathbb{C}^3$. Thus, the result mentioned yields that the function in (\ref{finalfunction}) admits a holomorphic continuation to the convex hull of such a tube, which is $\mathbb{C}^3$. This ends the proof of the first claim of the theorem.

Now we prove that the analytic continuation of (\ref{finalfunction}) is invariant under $Sp_3(\mathbb{Z})$. For an arbitrary $M \in Sp_3(\mathbb{Z})$, we consider the function initially defined on $A \times \mathcal{H}_3$
\vspace{-0.05cm}
\begin{equation*}
\phi(s)\xi(2s)\phi(w)\xi(2w)\phi(s+w-1/2)\xi(2s+2w-1) \left( {\Omega}_{k,s,w,u}(Z)|_k[M]-{\Omega}_{k,s,w,u}(Z)\right).
\vspace{-0.05cm}
\end{equation*}
It has a holomorphic continuation to $\mathbb{C}^3 \times \mathcal{H}_3$ by the first part of this proof; and it vanishes on $C \times \mathcal{H}_3$ by Proposition~\ref{cuspACWC}. Then, such a holomorphic continuation must be zero on $\mathbb{C}^3 \times \mathcal{H}_3$ for any $M \in Sp_3(\mathbb{Z})$; hence, the holomorphic continuation of (\ref{finalfunction}) is invariant under the action of $Sp_3(\mathbb{Z})$.

Since the analytic continuation of (\ref{finalfunction}) is a Siegel modular form of weight $k$ over $Sp_3(\mathbb{Z})$, it has a Fourier series representation
\vspace{-0.1cm}
\begin{equation*}
\sum_{T \geq 0} A_Te^{2\pi i \tr(TZ)}
\vspace{-0.15cm}
\end{equation*}
where $T$ runs over the set of half-integral, semi-positive-definite, 3 by 3 matrices (see for instance \cite[p.~46]{Kli}). For each $T\geq 0$, we consider the holomorphic function $\Phi_T:\mathbb{C}^3 \to \mathbb{C}$ defined by $\Phi_T(s,w,u)=A_T(s,w,u)$. Recalling (\ref{fourier}) and using Proposition~\ref{cuspACWC}, we know that $\Phi_T$ vanishes on $C$ for any non-positive-definite $T$, so $\Phi_T$ is the zero function in all these cases. In other words, for any fixed $(s,w,u) \in \mathbb{C}^3$ the Fourier coefficients of the analytic continuation of (\ref{finalfunction}) indexed by the non-positive-definite matrices $T$ are zero. These facts yield the second claim of this theorem.

Finally, the functional equations follow from the analytic continuation just proved and Lemmas \ref{wequation}, \ref{Secondfunctionalequationa} and \ref{thirdeq}.
\end{proof}

We prove a new functional equation for our kernel.

\vspace{0.2cm}
\begin{lemma}\label{fourtheq}
Let $k>22$. The series ${\Omega}_{k,s,w,u}(Z)$ satisfies
\vspace{-0.05cm}
\begin{equation}\label{bequation}
\xi(2s){\Omega}_{k,s,w,u}(Z) = \xi(2-2s) {\Omega}_{k,1-s,s+w-1/2,u}(Z) 
\vspace{-0.05cm}
\end{equation}
for all $(s,w,u,Z) \in WD\times \mathcal{H}_3$ such that $-1/2<\Re(s)<0.$
\end{lemma}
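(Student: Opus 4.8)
The plan is to obtain (\ref{bequation}) by composing the two functional equations already established, the first one from Lemma~\ref{wequation} and the second one from Lemma~\ref{Secondfunctionalequationa}, rather than by reworking the Eisenstein-series computation from scratch. Viewing the parameter substitutions as involutions of $\mathbb{C}^3$, the first equation is governed by $(s,w,u)\mapsto(w,s,-s-w-u+k)$ and the second by $(s,w,u)\mapsto(s+w-1/2,1-w,w+u-1/2)$. A short check shows that the substitution $(s,w,u)\mapsto(1-s,s+w-1/2,u)$ occurring in (\ref{bequation}) is exactly the conjugate of the second involution by the first; accordingly I expect (\ref{bequation}) to emerge from applying the first equation, then Lemma~\ref{Secondfunctionalequationa}, then the first equation once more.

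Carrying this out for $(s,w,u,Z)\in WD\times\mathcal{H}_3$ with $-1/2<\Re(s)<0$, I would first use (\ref{estrella})---legitimate because $\phi(s)\xi(2s)\phi(w)\xi(2w)$ is finite and nonzero on this strip and may therefore be cancelled---to write $\Omega_{k,s,w,u}(Z)=e^{-(s+2w+3u)\pi i}\Omega_{k,w,s,-s-w-u+k}(Z)$. One then checks that $(w,s,-s-w-u+k)\in D$ and that its middle entry $s$ lies in $(-1/2,0)$, which is precisely the hypothesis of Lemma~\ref{Secondfunctionalequationa}; applying that lemma with $s'=w$, $w'=s$, $u'=-s-w-u+k$ gives $\xi(2s)\Omega_{k,w,s,-s-w-u+k}(Z)=\xi(2-2s)\Omega_{k,s+w-1/2,1-s,k-w-u-1/2}(Z)$. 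Finally I would apply the first functional equation again at $(s+w-1/2,1-s,k-w-u-1/2)$, verifying that this point lies in $C$ with image $(1-s,s+w-1/2,u)\in WC$, so that (\ref{def123}) yields $\Omega_{k,s+w-1/2,1-s,k-w-u-1/2}(Z)=e^{(s+2w+3u)\pi i}\Omega_{k,1-s,s+w-1/2,u}(Z)$. Multiplying the first identity by $\xi(2s)$ and substituting the other two, the two opposite phases cancel and (\ref{bequation}) follows.

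The step I expect to require the most care is not any single computation but the bookkeeping that keeps all three invocations valid: one must confirm the membership claims $(w,s,-s-w-u+k)\in D$, $(s+w-1/2,1-s,k-w-u-1/2)\in C$ and $(1-s,s+w-1/2,u)\in WC$, each of which reduces to the defining inequalities of $WD$ together with $-1/2<\Re(s)<0$. A small but essential point is that the final phase cancellation works only because $k$ is even: the exponent produced in the last step is $(s+w-1/2)+2(1-s)+3(k-w-u-1/2)$, which equals $-s-2w-3u+3k$, and the spurious $e^{3k\pi i}$ equals $1$ precisely by the parity of $k$, so that the phase matches the $e^{-(s+2w+3u)\pi i}$ carried over from the first step.
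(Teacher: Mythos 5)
Your proof is correct, and the domain bookkeeping it hinges on all checks out: from the defining inequalities of $WD$ together with $-1/2<\Re(s)<0$ one gets $(w,s,-s-w-u+k)\in D$ with middle coordinate $s$ in the strip required by Lemma~\ref{Secondfunctionalequationa}, $(s+w-1/2,1-s,k-w-u-1/2)\in C$, and $(1-s,s+w-1/2,u)\in WC$, so each of the three identities you invoke is applied inside its region of validity; and $\phi(s)\xi(2s)\phi(w)\xi(2w)$ is indeed finite and nonzero there, so cancelling those factors is legitimate (the paper performs the same division at the end of its own proof). The route differs from the paper's in one real respect: the paper does not cite Lemma~\ref{Secondfunctionalequationa} as a black box, but re-runs that lemma's computation at the shifted point, showing that both sides of (\ref{bequation}), multiplied by $\phi(s)\phi(w)\xi(2w)$, equal the common series (\ref{bintermediateequation}), via the representation (\ref{representationD}) on $D$, the Selberg Eisenstein functional equation $\xi(2s)E(T|s,w,u+2-k)=\xi(2-2s)E(T|1-s,s+w-1/2,u+2-k)$, and the symmetry $\Gamma_3(-s,-w,-u+k)=\Gamma_3(s-1,-s-w+1/2,-u+k)$. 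Your composition argument is more economical---it reuses the already-proved second functional equation instead of repeating its Eisenstein-series manipulation---and it makes explicit the conjugation relation $b=waw^{-1}$, which the paper records only later, in the proof of Proposition~\ref{group}; what the paper's version buys is an explicit common series representation of both sides. One small remark on your last step: the exponent $(s+w-1/2)+2(1-s)+3(k-w-u-1/2)=-s-2w-3u+3k$, and hence the appeal to $e^{3k\pi i}=1$, arises only if you apply the functional equation at the $C$ point; reading (\ref{def123}) literally at the $WC$ point $(1-s,s+w-1/2,u)$ gives the exponent $(1-s)+2(s+w-1/2)+3u=s+2w+3u$, so the phases cancel exactly with no appeal to parity---though since $k$ is even throughout the paper, your bookkeeping is also valid.
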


\begin{proof} [Proof]
This proof is similar to the one of Lemma~\ref{Secondfunctionalequationa}, so we omit some details. We observe that $\Gamma_3(-s,-w,-u+k)=\Gamma_3(s-1,-s-w+1/2,-u+k)$ and $(s,w,u)\in WD$ iff $(w,s,-s-w-u+k) \in D$. The functional equation in Subsection \ref{defSelberg} implies
\vspace{-0.05cm}
\begin{equation*}
\xi(2s) E(T| s,w,u+2-k)=\xi(2-2s)E(T| 1-s,s+w-1/2,u+2-k).
\vspace{-0.05cm}
\end{equation*}
Using the previous remarks, (\ref{estrella}), (\ref{representationD}) and the hypothesis, we conclude that the left-hand side of (\ref{bequation}) times $\phi(s) \phi(w) \xi(2w)$ can be represented by
\vspace{-0.02cm}
\begin{equation}\label{bintermediateequation}
\begin{aligned}
\phi(s) \xi(2-2s) \phi(w)&\xi(2w) e^{-(s+2w+3u)\pi i} \frac{2(2\pi)^{-(s+2w+3u-3k)}}{\Gamma_3(s-1,-s-w+1/2,-u+k)}\\[-0.3em]
\times& \sum_{T\in \mathcal{J}/GL_3(\mathbb{Z})} \frac{1}{\varepsilon_T}E(T| 1-s,s+w-1/2,u+2-k)P_{k,T}(Z).
\end{aligned}
\end{equation}

\vspace{-0.15cm}On the other hand, we observe that the hypothesis on $(s,w,u)$ implies $(1-s,s+w-1/2,u)\in WC$. Then, using (\ref{def123}) and (\ref{repreC}) one can see that the right-hand side of (\ref{bequation}) times $\phi(1-s) \phi(w) \xi(2w)$ can be represented by (\ref{bintermediateequation}) as well. This proves the identity (\ref{bequation}) multiplied by $\phi(s) \phi(w) \xi(2w)$. Now we divide by these factors because they have no zeros in the region given in the hypothesis.
\end{proof}

\begin{proof} [Proof\nopunct] {\it of Proposition~\ref{group}.}
Let us consider the following maps from $\mathbb{C}^3$ to $\mathbb{C}^3$
\vspace{-0.14cm}
\begin{equation*}
\begin{array}{r@{}l}
w&:(s,w,u)\mapsto(w,s,-s-w-u+k) \\
a&:(s,w,u)\mapsto(s+w-1/2,1-w,w+u-1/2) \\
aba&:(s,w,u)\mapsto(1-w,1-s,s+w+u-1) \\
b&:(s,w,u)\mapsto(1-s,s+w-1/2,u).
\end{array}
\vspace{-0.12cm}
\end{equation*}
They appear in the functional equations of Theorem~\ref{theoremanalyticcontinuation} and Lemma~\ref{fourtheq} respectively, and belong to the group of bijections from $\mathbb{C}^3$ to $\mathbb{C}^3$ whose operation is the composition of functions. Every composition of these functions yield a functional equation for ${\Omega}_{k,s,w,u}(Z)$. For example, the composition of $a$ and $w$ is the map $aw:(s,w,u)\mapsto(s+w-1/2,1-s,-w-u+k-1/2)$, which corresponds to the functional equation
\vspace{-0.05cm}
\begin{equation*}
e^{(s+2w+3u)\pi i} \, \xi(2s) {\Omega}_{k,s,w,u}(Z)=\xi(2-2s) {\Omega}_{k,s+w-1/2,1-s,-w-u+k-1/2}(Z);\\
\vspace{-0.05cm}
\end{equation*}
this is obtained using the first functional equation of Theorem~\ref{theoremanalyticcontinuation} named here $w$, and then the second one named $a$.

Let us define $G=\langle a,aba, w \rangle$ as the group generated by the functional equations of Theorem~\ref{theoremanalyticcontinuation} under composition. If $aw \in G$ is raised to the sixth power one gets the identity function; in fact, the order of $aw$ is six. In the same way one checks that the order of $a,b,aba \text{ and }w$ is two, then $b=a\,aba\,a \in G.$ It is not hard to see that $waw^{-1}=b$ and $wbw^{-1}=a$, so $b\,aw\,b^{-1}=bw=(aw)^{-1}.$ Hence $\widetilde{G}:=\langle aw,b/\,(aw)^6,b^2,bawb^{-1}aw \rangle$ is a subgroup of $G$ isomorphic to $D_{12}.$ On the other hand, $a=(aw)^2b$ and $w=a\hspace{.08cm}aw$; therefore $a,aba,w \in \widetilde{G}$ and so $G=\widetilde{G}\cong D_{12}.$ 
\end{proof}

\begin{remark}
Since the group $G$ in the proof of Proposition~\ref{group} can be presented in different forms, we can establish the corresponding functional equations of ${\Omega}_{k,s,w,u}(Z)$ in different ways. For example, we exhibit two
\vspace{-0.05cm}
\begin{equation*}
\begin{aligned}
e^{(s+2w+3u)\pi i}{\Omega}_{k,s,w,u}(Z)=&\;{\Omega}_{k,w,s,-s-w-u+k}(Z)\\
\xi(2s) {\Omega}_{k,s,w,u}(Z)=&\; \xi(2-2s){\Omega}_{k,1-s,s+w-1/2,u}(Z)\\
\xi(2w){\Omega}_{k,s,w,u}(Z) =&\; \xi(2-2w) {\Omega}_{k,s+w-1/2,1-w,w+u-1/2}(Z)
\end{aligned}
\vspace{-0.1cm}
\end{equation*}
\begin{equation*}
\begin{aligned}
e^{(s+2w+3u)\pi i} \xi(2s){\Omega}_{k,s,w,u}(Z)=&\; \xi(2-2s) {\Omega}_{k,s+w-1/2,1-s,-w-u+k-1/2}(Z)\\
\xi(2s) {\Omega}_{k,s,w,u}(Z) =&\; \xi(2-2s) {\Omega}_{k,1-s,s+w-1/2,u}(Z).
\end{aligned}
\vspace{0.2cm}
\end{equation*}
The first presentation of $G$ is $\langle w,b,a \rangle$ and the second one $\widetilde{G}=\langle aw, b \rangle$.
\end{remark}

\section{Several variables Dirichlet series associated with Siegel cusp forms}\label{SectionKM}

Throughout this section $f$ denotes a Siegel cusp form in $\mathfrak{S}_{3,k}$ with Fourier series representation (\ref{fourier}). The Koecher-Maass series associated with $f$ is (\ref{KSdirichlet}) and its twist by Selberg's Eisenstein series $E$ is
\begin{equation*}\label{detT-l}
\sum_{T \in \mathcal{J}/SL_3(\mathbb{Z})} \frac{1}{\varepsilon_T} E(T|s,w,u) \frac{A_T}{(\det T)^l},
\vspace{-0.05cm}
\end{equation*}
where $s,w,u \text{ and } l$ are complex numbers lying in some set where the series converges. By (\ref{detE}), this series is the following Dirichlet series in three complex variables.

\vspace{0.2cm}
\begin{definition}\label{defidirichlet}
The Koecher-Maass series twisted by the Selberg’s Eisenstein series is
\vspace{-0.05cm}
\begin{equation*}
D_f(s,w,u)=\sum_{T \in \mathcal{J}/SL_3(\mathbb{Z})} \frac{A_T}{\varepsilon_T}E(T|s,w,u).
\vspace{-0.1cm}
\end{equation*}
\end{definition}
This is not just a formal expression, it has a convergence region that we determine after a technical lemma.

\vspace{0.2cm}
\begin{lemma}\label{serieE} The series
\vspace{0.2cm}
\begin{equation*} 
\sum_{T\in \mathcal{J}/GL_3(\mathbb{Z})} E(T|s,w,u)
\vspace{0.15cm}
\end{equation*}
is absolutely uniformly convergent in compact subsets of $\{(s,w,u) \in \mathbb{C}^3 /  \Re(s)>1, \Re(w)>1, \Re(u)>1\}$
and equal to $\sum_{T\in \mathcal{J}/SL_3(\mathbb{Z})} E(T|s,w,u).$
\end{lemma}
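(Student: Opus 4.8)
The plan is to handle the two assertions separately. The convergence is the main body of work and is a direct adaptation of the first part of the proof of Lemma~\ref{sum} with the Poincaré factor removed; the identity between the two sums is then immediate from the fact that $n=3$ is odd.

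For the convergence, I would fix a compact set $K$ inside $\{\Re(s)>1,\Re(w)>1,\Re(u)>1\}$. Since $E(T\mid s,w,u)$ is invariant under $GL_3(\mathbb{Z})$, each orbit may be represented by a $T$ in Minkowski's reduced domain $\mathcal{R}$, so it suffices to sum over $\mathcal{R}\cap\mathcal{J}$. The same bound from \cite[p.~51]{Mar3} invoked in Lemma~\ref{sum}, now with third argument $u$ in place of $u-2+k/2$, gives for $T\in\mathcal{R}$ (using $\Re(s)>1,\Re(w)>1$)
\[
\bigl|E(T\mid s,w,u)\bigr|\leq C\,E\bigl(I_3\mid\Re(s),\Re(w),\Re(u)\bigr)\,(\det T_1)^{-\Re(s)}(\det T_2)^{-\Re(w)}(\det T)^{-\Re(u)},
\]
and $E(I_3\mid\Re(s),\Re(w),\Re(u))$ is bounded on $K$ (Subsection~\ref{defSelberg}). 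I would then reuse the Minkowski combinatorics of Lemma~\ref{sum}: for $T\in\mathcal{R}\cap\mathcal{J}$ as in (\ref{notation}) one has $1\leq t_1\leq t_2\leq t_3$ with the off-diagonal entries bounded in terms of $t_1,t_2$, so there are at most $12t_1^2t_2$ such matrices with prescribed diagonal, while $\det T_1=t_1$, $\det T_2\geq c_2t_1t_2$ and $\det T\geq c_3t_1t_2t_3$ for some $c_2,c_3>0$. Substituting and grouping the terms by their diagonal bounds the series by a constant (depending continuously on $(s,w,u)\in K$) times
\[
\sum_{t_1\leq t_2\leq t_3}t_1^{-\Re(s+w+u)+2}\,t_2^{-\Re(w+u)+1}\,t_3^{-\Re(u)}\leq\zeta\bigl(\Re(s+w+u)-2\bigr)\,\zeta\bigl(\Re(w+u)-1\bigr)\,\zeta\bigl(\Re(u)\bigr),
\]
which converges exactly when $\Re(s+w+u)>3$, $\Re(w+u)>2$ and $\Re(u)>1$; all three are forced by $\Re(s)>1$, $\Re(w)>1$, $\Re(u)>1$. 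Uniformity over $K$ then follows from the Weierstrass $M$-test, since every constant above depends continuously on $(s,w,u)$. No genuine obstacle arises here; this is simply the most laborious step.

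For the equality, the key point is that $-I_3\in GL_3(\mathbb{Z})$ satisfies $\det(-I_3)=(-1)^3=-1$ and acts trivially, $T[-I_3]=T$ for every $T\in\mathcal{J}$. Hence, given any $U\in GL_3(\mathbb{Z})$ with $\det U=-1$, the matrix $V=-U$ lies in $SL_3(\mathbb{Z})$ and $T[U]={}^t UTU={}^t VTV=T[V]$. Therefore the $GL_3(\mathbb{Z})$-orbit and the $SL_3(\mathbb{Z})$-orbit of any $T\in\mathcal{J}$ coincide, so $\mathcal{J}/GL_3(\mathbb{Z})$ and $\mathcal{J}/SL_3(\mathbb{Z})$ are the same set of orbits, and the two series are identical term by term. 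This completes the argument.
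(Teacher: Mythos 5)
Your proposal is correct and takes essentially the same approach as the paper: the paper proves convergence by citing the estimate already established inside the proof of Lemma~\ref{sum} (Minkowski reduction, the bound from \cite[p.~51]{Mar3}, the $12t_1^2t_2$ diagonal count, and the triple zeta majorant) and then substituting $u \mapsto u+2-k/2$, whereas you simply rerun that identical computation with third argument $u$ directly, arriving at the same bound $\zeta(\Re(s+w+u)-2)\,\zeta(\Re(w+u)-1)\,\zeta(\Re(u))$. Your explicit argument with $-I_3$ (odd dimension, trivial action) is precisely the content of the paper's remark that $\mathcal{J}/GL_3(\mathbb{Z})=\mathcal{J}/SL_3(\mathbb{Z})$ is ``easy to see.''
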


\begin{proof} [Proof]
Let $K$ be a compact subset of the set of the hypothesis. From the proof of Lemma~\ref{sum} one can obtain the inequality
\vspace{-0.05cm}
\begin{equation*}
\begin{array}{r@{}l}
\displaystyle\sum_{T\in \mathcal{J}/GL_3(\mathbb{Z})} \abs[\big]{E(T|s,w,u-2+k/2)}\leq & \, C_{K,k} \,\zeta(\Re(s+w+u)+k/2-4) \\[-0.4em]
&\;\;\;\; \times \zeta(\Re(w+u)+k/2-3) \, \zeta(\Re(u)+k/2-2)
\end{array} 
\end{equation*}
whenever $\Re(s)>1, \Re(w)>1$ and $\Re(u)>3-k/2$, where $C_{K,k}$ is a positive constant which only depends on the compact set $K$ and the weight $k$. Hence, changing $u$ by $u+2-k/2$ one obtains the first statement of this lemma.

It is easy to see $\mathcal{J}/GL_3(\mathbb{Z})=\mathcal{J}/SL_3(\mathbb{Z})$, so the series in the lemma are equal.
\end{proof}

We prove that a convergence set for our twisted Dirichlet series is
\vspace{-0.1cm}
\begin{equation*}
    I=\{(s,w,u)\in \mathbb{C}^3 / \Re(s)>1, \Re(w)>1, \Re(u)>k/2+1 \}.
\vspace{-0.1cm}
\end{equation*}
\begin{lemma}\label{Dirichletconverges}
The twisted Koecher-Maass series of Definition~\ref{defidirichlet} is absolutely convergent and defines a holomorphic function on $I$, for any $f \in \mathfrak{S}_{3,k}.$
\end{lemma}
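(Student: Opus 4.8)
The plan is to dominate $D_f(s,w,u)$ termwise by a series already controlled in Lemma~\ref{serieE}. The starting point is the Hecke bound for the Fourier coefficients of a cusp form: since $f \in \mathfrak{S}_{3,k}$, the function $(\det Y)^{k/2}\abs{f(Z)}$ is $Sp_3(\mathbb{Z})$-invariant and decays in the cusps, hence is bounded on $\mathcal{H}_3$; inserting this into the integral representation of $A_T$ and optimizing over $Y \in \mathcal{P}_3$ (taking $Y$ proportional to $T^{-1}$, so that $\tr(TY)$ is fixed while $(\det Y)^{-k/2}=(\det T)^{k/2}$) yields a constant $C_f>0$ depending only on $f$ with $\abs{A_T} \leq C_f (\det T)^{k/2}$ for every $T \in \mathcal{J}$ (see \cite{Kli}).

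With this in hand I would estimate, for $(s,w,u) \in I$,
\[
\sum_{T \in \mathcal{J}/SL_3(\mathbb{Z})} \frac{\abs{A_T}}{\varepsilon_T}\,\abs{E(T|s,w,u)} \leq \sum_{T \in \mathcal{J}/SL_3(\mathbb{Z})} \abs{A_T}\,\abs{E(T|s,w,u)},
\]
using $\varepsilon_T \geq 1$. Both $\abs{A_T}$ (by $A_T=A_{T[U]}$) and $\abs{E(T|s,w,u)}$ (by $E(Y[U]|s,w,u)=E(Y|s,w,u)$) are constant on $GL_3(\mathbb{Z})$-orbits, and $\mathcal{J}/SL_3(\mathbb{Z})=\mathcal{J}/GL_3(\mathbb{Z})$ (see the proof of Lemma~\ref{serieE}); hence the last sum equals the corresponding sum over $\mathcal{J}/GL_3(\mathbb{Z})$. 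Applying the Hecke bound and then property (\ref{detE}) with $q=k/2$, together with $\det T>0$ and $\abs{e^{k\pi i/4}}=1$, converts the determinant weight into a shift of the third parameter:
\[
(\det T)^{k/2}\,\abs{E(T|s,w,u)} = \abs{E(T|s,w,u-k/2)}.
\]
Thus the series is bounded above by $C_f \sum_{T \in \mathcal{J}/GL_3(\mathbb{Z})} \abs{E(T|s,w,u-k/2)}$.

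Finally I would invoke Lemma~\ref{serieE}: the map $(s,w,u)\mapsto(s,w,u-k/2)$ carries any compact subset of $I$ into a compact subset of $\{\Re(s)>1,\Re(w)>1,\Re(u)>1\}$, so this dominating series converges absolutely and uniformly on compact subsets of $I$. This establishes absolute convergence of $D_f$ on $I$, locally uniformly. Since each summand $E(T|s,w,u)$ is holomorphic on $\{\Re(s)>1,\Re(w)>1\}\supseteq I$ (remark after (\ref{def eis})) and the factor $\varepsilon_T^{-1}A_T$ is constant, a locally uniform limit of holomorphic functions is holomorphic; hence $D_f$ defines a holomorphic function on $I$.

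The only genuinely non-formal ingredient is the Hecke bound, and even that is classical; everything else is bookkeeping that reduces the twisted series to the already-proven Lemma~\ref{serieE}. The step I expect to require the most care is the parameter shift via (\ref{detE}), since it is precisely what matches the convergence threshold $\Re(u)>1$ of Lemma~\ref{serieE} to the defining inequality $\Re(u)>k/2+1$ of the region $I$.
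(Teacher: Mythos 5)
Your proof is correct and follows essentially the same route as the paper's: Hecke's estimate $A_T=\mathcal{O}\bigl((\det T)^{k/2}\bigr)$, the shift of the third parameter via (\ref{detE}) with $q=k/2$, domination by the series of Lemma~\ref{serieE} (whose region $\Re(u)>1$ becomes $\Re(u)>k/2+1$ after the shift), and holomorphy from locally uniform convergence of the holomorphic terms $E(T|s,w,u)$. The extra details you spell out (the sketch of Hecke's bound, $\varepsilon_T\geq 1$, and the identification $\mathcal{J}/SL_3(\mathbb{Z})=\mathcal{J}/GL_3(\mathbb{Z})$) are points the paper either cites or uses implicitly, so there is no substantive difference.
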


\begin{proof} [Proof]
Hecke's estimate establishes that the Fourier coefficients of any $f$ in $\mathfrak{S}_{3,k}$ satisfy $A_T=\mathcal{O} ((\det T)^{k/2})$, where the constant depends on $f$ (see \cite[p.~149]{Koh}). Furthermore, (\ref{detE}) imply that the series in Definition~\ref{defidirichlet} satisfies
\vspace{-0.1cm}
\begin{equation*}
\sum_{T \in \mathcal{J}/SL_3(\mathbb{Z})} \abs[\Big]{\frac{A_T}{\varepsilon_T}E(T|s,w,u)} \; \leq \;C_f \sum_{T \in \mathcal{J}/SL_3(\mathbb{Z})} \abs[\big]{E(T|s,w,u-k/2)}
\vspace{-0.05cm}
\end{equation*}
for some $C_f>0$. Since for any $T \in \mathcal{J}$ the Selberg's Eisenstein series $E(T|s,w,u)$ is a holomorphic function whenever $\Re(s)>1, \Re(w)>1$ (see Subsection \ref{defSelberg}), and applying Lemma~\ref{serieE} to the previous series, we obtain the lemma.
\end{proof}

Before going to the proof of Theorem~\ref{teorema2}, we prove a technical lemma and then bound the Poincaré series $P_{k,T}$ over Siegel's fundamental domain $\mathcal{F}$.

\vspace{0.2cm}
\begin{lemma}\label{A_T}
There exists a constant $C>0$ such that
\begin{equation*}
\abs[\big]{A_T} \, \leq \, C \, \sup_{Z \in \mathcal{F}}\abs[\big]{f(Z)} \, (\det T)^k
\vspace{-0.2cm}
\end{equation*}
for all cusp form $f$ in $\mathfrak{S}_{3,k}$.
\end{lemma}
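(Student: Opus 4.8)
The plan is to recover $A_T$ by Fourier inversion and then control the integrand uniformly in $f$ by combining the $Sp_3(\mathbb{Z})$-invariance of $f$ with a reduction-theoretic bound. Since $f(X+iY)$ is invariant under $X\mapsto X+B$ for every symmetric integral matrix $B\in\mathscr{L}^*$, for any fixed $Y\in\mathcal{P}_3$ the orthogonality of the characters $X\mapsto e(TX)$ gives
\[
A_T=\frac{e^{2\pi\tr(TY)}}{v}\int_{\mathcal{S}/\mathscr{L}^*}f(X+iY)\,e(-TX)\,dX,\qquad v:=\operatorname{vol}(\mathcal{S}/\mathscr{L}^*).
\]
Because $A_{T[U]}=A_T$ and $\det(T[U])=\det T$ for all $U\in GL_3(\mathbb{Z})$, I may assume $T$ is Minkowski reduced, and I will take $Y=T^{-1}$, so that $\tr(TY)=3$, $e^{2\pi\tr(TY)}=e^{6\pi}$ and $\det Y=(\det T)^{-1}$. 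This already yields $\abs{A_T}\le e^{6\pi}\sup_{X}\abs{f(X+iT^{-1})}$, and the whole difficulty is transferred to bounding the right-hand side.

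To bound the integrand pointwise I use modularity. For each $X$ choose $M_X\in Sp_3(\mathbb{Z})$ with $Z^*:=M_X(X+iY)\in\mathcal{F}$ and set $Y^*=\Im Z^*$. From (\ref{j}) and $f|_k[M_X]=f$ one gets $\abs{f(X+iY)}=\abs{j(M_X,X+iY)}^{-k}\abs{f(Z^*)}$, and since $\abs{j(M,Z)}^{-2}=\det\Im(MZ)/\det Y$ this becomes
\[
\abs{f(X+iY)}=\Big(\tfrac{\det Y^*}{\det Y}\Big)^{k/2}\abs{f(Z^*)}\le\Big(\tfrac{\det Y^*}{\det Y}\Big)^{k/2}\sup_{\mathcal{F}}\abs{f}.
\]
As $\mathcal{F}$ is cut out by the inequalities $\abs{\det(CZ+D)}\ge 1$, the reduced point maximizes the determinant of the imaginary part in its orbit, so $\det Y^*=\max_{\{C,D\}}\det Y\,\abs{\det(CZ+D)}^{-2}$, the maximum running over coprime symmetric pairs.

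The heart of the proof is then the uniform estimate $\det Y^*\le C'\det T$ for $Z=X+iT^{-1}$, with $C'$ depending only on the degree. I will derive it from the elementary inequality $\abs{\det(X'+iY)}\ge\det Y$, valid for any real symmetric $X'$ and any $Y\in\mathcal{P}_3$: diagonalizing the symmetric matrix $Y^{-1/2}X'Y^{-1/2}$ with real eigenvalues $\mu_j$ gives $\abs{\det(X'+iY)}=\det Y\prod_j\sqrt{\mu_j^2+1}\ge\det Y$. When the lower block $C$ is invertible, writing $CZ+D=C\bigl(X+C^{-1}D+iY\bigr)$ with $X+C^{-1}D$ symmetric yields $\abs{\det(CZ+D)}\ge\abs{\det C}\det Y\ge\det Y$, hence $\det\Im(MZ)\le(\det Y)^{-1}=\det T$; when $C=0$ one has $\abs{\det(CZ+D)}=\abs{\det D}\ge 1$, so $\det\Im(MZ)\le\det Y=(\det T)^{-1}\le 64\,\det T$, using that $\det(2T)\ge 1$ forces $\det T\ge 1/8$. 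Combining these with $\det Y^*\le C'\det T$ gives $\bigl(\det Y^*/\det Y\bigr)^{k/2}\le (C')^{k/2}(\det T)^{k}$, and therefore $\abs{A_T}\le e^{6\pi}(C')^{k/2}\bigl(\sup_{\mathcal{F}}\abs{f}\bigr)(\det T)^{k}$, with $C=e^{6\pi}(C')^{k/2}$ independent of $f$ and $T$; note this is exactly why the exponent $k$ (rather than the sharper $k/2$) appears. The main obstacle is the reduction bound for the rank-deficient values of $C$: one brings the rows of $C$ into echelon form by a left unimodular transformation (which leaves $\abs{\det(CZ+D)}$ unchanged) and applies the same inequality $\abs{\det(X'+iY)}\ge\det Y$ to the nondegenerate block while controlling the complementary integral block by $\det(2T)\ge 1$; this is the only step requiring genuine care, the rest being the standard inversion-and-modularity estimate.
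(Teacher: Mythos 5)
Your proof shares the paper's skeleton --- recover $A_T$ by Fourier inversion at $Y=T^{-1}$ (producing the harmless factor $e^{6\pi}$) and then bound $\sup_X\abs{f(X+iT^{-1})}$ by $\sup_{\mathcal{F}}\abs{f}$ times a power of $\det T$ --- but you obtain the pointwise bound by a genuinely different route. The paper simply quotes, from the proof of Lemma~1 in \cite[p.~143]{Kli}, the estimate $\abs{f(Z)}\le D\,(1+\tr(Y))^{3k}(\det Y)^{-k}\abs{f(Z_1)}$ with $Z_1\in\mathcal{F}$ a representative of $Z$ and $D$ independent of $f$ and $Z$, and then removes the leftover factor $(1+\tr(T^{-1}))^{3k}$ by Minkowski-reducing $T$ so that $\tr(T^{-1})\le 3\gamma$. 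You instead re-derive the pointwise bound from the height-maximality of $\mathcal{F}$ (that $\det Y^*=\max_{M}\det\Im(MZ)$) together with explicit lower bounds on $\abs{\det(CZ+D)}$ at $Y=T^{-1}$. The parts you carry out are correct: the inversion formula, the identity $\det\Im(MZ)=\det Y\,\abs{\det(CZ+D)}^{-2}$, the eigenvalue proof of $\abs{\det(X'+iY)}\ge\det Y$, the case $\det C\ne 0$ (using that $C^{-1}D$ is symmetric and $\abs{\det C}\ge 1$), and the case $C=0$ (using $\det T\ge 1/8$); moreover the key claim $\det Y^*\le C'\det T$ is true, and your route, once completed, is self-contained and does not even require $T$ to be reduced.

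However, the rank-deficient case $0<\operatorname{rank}(C)<3$ --- which you correctly identify as the heart of the matter --- is not actually proved, and the mechanism you sketch does not suffice as stated. Left multiplication by a unimodular $U$ yields $U(CZ+D)=\bigl(\begin{smallmatrix}C'Z+D'\\ D''\end{smallmatrix}\bigr)$, which is not block triangular, so there is no ``nondegenerate block'' to which your determinant inequality can be applied directly; one needs the two-sided normal form for coprime symmetric pairs of rank $r$ (available in \cite{Kli}, in the chapter on Eisenstein series, where it is used to compute the Siegel operator on Eisenstein series): there exist $V\in GL_3(\mathbb{Z})$ and a coprime symmetric pair $(C_1,D_1)$ of degree $r$ with $\det C_1\neq 0$ such that $\det(CZ+D)=\pm\det\bigl(C_1(Z[V])_r+D_1\bigr)$, where $(Z[V])_r$ is the leading $r\times r$ block of $Z[V]$. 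Granting this, your inequality gives $\abs{\det(CZ+D)}\ge\det\bigl(({}^tVT^{-1}V)_r\bigr)$, and the remaining arithmetic input is the bound $\det\bigl(({}^tVT^{-1}V)_r\bigr)\ge c/\det T$. This is \emph{not} a consequence of $\det(2T)\ge 1$, as you assert; it follows from Jacobi's complementary-minor identity: ${}^tVT^{-1}V=\bigl(T[{}^tV^{-1}]\bigr)^{-1}$, and the leading $r\times r$ minor of $(T')^{-1}$, with $T'=T[{}^tV^{-1}]$, equals $\det(T'')/\det T'$, where $T''$ is the complementary $(3-r)\times(3-r)$ principal block of $T'$ and $\det T'=\det T$; since $T'$ is again half-integral, $\det(T'')\ge 1/4$ by integrality, which gives the bound with $c=1/4$. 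With the normal form and this identity inserted, your argument closes; as written, the crucial estimate $\det Y^*\le C'\det T$ is established only when $C$ is invertible or zero.
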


\begin{proof} [Proof]
Lemma~1 in \cite[p.~143]{Kli} states that the Fourier coefficients of any $f$ in $\mathfrak{S}_{3,k}$ satisfy $\abs{A_T}\leq C\, (\det T)^k$, with a constant $C$ which is independent of $T$ but depends on $f$. In the middle of that proof the author proves the inequality
\vspace{-0.05cm}
\begin{equation*}
\abs[\big]{f(Z)} \, \leq \, D \, (1+\tr(Y))^{3k} \, (\det Y)^{-k} \, \abs[\big]{f(Z_1)},
\vspace{-0.05cm}
\end{equation*}
where $Z=X+iY$ is an arbitrary element in $\mathcal{H}_3$, $Z_1$ is a representative of $Z$ in $\mathcal{F}$ and $D$ is a positive constant independent of $Z$ and $f$. Furthermore, recalling the existence of a $\delta>0$ such that $\mathcal{F} \subseteq V(\delta) \subseteq \{Z \in \mathcal{H}_3 / \ \Im(Z)\geq \delta I_3\}$, one knows by Koecher's theorem that every $f \in \mathfrak{S}_{3,k}$ is bounded on $\mathcal{F}$ (see \cite[p.~30,45]{Kli}); hence $\{\abs{f(Z_{1})} \, / \, Z_{1} \in \mathcal{F} \} $ is a not-empty, bounded subset of the real numbers. Thus $\sup_{Z_{1} \in \mathcal{F}}\abs{f(Z_{1})}$ exists and then one has
\vspace{0.1cm}
\begin{equation}\label{estimate}
\abs[\big]{f(Z)} \, \leq \, D \, (1+\tr(Y))^{3k} \, (\det Y)^{-k} \, \sup_{Z_1 \in \mathcal{F}} \abs[\big]{f(Z_1)}
\vspace{0.1cm}
\end{equation}
for all $Z \in \mathcal{H}_3$. On the other hand, we consider the integral formula for the Fourier coefficients of $f$
\vspace{0.1cm}
\begin{equation*}
    A_T=\int_{X\text{ mod }1}f(Z)\,e^{-2\pi i \tr(TZ)}\,dX
\vspace{0.25cm}
\end{equation*}
with $dX=dx_1\cdot...\cdot dx_6$ for $X =\Re{(Z)}$ as in (\ref{notation}) (see for instance \cite[p.~44]{Kli}). We apply the estimate (\ref{estimate}) to the integral formula with $Z=X+iT^{-1}$, and obtain
\vspace{-0.05cm}
\begin{equation}\label{coefficientAT}
\begin{aligned}
\abs[\big]{A_T} &\leq \int_{X\text{ mod }1}\abs[\big]{f(X+iT^{-1})\,e^{-2\pi i \tr(T(X+iT^{-1}))}}\,dX \\[0.2em]
&\leq D\, e^{6\pi}\, \sup_{Z_1 \in \mathcal{F}}\abs[\big]{f(Z_1)} \, (\det T)^k \,(1+\tr(T^{-1}))^{3k}.
\end{aligned}
\vspace{-0.05cm}
\end{equation}
We know $A_{T[U]}=(\det U)^k A_T$ for all $U \in GL_3(\mathbb{Z})$ (\cite[p.~45]{Kli}), so we may assume $T \in \mathcal{J}\cap \mathcal{R}.$ By Lemma~2 in \cite[p.~20]{Kli} there exists a positive number $\gamma$ such that
\vspace{-0.1cm}
\begin{equation*}
    0<T^D<\gamma \, T \,\,\;\;\;\;\\\\\\ \text{ for all } T \in \mathcal{J}\cap \mathcal{R},
\vspace{-0.15cm}
\end{equation*}
where $T^D$ is the diagonal matrix made up of the diagonal elements $t_1, t_2, t_3$ of the matrix $T$ as in (\ref{notation}). By Corollary $7.7.4.(a)$ and $(d)$ in \cite[p.~495]{Horn} one has
\begin{equation*}
\gamma^{-1}\,T^{-1}<(T^{D})^{-1} \text{ and then } \tr(T^{-1})<\gamma \tr((T^{D})^{-1})\leq \gamma (t_1^{-1}+t_2^{-1}+t_3^{-1}) \leq 3 \gamma.    
\vspace{-0.05cm}
\end{equation*}
Considering $\tr(T^{-1})\leq3 \, \gamma$ in (\ref{coefficientAT}), we obtain the claimed.
\end{proof}

For $k>6$ we consider next the function $P^k_3:\mathcal{H}_3 \times \mathcal{H}_3 \to \mathbb{C}$ defined by
\vspace{-0.02cm}
\begin{equation*}
P^k_3(Z,Z_2) \ = \sum_{M \in \{\pm I_6\} \backslash Sp_3(\mathbb{Z})} j(M,Z)^{-k} \det(MZ+Z_2)^{-k}.
\vspace{-0.05cm}
\end{equation*}
One knows that for any fixed $Z \in \mathcal{H}_3,$ the function $Z_2 \mapsto P^k_3(Z,Z_2)$ is a Siegel cusp form of weight $k$ over $Sp_3(\mathbb{Z})$ (see \cite[p.~78,79,90]{Kli}). Its Fourier series representation is
\begin{equation*}
P^k_3(Z,Z_2)=b_k^{-1} \sum_{T \in \mathcal{J}} (\det T)^{k-2} P_{k,T}(Z) \,e^{2\pi i \tr(T Z_2)},
\vspace{-0.1cm}
\end{equation*}
where $b_k=(4\pi)^{3/2}\,(-2\pi i)^{-3k}\,\Gamma(k)\,\Gamma(k-1/2) \, \Gamma(k-1)$ and $P_{k,T}$ the Poincaré series defined in Subsection \ref{cusp}. We consider an arbitrary $Z \in \mathcal{F}$ and apply Lemma~\ref{A_T} to the $T^{th}$-Fourier coefficient of this function, obtaining the existence of a positive constant $C$ (which does not depend on $Z$) such that
\begin{equation*}
\abs[\big]{P_{k,T}(Z)}\leq C \;\abs{b_k} (\det T)^2 \sup_{Z_2 \in \mathcal{F}}\abs[\big]{P^k_3(Z,Z_2)} 
\vspace{-0.15cm}
\end{equation*}
for all $Z \in \mathcal{F}$. On the other hand, the function $P^k_3(Z,Z_2) (\det \Im(Z) \, \det \Im(Z_2))^{k/2}$ is bounded on $\mathcal{F} \times \mathcal{F}$ (see \cite[p.~83]{Kli}); hence, there exists a positive constant $C_2$ (which is independent of $Z$) such that
\begin{equation*}
\abs[\big]{P_{k,T}(Z)}\leq C \;C_2\;\abs{b_k}\,(\det T)^2 \sup_{Z,Z_2 \in \mathcal{F}} (\det \Im(Z) \, \det \Im(Z_2))^{-k/2} .
\vspace{-0.08cm}
\end{equation*}
Let us consider $\delta>0$ such that $\mathcal{F}\subseteq V(\delta)$. For $Z \in \mathcal{F}$ we have $Y=\Im(Z)$ belongs to Minkowski's reduced domain and $Y-\delta I_3 \geq 0$, by definition of $V(\delta)$. Consequently, there exists a positive constant $C_3$ (which does not depend on $Y$) such that
\vspace{-0.03cm}
\begin{equation}\label{dety>delta}
    \det Y\geq C_3 \, y_1 \, y_2 \, y_3\geq C_3 \, \delta^3
\vspace{-0.05cm}
\end{equation}
with $Y$ as in (\ref{notation}) (see \cite[p.~13]{Kli}). Then $\sup_{Z,Z_2 \in \mathcal{F}}\ (\det \Im(Z) \, \det \Im(Z_2))^{-k/2}\leq C_3^{-k} \, \delta^{-3k}$ and we finally get
\vspace{-0.08cm}
\begin{equation}\label{poincareT}
\abs[\big]{P_{k,T}(Z)}\leq C_k (\det T)^2 \;\; \qquad \text{ for all } Z \in \mathcal{F}, T \in \mathcal{J}
\vspace{-0.05cm}
\end{equation}
where $C_k>0$ is a constant depending on $k$.

\begin{proof} [Proof\nopunct] {\it of Theorem~\ref{teorema2}.}
First, we prove that the series
\vspace{-0.13cm}
\begin{equation}\label{v}
\sum_{T \in \mathcal{J}/SL_3(\mathbb{Z})} \int_{Z \in \mathcal{F}} \abs[\Big]{\frac{1}{\varepsilon_T}E(T|w,s,-s-w-u+2) \, P_{k,T}(Z) \, \overline{f^*(Z)}} \, (\det Y)^k \, dV(Z)
\vspace{-0.2cm}
\end{equation}
converges on $C_*:=\{ (s,w,u) \in \mathbb{C}^3/ \Re(s)>1, \Re(w)>1, \Re(s+w+u)<-1\}.$ We initially have that (\ref{v}) is bounded by
\vspace{-0.1cm}
\begin{equation*}
\sum_{T \in \mathcal{J}/SL_3(\mathbb{Z})} \abs[\big]{E(T|w,s,-s-w-u+2)} \int_{Z \in \mathcal{F}} \abs[\Big]{P_{k,T}(Z) \, \overline{f^*(Z)}} \, (\det Y)^k \, dV(Z).
\vspace{-0.05cm}
\end{equation*}
Since $f^* \in  \mathfrak{S}_{3,k}$ and $\mathcal{F}\subseteq V(\delta)$ for some $\delta>0$, there exist $C,C_1>0$ such that $ \abs[\big]{f^*(Z)}\leq C_1e^{-C \tr(Y)}$ for all $Z \in \mathcal{F}$ (see for instance Proposition~3 in \cite[p.~56]{Kli}). This fact, (\ref{detE}) and (\ref{poincareT}) imply that a sufficient condition for (\ref{v}) to converge is
\vspace{-0.05cm}
\begin{equation*}
    \sum_{T \in \mathcal{J}/SL_3(\mathbb{Z})} \abs[\big]{E(T|w,s,-s-w-u)} \text{ and } \int_{Z \in \mathcal{F}} (\det Y)^k \, e^{-C \tr(Y)} dV(Z)
\end{equation*}
both converge. The series converges for $(s,w,u) \in C_*$ by Lemma~\ref{serieE}. Regarding the integral, we show that it is bounded by a value of our gamma function $\Gamma_3$. Looking at the definition of $\mathcal{F}$ (see \cite[p.~29]{Kli}), writing $Z=X+iY$ as in (\ref{notation}) and recalling that $dV(Z)(\det Y)^4=dXdY=dx_1 dy_1 \cdot ... \cdot dx_6 dy_6$, one has
\begin{equation*}
\begin{aligned}
\hfilneg \int_{Z \in \mathcal{F}} (\det Y)^k \, e^{-C \tr(Y)} dV(Z)& \leq \int_{Y \in \mathcal{R}}\int_{{}^t\hspace{-0.02cm}X=X \in [0,1]^{3,3}} (\det Y)^{k-4} \, e^{-C \tr(Y)} dXdY \hspace{10000pt minus 1fil}\\
& \hspace{3.5cm} \leq \int_{Y \in \mathcal{P}_3} (\det Y)^{k-4} \, e^{-C \tr(Y)} dY.
\end{aligned}
\end{equation*}
Then, considering the bijection $Y\mapsto C^{-1}Y$ onto $\mathcal{P}_3,$ $d(C^{-1}Y)=C^{-6} \, dY$ and since $(\det Y)^2\,d\mu(Y)=dY$ and $(\det Y)^{k-2}\,e^{3(k-2)\pi i /2}=p_{0,0,k-2}(iY)$, the last integral is equal to $C^{6-3k} e^{3(2-k)\pi i /2}\,\Gamma_3(0,0,k-2)$. Therefore the series (\ref{v}) converges on $C_*$. 

The kernel is represented on $C_*\times\mathcal{H}_3 \subseteq C\times \mathcal{H}_3$ by (\ref{repreC}). Hence
\begin{equation}\label{v2}
\begin{aligned}
&\frac{\Gamma_3(-w,-s,s+w+u)}{(2\pi)^{s+2w+3u}} \langle {\Omega}_{k,s,w,u}, f^* \rangle\\
=&\int_{Z \in \mathcal{F}} \sum_{T \in \mathcal{J}/SL_3(\mathbb{Z})} \frac{1}{\varepsilon_T} E(T|w,s,-s-w-u+2) P_{k,T}(Z) \overline{f^*(Z)}(\det Y)^k dV(Z).
\end{aligned}
\end{equation}
Notice that we have used here that $\mathcal{J}/SL_3(\mathbb{Z}) = \mathcal{J}/GL_3(\mathbb{Z})$.
Since the series (\ref{v}) converges on $C_*$, we can use a theorem of \cite[p.~29]{Rud} (which is a corollary of Lebesgue's dominated convergence theorem) and conclude that the right-hand side of (\ref{v2}) is a convergent integral on $C_*$, where it is allowed to interchange integration and summation. Therefore, the left-hand side of (\ref{v2}) is well-defined on $C_*$ and is equal to
\vspace{-0.05cm}
\begin{equation*}
\begin{aligned}
&\displaystyle\sum_{T \in \mathcal{J}/SL_3(\mathbb{Z})} \dfrac{1}{\varepsilon_T} E(T|w,s,-s-w-u+2) \displaystyle\int_{Z \in \mathcal{F}} P_{k,T}(Z) \overline{f^*(Z)} (\det Y)^k dV(Z) \\[-0.3em]
=& \displaystyle\sum_{T \in \mathcal{J}/SL_3(\mathbb{Z})} \dfrac{1}{\varepsilon_T} E(T|w,s,-s-w-u+2) \langle P_{k,T}, f^* \rangle.
\end{aligned}
\vspace{-0.13cm}
\end{equation*}
Using the identity $\langle P_{k,T}, f^* \rangle=D_k (\det T)^{2-k} A_T$, where $D_k=\pi^{3/2} (4\pi)^{6-3k} \Gamma(k-2) \Gamma(k-5/2) \Gamma(k-3)$ (which is equivalent to (\ref{p,f*})) and (\ref{detE}), one has
\vspace{-0.1cm}
\begin{equation*}
\begin{array}{r@{}l}
 \dfrac{\Gamma_3(-w,-s,s+w+u)}{(2\pi)^{s+2w+3u}} & \langle {\Omega}_{k,s,w,u} , f^* \rangle \\[-0.2em]
&\;\;\;\;\;=D_k e^{(2-k)\pi i/2} \displaystyle\sum_{T \in\mathcal{J}/SL_3(\mathbb{Z})} \frac{A_T}{\varepsilon_T}E(T|w,s,-s-w-u+k)
\end{array}
\vspace{-0.05cm}
\end{equation*}
for any $(s,w,u)\in C_*$. By Lemma~\ref{Dirichletconverges} the last series is $D_f(w,s,-s-w-u+k).$
Hence, we have obtained the identity
\begin{equation*}
\frac{\Gamma_3(-w,-s,s+w+u)}{(2\pi)^{s+2w+3u}}\, \langle{\Omega}_{k,s,w,u}, f^* \rangle =D_k \;e^{(2-k)\pi i/2}D_f(w,s,-s-w-u+k)
\end{equation*}
whenever $(s,w,u) \in C^*.$ Changing $(s,w,u)$ to $(w,s,-s-w-u+k)$
and applying the first functional equation of Theorem~\ref{theoremanalyticcontinuation} one obtains
\begin{equation*}\label{laimportante}
\frac{\Gamma_3(-s,-w,-u+k)\,e^{(s+2w+3u)\pi i}}{(2\pi)^{-(s+2w+3u)}}\,  \langle {\Omega}_{k,s,w,u}, f^* \rangle =D_k \;e^{(2-k)\pi i/2}(2\pi)^{3k}D_f(s,w,u)
\end{equation*}
whenever $\Re(s)>1, \Re(w)>1$ and $\Re(u)>k+1.$ This identity together with (\ref{Gamma3property}) yield the formula in the theorem.
\end{proof}

\section{Application}\label{Application}
We begin this section with a remark, which will be useful in the proof of Proposition ~\ref{application}. In fact, the remark holds for Siegel cusp forms of any degree.

\vspace{0.2cm}
\begin{remark}\label{remark5.1}
Let $f \in  \mathfrak{S}_{3,k}$ and $\varepsilon >0$, then $f(Z)(\det Y)^{\varepsilon}$ is bounded on $\mathcal{F}$.
\end{remark}

\begin{proof} [Proof]
Let $C_1,C_2>0$ such that $|f(Z)| (\det Y)^{k/2} \leq C_1e^{-C_2(\det Y)^{1/3}}$ for all $Z \in \mathcal{F}$ (see for instance \cite[p.~57]{Kli}). Then, there exist $C_3,C_4,C_5>0$ such that
\vspace{-0.05cm}
\begin{equation*}
|f(Z)| (\det Y)^{\varepsilon} \leq C_1e^{-C_2(\det Y)^{1/3}}(\det Y)^{\varepsilon-k/2} \leq C_3e^{-C_4(\det Y)^{1/3}} \leq C_3e^{-C_5\delta}
\vspace{-0.05cm}
\end{equation*}
for all $Z \in \mathcal{F}$. The last inequality is due to (\ref{dety>delta}).
\end{proof}

\begin{proof} [Proof\nopunct] {\it of Proposition~\ref{application}.}
The main idea of the proof is to use the identity of Theorem~\ref{teorema2} to extend the twisted Koecher-Maass series. We consider
\begin{equation}\label{defKMseries}
\frac{\phi(s)\xi(2s)\phi(w)\xi(2w)\phi(s+w-1/2)\xi(2s+2w-1)}{\Gamma(-s-w-u+k)\Gamma(-w-u+k-1/2) \Gamma(-u+k-1)}D_f(s,w,u),
\vspace{-0.05cm}
\end{equation}
which is holomorphic on $I$ by Lemma~\ref{Dirichletconverges}, and define it on $\mathbb{C}^3$ as
\begin{equation}\label{contofKMseries}
C_k^{-1}(2\pi i)^{s+2w+3u} \langle \phi(s)\xi(2s)\phi(w)\xi(2w)\phi(s+w-1/2)\xi(2s+2w-1){\Omega}_{k,s,w,u}, f^* \rangle.
\end{equation}
where $C_k$ is defined in Theorem~\ref{teorema2}.

We prove that (\ref{contofKMseries}) is entire in the variable $s$. Let $s_0$ a complex number, $B(s_0,\varepsilon)$ the open ball centered at $s_0$ and radius $\varepsilon$ and define $\psi: B(s_0,\varepsilon) \times \mathcal{F} \to \mathbb{C}$,
\vspace{-0.05cm}
\begin{equation*}
    \psi(s,Z)=\phi(s)\xi(2s)\phi(w)\xi(2w)\phi(s+w-1/2)\xi(2s+2w-1){\Omega}_{k,s,w,u}(Z)\overline{f^*(Z)}(\det Y)^k.
\vspace{-0.05cm}
\end{equation*}
Since the set $\mathcal{F}$ is a measurable space, $\psi(s,\cdot)$ is a measurable function of $Z$ for each $s \in B(s_0,\varepsilon)$, $\psi(\cdot,Z)$ is holomorphic on $B(s_0,\varepsilon)$ for each $Z \in \mathcal{F}$ and $\psi$ is bounded on $B(s_0,\varepsilon) \times \mathcal{F}$ by Theorem~\ref{theoremanalyticcontinuation} and Remark~\ref{remark5.1}, we can use \cite[p.~229]{Rud} (which is a corollary of Cauchy's integral formula and Lebesgue’s dominated convergence theorem) and conclude that (\ref{contofKMseries}) is holomorphic on $B(s_0,\varepsilon)$. Finally (\ref{contofKMseries}) is entire in the variable $s$ since $s_0$ is an arbitrary complex number.

The same proof works for $w$ and $u$, then (\ref{defKMseries}) has a holomorphic continuation to $\mathbb{C}^3.$ 

We now turn to the functional equations. From (\ref{contofKMseries}) and the first functional equation of Theorem~\ref{theoremanalyticcontinuation} it follows that
\begin{equation*}
\begin{aligned}
&\dfrac{C_k e^{(s+2w+3u)\pi i}\phi(s)\xi(2s)\phi(w)\xi(2w)\phi(s+w-1/2)\xi(2s+2w-1)}{(2\pi i)^{s+2w+3u} \Gamma(-s-w-u+k)\Gamma(-w-u+k-1/2)\Gamma(-u+k-1)}D_f(s,w,u)\\[0.4em]
=&\, \langle \phi(s)\xi(2s)\phi(w)\xi(2w)\phi(s+w-1/2)\xi(2s+2w-1){\Omega}_{k,w,s,-s-w-u+k},f^* \rangle\\[0.4em]
=&\, \dfrac{C_k\phi(s)\xi(2s)\phi(w)\xi(2w)\phi(s+w-1/2)\xi(2s+2w-1)}{(2\pi i)^{-s-2w-3u+3k}\Gamma(s+w+u-1)\Gamma(w+u-1/2)\Gamma(u)}D_f(w,s,-s-w-u+k).
\end{aligned}
\end{equation*}
This identity easily gives the first functional equation claimed. The proof for the other functional equations is similar.
\end{proof}

\bmhead{Acknowledgments} The research was supported by Subvención a la Instalación en la Academia 2022 n°85220128 and Beca de postdoctorado en el extranjero Becas Chile n°74220047.
\vspace{-0.1cm}
\bibliography{sn-bibliography}

\begin{thebibliography}{10}
\bibitem{Coh} Cohen,~H.: {\em Sur certaines sommes de séries liées aux périodes de formes modulaires}, C.R. Journées de Théorie Analytique et Élémentaire des Nombres, Publ. 2 Univ. de Limoges, Limoges (1981)
\vspace{-0.2cm}
\bibitem{Dia} Diamantis,~N.; O'Sullivan,~C.: {\em Kernels of $L$-functions of cusp forms}, Math. Ann. 346, 897-929 (2010)
\vspace{-0.2cm}
\bibitem{Gol} Goldfeld,~D.; Zhang, S.: {\em The holomorphic kernel of the Rankin-Selberg convolution}, Asian J. Math. 3(4), 729–747 (1999)
\vspace{-0.2cm}
\bibitem{Gro} Gross,~B.H; Zagier,~D.B.: {\em Heegner points and derivatives of L-series}, Invent. Math. 84(2), 225–320 (1986)
\vspace{-0.2cm}
\bibitem{Horn} Horn,~R.; Johnson,~C.: {\em Matrix analysis}, Cambridge University Press, second edition, 2013
\vspace{-0.2cm}
\bibitem{Hor} Hörmander,~L.: {\em An Introduction to Complex Analysis in Several Variables}, North Holland Publ. Co., Amsterdam (1990)
\vspace{-0.2cm}
\bibitem{Kat} Katsurada,~M.: {\em Complete asymptotic expansions associated with Epstein zeta-functions}, Ramanujan J 14, 249-275 (2004)
\vspace{-0.2cm}
\bibitem{Kli} Klingen,~H.: {\em Introductory Lectures on Siegel Modular Forms}, Cambridge Studies in Advanced Mathematics, 1990
\vspace{-0.2cm}
\bibitem{Koh} Kohnen,~W.; Sengupta,~J.: {\em On Koecher-Maass series of Siegel modular forms}, Mathematische Zeitschrift, 242, 2002, Pages 149–157
\vspace{-0.2cm}
\bibitem{Maa} Maass,~H.: {\em Siegel's modular forms and Dirichlet series. Lecture Notes in Mathematics}, vol. 216 (Springer, New York, 1971)
\vspace{-0.2cm}
\bibitem{Mar3} Martin,~Y.: {\em L-functions for Jacobi forms of arbitrary degree},  Abh. Math. Sem. Univ. Hamburg 68 (1998), 45-63
\vspace{-0.2cm}
\bibitem{Mar2} Martin,~Y.: {\em A study on twisted Koecher–Maass series of Siegel cusp forms via an integral kernel}, Ramanujan J 50, 685–715 (2018)
\vspace{-0.2cm}
\bibitem{Rud} Rudin,~W.: {\em Real and Complex Analysis},  3rd ed., McGraw-Hill, 1987
\vspace{-0.2cm}
\bibitem{Shi} Shimura,~G.: {\em The special values of the zeta functions associated with cusp forms}, Comm. Pure Appl. Math. 29(6), 783–804 (1976)
\vspace{-0.2cm}
\bibitem{Sie} Siegel,~C.: {\em \"Uber Die Analytische Theorie Der Quadratischen Formen}, Annals of Mathematics, 1935, pages 527-606
\vspace{-0.2cm}
\bibitem{Stu} Sturm,~J.: {\em Projections of $\mathbb{C}^{\infty}$ automorphic forms}, Bull. Amer. Math. Soc. (N.S.) 2(3), 435–439 (1980)
\vspace{-0.2cm}
\bibitem{Ter1} Terras,~A.: {\em Harmonic analysis on symmetric spaces-Euclidean space, the sphere, and the Poincaré upper half-plane}, Springer, Second edition, 2013
\vspace{-0.2cm}
\bibitem{Ter2} Terras,~A.: {\em Harmonic analysis on symmetric spaces-Higher rank spaces, positive definite matrix space and generalizations}, Springer, Second edition, 2016
\vspace{-0.2cm}
\end{thebibliography}

\end{document}